\newtheorem{theorem}{Theorem}
\newtheorem{lemma}[theorem]{Lemma}
\newtheorem{corollary}{Corollary}
\theoremstyle{definition}
\newtheorem{remark}{Remark}
\newcommand{\beq}{\begin{equation}}
\newcommand{\eeq}{\end{equation}}
\newcommand{\Real}{\mathop{\mathrm{Re}}}
\newcommand{\Imag}{\mathop{\mathrm{Im}}}
\newcommand{\rmd}{\mathrm {d}}
\newcommand{\rmi}{\mathrm {i}}
\newcommand{\rmp}{\mathrm {p}}
\newcommand{\rmt}{\mathrm {t}}
\newcommand{\rmR}{\mathrm {R}}
\newcommand{\rmI}{\mathrm {I}}
\newcommand{\N}{\mathbb{N}}
\newcommand{\R}{\mathbb{R}}
\newcommand{\C}{\mathbb{C}}
\newcommand{\cC}{\mathcal{C}}
\newcommand{\hc}{\widehat{c}}
\newcommand{\hf}{\widehat{f}}
\newcommand{\hm}{\widehat{m}}
\newcommand{\hn}{\widehat{n}}
\newcommand{\hM}{\widehat{M}}
\newcommand{\oc}{\mathfrak{c}}
\newcommand{\fc}{\mathfrak{c}}
\newcommand{\oM}{\mathfrak{M}}
\newcommand{\om}{\mathfrak{m}}
\newcommand{\z}{\zeta}
\newcommand{\sinc}{\mathop\mathrm{sinc}}
\newcommand{\ud}{\frac{1}{2}}
\newcommand{\sds}{\strut\displaystyle}
\def\staccrel#1#2{\mathrel{\mathop{#1}\limits_{#2}}}
\begin{document}

\title[Numerical recovery of poles]{Numerical recovery of location and residue of poles of meromorphic functions}

\author[E. De Micheli]{Enrico De Micheli}
\address{\sl IBF -- Consiglio Nazionale delle Ricerche \\ Via De Marini, 6 - 16149 Genova, Italy}
\email{enrico.demicheli@cnr.it}

\author[G. A. Viano]{Giovanni Alberto Viano}
\address{\sl Facolt\'a di Scienze Matematiche, Fisiche e Naturali -- Universit\`a di Genova \\
Via Dodecaneso, 33 - 16146 Genova, Italy}
\email{giovannialbertoviano@gmail.com}

\begin{abstract}
We present a method able to recover location and residue
of poles of functions meromorphic in a half--plane from samples of the function on the real positive 
semi--axis.
The function is assumed to satisfy appropriate asymptotic conditions including, in particular,
that required by Carlson's theorem. The peculiar features of the present procedure
are: (i) it does not make use of the approximation of meromorphic functions by rational
functions; (ii) it does not use the standard methods of regularization of ill--posed
problems. The data required for the determination of the pole parameters (i.e.,
location and residue) are the approximate values of the meromorphic function
on a finite set of equidistant points on the real positive semi--axis. We show that this method is
numerically stable by proving that the algorithm is convergent
as the number of data points tends to infinity and the noise on the input data goes to zero.
Moreover, we can also evaluate the degree of approximation of the estimates
of pole location and residue which we obtain from the knowledge
of a finite number of noisy samples.
\end{abstract}

\maketitle

\section{Introduction}
\label{se:introduction}

A classical problem of numerical complex analysis consists in recovering location
and residue of poles of meromorphic functions.
The classical approach to this problem is based on the approximation by rational 
functions and, in this framework, the Pad\'e approximants play a particularly significant 
role \cite{Baker,Montessus,Suetin,Walsh}.

In this paper we present a completely different method, whose origin
goes back to a much earlier paper written by one of the authors,
in collaboration with Tullio Regge, in connection with the interpolation problem in the
complex angular momentum plane \cite{Regge}. Work on the method continued in \cite{Albino}.
However, soon, we ran up against difficulties related with the ill--posedness
of the numerical analytic continuation. Now, after more than forty years of experience regarding
the regularization of ill--posed problems \cite{Engl,Tikhonov}, we can reconsider the method 
originated at that time, and present a regularized algorithm which is safe from the pathologies 
of ill--posedness.

First, we consider a function $f(z)$, analytic in the half--plane $\Real z>0$, and
satisfying appropriate asymptotic conditions (detailed below in the article) which,
in particular, include that required by Carlson's theorem \cite{Boas}.
First, we suppose that the data set consists of an infinite number of samples 
of the function $f(z)$, taken on a regular grid on the real positive semi--axis, and
moreover, $f(z)$ is assumed to be known exactly: i.e., the input data are noiseless. 
More precisely, denoting by $\{f_N\}_{N=0}^\infty$ ($f_N \doteq f(N+\frac{1}{2})$; $N\in\N$)
the set of input samples and assuming the series $\sum_{N=0}^\infty f_N$ to be absolutely convergent,
as a first result we find an interpolation formula for $f(x)$ ($x\doteq\Real z$)
along with a relation which allows every sample $f_N$ to be expressed in terms of all the other samples.

The analysis performed for the analytic functions is then generalized to the case of a
meromorphic function $f(z)$ with one first order pole in the half--plane $\Real z>0$.
Still assuming to work with a data set made of an infinite number of noiseless samples of $f(x)$,
and supposing, initially, that position and residue of the pole are known, we obtain
a generalization of the previous formula in which each datum $f_N$ can be reconstructed
from all the other samples and from the pole parameters (i.e., position and residue).
Stated in other words, this formula provides us with a set of consistency relations, 
which mutually constrain the values of all the samples and of the pole parameters.
It is exactly this overall consistency which is exploited in order to construct the algorithm for
recovering pole location and residue from the function samples taken on the real positive semi--axis.

The successive step is to consider as input a more realistic data set
$\{f_N^{(\varepsilon)}\}_{N=0}^{N_0}$ ($\varepsilon>0,N_0<\infty$),
made of a finite number of function samples $f_N^{(\varepsilon)}$ perturbed by noise.
The algorithm for pole recovery, defined previously for the case of an infinite set 
of noiseless input data, can be suitably adapted to this different situation. 
More precisely, we can prove that the limit for $N_0\to+\infty$ and 
$\varepsilon\to 0$ of appropriately defined estimates $z_\rmp^{(\varepsilon,N_0)}(n)$
($n\in\N$ fixed) of the pole location $z_\rmp$ tends to $z_\rmp$ when $n$ tends to infinity, i.e.:
${\sds\lim_{n\to+\infty}}\lim_{\substack{{N_0\rightarrow+\infty}\\{\varepsilon\rightarrow 0}}}
z_\rmp^{(\varepsilon,N_0)}(n)=z_\rmp$.
But, in practice, since $N_0$ is necessarily finite and $\varepsilon$ is non--null,
we are faced with the problem that the two limits in the above formula cannot be
interchanged. This delicate point will be discussed in detail
in Section \ref{se:finite}, where it is shown that a proper estimate $z_\rmp^{(\varepsilon,N_0)}$,
close to the true pole position $z_\rmp$, can be obtained for $\varepsilon$ sufficiently small
and $N_0$ sufficiently large.
In the same section we will also show how to evaluate the degree of approximation
to $z_\rmp$ by $z_\rmp^{(\varepsilon,N_0)}$. Analogous arguments (leading to similar results)
are also developed for the problem of finding a suitable estimate $R_\rmp^{(\varepsilon,N_0)}$
of the residue $R_\rmp$.

The paper is organized as follows.
In Section \ref{se:interpolation:analytic} we derive the interpolation formula
for functions analytic in $\Real z>0$, by taking a data set $\{f_N\}_{N=0}^{\infty}$ 
of noiseless samples. In Section \ref{se:interpolation:meromorphic} we obtain 
the interpolation formula for a meromorphic function in the half--plane
$\Real z>0$, which has one first order pole at $z=z_\rmp$ ($\Real z_\rmp>0$),
continuing to assume an input made of an infinite number of noiseless data.
In Section \ref{se:consistency+algorithm:infinite} the consistency relations
mentioned previously are derived, and the algorithm for recovering 
location and residue of the pole from an infinite set of noiseless function samples is presented.

In Section \ref{se:finite} the algorithm for recovering
the pole parameters with a finite number of noisy input samples is given.
In the same section we show how to evaluate the degree of approximation
of the estimate $z_\rmp^{(\varepsilon,N_0)}$ and $R_\rmp^{(\varepsilon,N_0)}$
to $z_\rmp$ and $R_\rmp$. Section \ref{se:numerical} is devoted
to the numerical examples, which illustrate the various steps of the algorithm.
Finally, some conclusions will be drawn, and possible extensions of the present
method will be outlined. In the Appendix we briefly recall some properties of the
Meixner--Pollaczek polynomials, which are extensively used in the paper.

\section{Interpolation formula for a class of functions analytic in the half--plane $\Real z>0$}
\label{se:interpolation:analytic}

Most of the results we present in this article rely on a celebrated theorem by Carlson, which
states the growth properties that a function of a specified class must enjoy in order to be 
determined by its values on a certain set of points. Some preliminary notions are necessary.
The entire function $f(z)$, $z=re^{\rmi\theta}$, is of exponential type (or, of order 1) if
\beq
\limsup_{r\to\infty}\frac{\log\log M(r)}{\log r} = 1,
\label{0.2}
\eeq
where $M(r)$ denotes the maximum modulus of $f(z)$ for $|z|=r$.
In order to specify the rate of growth of a function of exponential type 
in different directions use can be made of the
Phragm\'en--Lindel\"of indicator function:
\beq
h_f(\theta) \doteq \limsup_{r\to\infty}\frac{\log\left|f(re^{\rmi\theta})\right|}{r}.
\label{0.5}
\eeq
Then, we have the following theorem.

\renewcommand{\thetheorem}{A}

\begin{theorem}[Carlson \cite{Boas}]
If $f(z)$ is regular and of exponential type in the half--plane $\Real z \geqslant 0$ and
$h_f(\frac{\pi}{2})+h_f(-\frac{\pi}{2}) < 2\pi$, then $f(z) \equiv 0$ if $f(N) = 0$ for $N=0,1,2,\ldots$.
\end{theorem}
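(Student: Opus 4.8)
The plan is to prove Carlson's theorem by the classical Phragm\'en--Lindel\"of route: divide $f$ by $\sin(\pi z)$ --- which vanishes at exactly the same non--negative integers --- and show that the resulting quotient must vanish identically, because its indicator turns out to be too negative in the two boundary directions $\pm\tfrac\pi2$.

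First I would dispose of the trivial case and assume $f\not\equiv0$ with $f(N)=0$ for $N=0,1,2,\dots$. The zeros of $\sin(\pi z)$ lying in $\Real z\geqslant0$ are exactly the points $z=0,1,2,\dots$, all simple, and $z=0$ is its only zero on the imaginary axis; since $f$ vanishes at all of them, the function
\[
\phi(z)\doteq\frac{f(z)}{\sin(\pi z)}
\]
extends to a function regular in the whole closed half--plane $\Real z\geqslant0$. The next step is to verify that $\phi$ is again of exponential type there. Using the identity $|\sin(\pi z)|^2=\sin^2(\pi\,\Real z)+\sinh^2(\pi\,\Imag z)$, which off the discs $|z-n|\leqslant\tfrac14$ ($n=0,1,2,\dots$) is bounded below by a positive constant, the bound $|f(z)|\leqslant C\,e^{\tau|z|}$ passes directly to $\phi$ outside those discs; inside each of them $\phi$ is regular, so the maximum principle transfers the estimate from the bounding circle (near $z=0$ one uses instead that $\phi$ is continuous on the compact half--disc). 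Hence $|\phi(z)|\leqslant C'e^{\tau|z|}$ throughout $\Real z\geqslant0$. Finally, since $\log|\sinh(\pi y)|/|y|\to\pi$ as $|y|\to\infty$, the Phragm\'en--Lindel\"of indicators of $\phi$ and $f$ are related by
\[
h_\phi\!\left(\tfrac\pi2\right)=h_f\!\left(\tfrac\pi2\right)-\pi,
\qquad
h_\phi\!\left(-\tfrac\pi2\right)=h_f\!\left(-\tfrac\pi2\right)-\pi,
\]
so that the hypothesis $h_f(\tfrac\pi2)+h_f(-\tfrac\pi2)<2\pi$ becomes precisely $h_\phi(\tfrac\pi2)+h_\phi(-\tfrac\pi2)<0$.

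It then remains to invoke the fact that a function $\phi$ regular and of exponential type in the closed half--plane $\Real z\geqslant0$ and with $h_\phi(\tfrac\pi2)+h_\phi(-\tfrac\pi2)<0$ must vanish identically. This is the sharp form of the Phragm\'en--Lindel\"of principle in a half--plane: the indicator of a function regular and of exponential type in a closed angle is a finite, continuous, trigonometrically convex function of the direction, and trigonometric convexity across two directions spanning an angle of exactly $\pi$ forces $h_\phi(\tfrac\pi2)+h_\phi(-\tfrac\pi2)\geqslant0$ unless $\phi\equiv0$ (see \cite{Boas}). This contradicts the inequality obtained above, whence $\phi\equiv0$ and therefore $f(z)=\phi(z)\sin(\pi z)\equiv0$. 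I expect this last step to be the main obstacle: exponential type is exactly the borderline growth at which the elementary half--plane Phragm\'en--Lindel\"of theorem fails, so one genuinely needs the behaviour of the trigonometrically convex indicator across an angle of exactly $\pi$, i.e.\ the information that $h_f(\tfrac\pi2)+h_f(-\tfrac\pi2)$ is \emph{strictly} less than $2\pi$. Everything preceding it --- forming $\phi$, the exponential--type estimate, and the indicator identities --- is routine.
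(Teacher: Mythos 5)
This statement is quoted in the paper as Theorem A with a citation to Boas; the paper gives no proof of it at all, so there is no internal argument to compare yours against. Judged on its own, your sketch is the standard classical proof (essentially the one in Boas): divide by $\sin(\pi z)$, check that the quotient $\phi$ is regular and of exponential type in the half--plane, shift the indicator by $\pi$ in the two boundary directions, and conclude from the half--plane indicator inequality. The outline is correct, and the indicator computation $h_\phi(\pm\tfrac{\pi}{2})=h_f(\pm\tfrac{\pi}{2})-\pi$ is fine since $\log|\sinh(\pi r)|/r$ has a genuine limit.

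Two caveats. First, the point $z=0$: if ``regular in $\Real z\geqslant 0$'' is read as analytic in the open half--plane and merely continuous on the boundary, then $f(0)=0$ does not by itself make $\phi=f/\sin(\pi z)$ bounded near $0$, and your appeal to ``continuity of $\phi$ on the compact half--disc'' presupposes what is to be shown; the clean fix is to run the argument for $g(z)=f(z+1)$, which is analytic in a neighbourhood of the closed half--plane and vanishes at $0,1,2,\dots$, and then use the identity theorem. Second, and more substantively, your final step --- that a function regular and of exponential type in $\Real z\geqslant 0$ with $h(\tfrac{\pi}{2})+h(-\tfrac{\pi}{2})<0$ vanishes identically --- is invoked, not proved. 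It is true, but it is the whole content of the theorem: trigonometric convexity degenerates when the two directions span an angle of exactly $\pi$ (there is no trig--linear interpolant unless the two boundary values already sum to zero), so one needs the limiting comparison argument (interpolate on $[-\tfrac{\pi}{2},\tfrac{\pi}{2}-\epsilon]$ and let $\epsilon\to 0$, forcing $h=-\infty$ inside if the sum were negative) together with the finiteness/continuity properties of the indicator in a closed angle. As written, your proposal is therefore a correct reduction of Carlson's theorem to that classical indicator result in Boas rather than a self--contained proof --- which is acceptable in the present context, since the paper itself simply cites Boas for the theorem, but you should state explicitly that this is where the real work lies.
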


\setcounter{theorem}{0}
\renewcommand{\thetheorem}{\arabic{theorem}}

For our purposes we shall consider a subset of the functions which fulfill Carlson's theorem, 
that is, the functions satisfying the following bound, which can be named \emph{Carlson's bound}:
\beq
h_f(\theta) \leqslant b \, |\sin\theta|
\qquad \left(b < \pi; -\frac{\pi}{2}\leqslant\theta\leqslant\frac{\pi}{2}\right).
\label{0.6}
\eeq
Evidently, the functions satisfying condition \eqref{0.6} satisfy also the assumptions 
of Carlson's theorem.

\vskip 0.2cm

Let $\N=\{0,1,2,\ldots\}$ denote the set of all natural numbers.
Throughout the paper we shall use the notation
$f_N \doteq f\left(N+\frac{1}{2}\right)$, $N\in\N$, to denote the samples of the
function $f(z)$ at the equidistant interpolation nodes $\left(N+\frac{1}{2}\right)$; 
we shall refer to the set $\{f_N\}_{N=0}^\infty$ as the ``data set''.
We can prove the following interpolation theorem.

\begin{theorem}
\label{the:1new}
Assume that the function $f(z)$ $(z\in\C; z=x+\rmi y; x,y\in\R)$ enjoys the following properties:
\begin{itemize}\addtolength{\itemsep}{0.5\baselineskip}
\item[\rm (i)] $f(z)$ is holomorphic in $\Real z > 0$, continuous at $\Real z = 0$;
\item[\rm (ii)] $f(z)$ satisfies Carlson's bound \eqref{0.6};
\item[\rm (iii)] $f(\rmi y)\in L^2(-\infty,+\infty)$;
\item[\rm (iv)] $\sds\sum_{N=1}^\infty \frac{|f_N|}{N} < \infty$.
\end{itemize}
Then the following equality holds for $x>-\ud$:
\beq
f\left(x+\frac{1}{2}\right)=
\sum_{N=0}^\infty f_N\sinc(x-N)
-\frac{\sin\pi x}{2\pi}\int_{-\infty}^{+\infty}
\frac{\sum_{n=0}^\infty c_n\,\psi_n(y)}{(x+\ud-\rmi y)\cosh\pi y}\,\rmd y,
\label{0.7}
\eeq
where $\sinc(t)\doteq \frac{\sin\pi t}{\pi t}$ for $t\neq 0$ and $\sinc(0)\doteq 1$. 
The coefficients $c_n$ are given by:
\beq
c_n = 2\sqrt{\pi}\sum_{N=0}^\infty \frac{(-1)^N}{N!} f_N\, 
P_n\left[-\rmi\left(N+\frac{1}{2}\right)\right] \qquad (n\in\N),
\label{0.8}
\eeq
and the set of functions $\{\psi_n\}_{n=0}^\infty$ is defined by
\beq
\psi_n (y) \doteq \frac{1}{\sqrt{\pi}}\,\Gamma\left(\ud+\rmi y\right)\,P_n(y) 
\qquad (n\in\N;y\in\R),
\label{0.8.bis}
\eeq
where $\Gamma$ is the Euler Gamma function, and $P_n$ denotes the 
Meixner--Pollaczek polynomials $P_n^{(\alpha)}$ with $\alpha=\ud$.
\end{theorem}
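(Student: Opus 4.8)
The plan is to build the interpolation formula \eqref{0.7} by combining the classical sampling (cardinal series) expansion of the restriction $f|_{\R}$ with a correction term that accounts for the failure of $f$ to be band-limited. The starting observation is that under hypotheses (i)--(iii), the function $g(y)\doteq f(\rmi y)$ lies in $L^2(\R)$, and by the Paley--Wiener-type philosophy behind Carlson's theorem the "missing" information in passing from the samples $\{f_N\}$ to the full function $f$ is encoded in the boundary values on the imaginary axis. Concretely, I would first write $f(x+\ud)$ via a contour-integral / Mittag-Leffler style representation: consider the function $x\mapsto f(x+\ud)\,\frac{\pi}{\sin\pi x}$ or, better, apply Cauchy's theorem to $f(z)/\sin\pi(z-\ud)$ along a contour consisting of the imaginary axis and a large semicircle in $\Real z>0$. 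Carlson's bound \eqref{0.6}, together with $f(\rmi y)\in L^2$, should kill the semicircular contribution (this is the step where the constant $b<\pi$ is essential, exactly as in the proof of Carlson's theorem), leaving an integral over the imaginary axis plus the sum over the residues of $1/\sin\pi(z-\ud)$ at the half-integers $N+\ud$, which produces precisely $\sum_N f_N\,\sinc(x-N)$.

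Next I would massage the imaginary-axis integral into the stated form. The residues give the cardinal series; the leftover line integral is $\displaystyle \frac{\sin\pi x}{2\pi\rmi}\int_{-\infty}^{+\infty}\frac{f(\rmi y)}{(x+\ud-\rmi y)}\,\frac{\rmd y}{\text{(something)}}$ up to a factor coming from $\sin\pi(\rmi y-\ud)=-\cosh\pi y$ (hence the $\cosh\pi y$ in the denominator of \eqref{0.7}). So the correction term is essentially $-\frac{\sin\pi x}{2\pi}\int \frac{f(\rmi y)}{(x+\ud-\rmi y)\cosh\pi y}\,\rmd y\cdot(\cosh\pi y)$, i.e. the boundary datum $f(\rmi y)$ itself appears, weighted. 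It then remains to identify $f(\rmi y)$ with $\sum_{n=0}^\infty c_n\psi_n(y)$. For this I would invoke the Appendix material on Meixner--Pollaczek polynomials: the functions $\psi_n(y)=\frac{1}{\sqrt\pi}\Gamma(\ud+\rmi y)P_n^{(1/2)}(y)$ form an orthogonal (indeed orthonormal, with the right normalization) system in $L^2(\R)$ with respect to the weight $|\Gamma(\ud+\rmi y)|^{-2}$ built into the definition, equivalently $\{P_n^{(1/2)}\}$ is orthogonal for the weight $|\Gamma(\ud+\rmi y)|^2=\pi/\cosh\pi y$. Hence any $L^2$ function — in particular $f(\rmi y)$, or rather $f(\rmi y)/\Gamma(\ud+\rmi y)\cdot\sqrt\pi$ — expands in this basis, and the expansion coefficients are inner products $c_n=\langle f(\rmi\cdot),\psi_n\rangle$ against the $P_n$'s. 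The final task is to show these coefficients equal the sum \eqref{0.8} over the real samples $f_N$: this should follow by inserting the cardinal-series-plus-correction representation back into the integral defining $c_n$, or more cleanly by using the known generating function / integral representation of the Meixner--Pollaczek polynomials together with the binomial-type identity that produces the factor $\frac{(-1)^N}{N!}P_n[-\rmi(N+\ud)]$; hypothesis (iv), $\sum |f_N|/N<\infty$, is what guarantees the interchange of summation and integration and the convergence of \eqref{0.8}.

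The main obstacle, I expect, is the rigorous justification of the contour-shifting argument and the attendant convergence issues: one must verify that the large-semicircle term genuinely vanishes using only $h_f(\pm\pi/2)$-type bounds (Carlson's bound gives control near $\theta=\pm\pi/2$ but one needs a Phragmén--Lindelöf argument to propagate it into the open half-plane), and one must check that the resulting line integral converges and can be split into the residue sum plus the $\psi_n$-integral without spurious cross terms — this is delicate precisely because the cardinal series $\sum f_N\sinc(x-N)$ and the correction integral are individually only conditionally controlled, and it is their combination that is well-behaved. A secondary technical point is establishing the $L^2$-completeness and the precise normalization of $\{\psi_n\}$, and proving that the double series/integral manipulations yielding \eqref{0.8} are legitimate under hypothesis (iv) alone; here I would lean on standard Meixner--Pollaczek orthogonality relations (recalled in the Appendix) and on dominated convergence, treating the tail $\sum_{N\ge 1}|f_N|/N$ as the dominating majorant.
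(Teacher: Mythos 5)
Your contour argument for \eqref{0.7} is essentially the paper's: the kernel $1/\sin\pi(z-\ud)$ you propose is, up to the constant $-1/\pi$, exactly the paper's kernel $\Gamma(\ud+z)\Gamma(\ud-z)=\pi/\cos\pi z$ with poles at the half--integers; the large semicircle is discarded via Carlson's bound \eqref{0.6}, the residues at $z=N+\ud$ produce the cardinal series $\sum_N f_N\sinc(x-N)$, the imaginary--axis contribution gives the correction term with the $\cosh\pi y$ weight (the paper's \eqref{0.12}), and $f(\rmi y)$ is then expanded in the orthonormal Meixner--Pollaczek basis $\{\psi_n\}$ using hypothesis (iii). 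Up to this point you are on the paper's route.

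The one genuine weak spot is your treatment of \eqref{0.8}. The paper obtains it by applying the \emph{same} contour--closing trick a second time, now to the coefficient integral $c_n=\frac{1}{\sqrt\pi}\int_{-\infty}^{+\infty} f(\rmi y)\,\Gamma(\ud-\rmi y)\,P_n(y)\,\rmd y$ (formula \eqref{0.16}): setting $z=\rmi y$ and closing in $\Real z>0$, the integrand already contains $\Gamma(\ud-z)$, whose poles at $z=N+\ud$ have residues $(-1)^{N+1}/N!$, and this is precisely the origin of the factors $\frac{(-1)^N}{N!}\,P_n\left[-\rmi\left(N+\ud\right)\right]$ in \eqref{0.8}; the $1/N!$ makes the resulting series converge comfortably under the stated summability of the samples. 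Of your two suggested routes, the first --- re-inserting the ``cardinal series plus correction'' representation into the integral defining $c_n$ --- is circular, because the correction term contains the very coefficients $c_n$ you are trying to compute; the second (generating function plus a ``binomial-type identity'') is left unsubstantiated and does not by itself produce the residue factors. The missing idea is simply that the coefficient integral is itself amenable to the Gamma--kernel residue evaluation you already used for the main formula.
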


\begin{proof}
In view of conditions (i) and (ii), by Cauchy's integral formula we have, 
for $R\rightarrow\infty$ (see Fig. \ref{fig:1}):
\beq
\frac{1}{2\pi\rmi}\oint_{\cC}\frac{f(z)\Gamma(\frac{1}{2}+z)\Gamma(\frac{1}{2}-z)}{z-z'}\,\rmd z=
f(z')\Gamma\left(\frac{1}{2}+z'\right) \Gamma\left(\frac{1}{2}-z'\right),
\label{0.10}
\eeq
where $\cC$ is the path shown in Fig. \ref{fig:1}, 
and $z'$ belongs to the half--plane $\Real z>0$, but $z' \not\in \cC$.
Next, the integral along the path $\cC$ on the left--hand side (l.h.s.) of \eqref{0.10} 
can be evaluated; noting that the integrand has simple
poles at $z_N = N+\frac{1}{2}$, $N\in\N$, which are brought by $\Gamma(\frac{1}{2}-z)$, we have:
\beq
\begin{split}
& \frac{1}{2\pi\rmi}\oint_{\cC}\frac{f(z)\Gamma(\frac{1}{2}+z)\Gamma(\frac{1}{2}-z)}{z-z'}\,\rmd z \\
& \quad =\sum_{N=0}^\infty f_N\,\frac{(-1)^N}{N+\frac{1}{2}-z'}
-\frac{1}{2\pi}\int_{-\infty}^{+\infty}
\frac{f(\rmi y)\Gamma(\frac{1}{2}+\rmi y) \Gamma(\frac{1}{2}-\rmi y)}{\rmi y-z'}\,\rmd y.
\end{split}
\label{0.11}
\eeq

\begin{figure}[tb] 
\begin{center}
\leavevmode
\includegraphics[width=5cm]{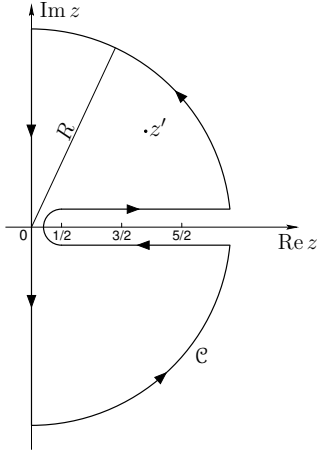}
\caption{\label{fig:1} \small \textsl{Integration path $\cC$ for formula (\protect\ref{0.10}).}}
\end{center}
\end{figure}

\noindent
Then we put $z'=x+\ud+\rmi\epsilon$ ($x>-\frac{1}{2}, x \not\in\N; \epsilon>0$).
Next, deforming appropriately the integration path, we push the point $z'$ up to the 
real axis computing the limit for $\epsilon\rightarrow 0$. By exploiting the relations
$\Gamma(-x)\Gamma(x+1)= -\pi(\sin\pi x)^{-1}$ and
$\Gamma(\frac{1}{2}+\rmi y)\Gamma(\frac{1}{2}-\rmi y)=\pi(\cosh\pi y)^{-1}$,
from \eqref{0.10} and \eqref{0.11} we obtain for $x>-\ud$:
\beq
f\left(x+\frac{1}{2}\right)=
\sum_{N=0}^\infty f_N\sinc(x-N)
-\frac{\sin\pi x}{2\pi}\int_{-\infty}^{+\infty}
\frac{f(\rmi y)}{[(x+\frac{1}{2})-\rmi y]\cosh\pi y}\,\rmd y.
\label{0.12}
\eeq
The integral on the right--hand side (r.h.s.) of \eqref{0.12} converges, 
as can be shown by using assumption (iii) and Schwarz's inequality (see also
the inequalities which will be given at the end of the proof).
Formula \eqref{0.12} has been obtained under the hypothesis $x\not\in\N$; 
however, it is easy to see that the limit for $x\to N$
($N\in\N$) of both sides of \eqref{0.12} leads to the identity 
$f(N+\frac{1}{2})=f(N+\frac{1}{2})$, so that formula
\eqref{0.12} actually holds for every $x>-\frac{1}{2}$.
Let us now introduce the Meixner--Pollaczek polynomials $P^{(1/2)}_n(y)$ 
\cite{Bateman,Pollaczek,Szego}, which are orthonormal with respect to the 
weight function $w(y)$\footnote{Hereafter the superscript (1/2)
in $P^{(1/2)}_n(y)$ will be omitted for simplicity (see also the Appendix).}:
\beq
w(y) \doteq \frac{1}{\pi}\left|\Gamma\left(\frac{1}{2}+\rmi y\right)\right|^2 = \frac{1}{\cosh\pi y}.
\label{0.13}
\eeq
Then we consider the set of functions $\{\psi_n\}_{n=0}^\infty$ defined in \eqref{0.8.bis},
which form an orthonormal basis in $L^2(-\infty,+\infty)$ \cite{Itzykson}.
Now, in view of property (iii), $f(\rmi y)$ may be expanded in the basis $\{\psi_n\}_{n=0}^\infty$:
\beq
f(\rmi y) = \sum_{n=0}^\infty c_n \psi_n(y),
\label{0.15}
\eeq
the convergence being in the $L^2$--norm. By the orthonormal property of the basis 
$\{\psi_n\}_{n=0}^\infty$, the coefficients are given by:
\beq
c_n = \frac{1}{\sqrt{\pi}}\int_{-\infty}^{+\infty} f(\rmi y)
\Gamma\left(\frac{1}{2}-\rmi y\right) P_n(y)\,\rmd y \qquad (n\in\N).
\label{0.16}
\eeq
Next, putting $\rmi y = z$, ($z\in\C$), and evaluating the integral in 
\eqref{0.16} by the complex integration method
along the path $\cC$ shown in Fig. \ref{fig:1}, we obtain formula \eqref{0.8}.
Now, inserting expansion \eqref{0.15} into the integral on the r.h.s. of \eqref{0.12} 
we have the term:
\beq
\int_{-\infty}^{+\infty}\frac{\sum_{n=0}^\infty c_n\,\psi_n(y)}
{(x+\frac{1}{2}-\rmi y)\cosh\pi y}\,\rmd y,
\label{0.17}
\eeq
whose convergence is easily proved by using the Schwarz inequality:
\beq
\begin{split}
& \left(\int_{-\infty}^{+\infty}\!\!\frac{\sum_{n=0}^\infty c_n\,\psi_n(y)}
{(x+\frac{1}{2}-\rmi y)\cosh\pi y}\,\rmd y\right)^2 
\leqslant \sum_{n=0}^\infty\left|c_n\right|^2
\int_{-\infty}^{+\infty}\!\!\frac{1}{|x+\frac{1}{2}-\rmi y|^2\cosh^2\pi y}\,\rmd y \\
& \quad = \left\|f(\rmi y)\right\|^2 \,
\int_{-\infty}^{+\infty}\frac{1}{|x+\frac{1}{2}-\rmi y|^2\cosh^2\pi y}\,\rmd y < \infty.
\end{split}
\label{0.17.bis}
\eeq
Finally, plugging integral \eqref{0.17} in formula \eqref{0.12}, we obtain formula
\eqref{0.7}.
\end{proof}

\begin{remark}
In the numerical analysis (see Section \ref{se:numerical}) sums of the type 
$\sum_{n=0}^\infty c_n\psi_n$ (or similar) are repeatedly used but (obviously) 
truncated at a suitable finite value of $n$, say $n=\hn$. In these cases, sum and integral
in \eqref{0.17} may be interchanged, yielding:
\beq
\ud\int_{-\infty}^{+\infty}\frac{\sum_{n=0}^{\hn}c_n\psi_n(y)}
{[(x+\ud)-\rmi y]\cosh\pi y}\,\rmd y
=\sum_{n=0}^{\hn} c_n\,Q_n\left[-\rmi\left(x+\ud\right)\right],
\label{0.18.new}
\eeq
where the function $Q_n$ is defined by 
\beq
Q_n\left[-\rmi\left(x+\ud\right)\right] \doteq \frac{\rmi}{2\sqrt{\pi}}
\int_{-\infty}^{+\infty}\frac{P_n(y)\,\Gamma(\ud+\rmi y)}{[\rmi(x+\ud)+y]\cosh\pi y}\,\rmd y
\qquad (n\in\N).
\label{0.18.new.1}
\eeq
\label{rem:1}
\end{remark}

\section{Interpolation formula for a function meromorphic in the half--plane $\Real z>0$}
\label{se:interpolation:meromorphic}

Consider now the case of meromorphic functions. For simplicity, we consider
a function $f(z)$, which has only one singularity in the half--plane $\Real z >0$, and
we assume that this singularity is a first order pole, whose residue is $R_\rmp$ ($R_\rmp\neq 0$). 
The extension to the case of several first order poles is straightforward. 
We now prove the following theorem.

\begin{theorem}
\label{the:2new}
Assume that the meromorphic function $f(z)$ has a first order pole at $z=z_\rmp$ with $\Real z_\rmp>0$, 
whose residue is $R_\rmp$ ($R_\rmp\neq 0$). Suppose that $f(z)$ is holomorphic in the half--plane 
$\Real z > \Real z_\rmp$, continuous at $\Real z = 0$, satisfies Carlson's bound \eqref{0.6} 
and conditions (iii) and (iv) of Theorem \ref{the:1new}. Then, the following interpolation 
formula holds:
\beq
\begin{split}
f\left(x+\frac{1}{2}\right) &= \sum_{N=0}^\infty f_N \sinc(x-N)
-\frac{R_\rmp}{\cos(\pi z_\rmp)}\frac{\sin\pi x}{(x+\ud-z_\rmp)} \\
&\quad -\frac{\sin\pi x}{2\pi}\int_{-\infty}^{+\infty}
\frac{\sum_{n=0}^\infty c_n^{(\rmp)}\,\psi_n(y)}{[(x+\ud)-\rmi y]\cosh\pi y}\,\rmd y
\qquad \left(x > -\frac{1}{2}\right),
\label{3.1}
\end{split}
\eeq
where, for $n\in\N$:
\beq 
c_n^{(\rmp)} =
2\sqrt{\pi}\left\{\sum_{N=0}^\infty \frac{(-1)^N}{N!} f_N\, P_n\left[-\rmi\left(N+\frac{1}{2}\right)\right]
-R_\rmp\Gamma\left(\frac{1}{2}-z_\rmp\right) P_n\left(-\rmi z_\rmp\right)\right\}.
\label{3.2}
\eeq
\end{theorem}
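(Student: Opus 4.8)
The plan is to mimic the proof of Theorem \ref{the:1new}, the only new feature being the single simple pole of $f$ at $z=z_\rmp$ inside the half--plane $\Real z>0$. First I would repeat the contour--integral step: apply Cauchy's formula to the function $f(z)\Gamma(\frac12+z)\Gamma(\frac12-z)/(z-z')$ along the path $\cC$ of Fig.~\ref{fig:1}, with $z'$ in the half--plane $\Real z>\Real z_\rmp$ but off $\cC$. The key difference is that, when the path is expanded to infinity and the residues are collected, in addition to the simple poles at $z_N=N+\frac12$ brought by $\Gamma(\frac12-z)$ one now also picks up the residue at $z=z_\rmp$ coming from the pole of $f$ itself. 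Since the weight $\Gamma(\frac12+z)\Gamma(\frac12-z)$ is analytic at $z_\rmp$ (as $\Real z_\rmp>0$ is not a half--integer $\leqslant 0$, and we may assume $z_\rmp$ is not a positive half--integer), that extra contribution is $R_\rmp\,\Gamma(\frac12+z_\rmp)\Gamma(\frac12-z_\rmp)/(z_\rmp-z')$. Dividing through by $\Gamma(\frac12+z')\Gamma(\frac12-z')$, putting $z'=x+\frac12+\rmi\epsilon$ and letting $\epsilon\to 0$, and using the two Gamma--function identities $\Gamma(-x)\Gamma(x+1)=-\pi/\sin\pi x$ and $\Gamma(\frac12+\rmi y)\Gamma(\frac12-\rmi y)=\pi/\cosh\pi y$ exactly as before, the sinc--series and the boundary integral over $\rmi\R$ reappear unchanged, while the extra pole term produces precisely the middle summand in \eqref{3.1}: the identity $\Gamma(\frac12+z_\rmp)\Gamma(\frac12-z_\rmp)=\pi/\cos(\pi z_\rmp)$ (from $\cos\pi z=\cosh(\pi\,\rmi z)$-type reduction, i.e. the reflection formula applied at $\frac12\pm z_\rmp$) converts the prefactor into $-R_\rmp\sin\pi x\,/\,[\cos(\pi z_\rmp)(x+\frac12-z_\rmp)]$.

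At this intermediate stage one has the analogue of \eqref{0.12},
\beq
f\!\left(x+\tfrac12\right)=\sum_{N=0}^\infty f_N\sinc(x-N)
-\frac{R_\rmp}{\cos(\pi z_\rmp)}\frac{\sin\pi x}{(x+\ud-z_\rmp)}
-\frac{\sin\pi x}{2\pi}\int_{-\infty}^{+\infty}\frac{f(\rmi y)}{[(x+\ud)-\rmi y]\cosh\pi y}\,\rmd y,
\label{3.1prime}
\eeq
valid a priori for $x\not\in\N$, $x>-\frac12$, and then extended to all $x>-\frac12$ by the same limiting argument as in Theorem \ref{the:1new} (the pole term stays finite at $x=N$ because $\sin\pi x\to 0$ there, provided $z_\rmp\neq N+\frac12$). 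The boundary function $f(\rmi y)$ still lies in $L^2(\R)$ by hypothesis (iii), so it admits the expansion $f(\rmi y)=\sum_{n=0}^\infty c_n^{(\rmp)}\psi_n(y)$ in the orthonormal basis $\{\psi_n\}$, with $c_n^{(\rmp)}=\frac1{\sqrt\pi}\int_{-\infty}^{+\infty} f(\rmi y)\,\Gamma(\frac12-\rmi y)\,P_n(y)\,\rmd y$, and the convergence of the resulting integral \eqref{0.17} with $c_n$ replaced by $c_n^{(\rmp)}$ is guaranteed verbatim by the Schwarz estimate \eqref{0.17.bis}. Substituting this expansion into the boundary integral of \eqref{3.1prime} yields \eqref{3.1}.

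It remains to identify the coefficients $c_n^{(\rmp)}$ as in \eqref{3.2}. Here I would again set $z=\rmi y$ and evaluate $c_n^{(\rmp)}=\frac1{\sqrt\pi}\int_{-\infty}^{+\infty} f(\rmi y)\Gamma(\frac12-\rmi y)P_n(y)\,\rmd y$ by closing the contour in the right half--plane along $\cC$. The poles of the integrand $f(z)\Gamma(\frac12-z)P_n(-\rmi z)$ in $\Real z>0$ are now of two kinds: the half--integer poles $z_N=N+\frac12$ from $\Gamma(\frac12-z)$, whose residues reproduce the series $\sum_N \frac{(-1)^N}{N!}f_N\,P_n[-\rmi(N+\frac12)]$ exactly as in the derivation of \eqref{0.8}; and the extra simple pole of $f$ at $z=z_\rmp$, contributing $-R_\rmp\,\Gamma(\frac12-z_\rmp)\,P_n(-\rmi z_\rmp)$ (the sign and orientation matching those of the half--integer terms). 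Collecting and multiplying by $2\sqrt\pi$ gives \eqref{3.2}.

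The main obstacle — or rather the point requiring care rather than difficulty — is the justification of the contour deformations: one must check that $f(z)\Gamma(\frac12+z)\Gamma(\frac12-z)$ and $f(z)\Gamma(\frac12-z)P_n(-\rmi z)$ decay fast enough on the expanding arcs for the large--circle contributions to vanish. This uses Carlson's bound \eqref{0.6} on $h_f$ together with the known exponential decay of $|\Gamma(\frac12+\rmi y)|^2=\pi/\cosh\pi y$ and the at most polynomial growth of $P_n$, exactly as in Theorem \ref{the:1new}; the presence of the single pole only shifts the region of holomorphy of $f$ from $\Real z>0$ to $\Real z>\Real z_\rmp$, which is already built into the hypotheses and does not affect the asymptotic estimates. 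One should also record the harmless genericity assumption $z_\rmp\notin\{N+\frac12:N\in\N\}$ (or treat the coincident case by a limiting argument), ensuring the pole of $f$ does not collide with an interpolation node.
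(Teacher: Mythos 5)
Your proposal is correct and follows essentially the same route as the paper: the same contour integral over $\cC$ with the extra residue at $z_\rmp$ producing the middle term via $\Gamma(\frac{1}{2}+z_\rmp)\Gamma(\frac{1}{2}-z_\rmp)=\pi/\cos\pi z_\rmp$, then the expansion of $f(\rmi y)$ on $\{\psi_n\}$ and the contour evaluation of $c_n^{(\rmp)}$ collecting both the half--integer poles and the pole at $z_\rmp$. The only cosmetic remark is that there is no need to restrict $z'$ to $\Real z'>\Real z_\rmp$: the residue computation is valid for any $z'$ with $\Real z'>0$ enclosed by $\cC$ (distinct from $z_\rmp$ and the nodes), which is what yields the full range $x>-\frac{1}{2}$ directly.
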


\begin{proof}
Proceeding similarly to what we have done in the proof of Theorem \ref{the:1new}, 
and recalling that the function $f(z)$ has a first order pole in $z=z_\rmp$ 
with residue $R_\rmp$, we evaluate the following integral by using the residue theorem:
\beq
\frac{1}{2\pi\rmi}\oint_{\cC}\frac{f(z)\,\Gamma(\frac{1}{2}+z)\,\Gamma(\frac{1}{2}-z)}{z-z'}\,\rmd z
=\pi\left[\frac{f(z')}{\cos\pi z'}+\frac{R_\rmp}{(z_\rmp-z')\cos\pi z_\rmp}\right],
\label{1.44}
\eeq
where $\cC$ is the path shown in Fig. \ref{fig:1} which encloses $z'$ and $z_\rmp$,
but $z',z_\rmp \not\in\cC$. Then, proceeding as in Theorem \ref{the:1new}, we obtain:
\beq
\frac{f(z')}{\cos\pi z'}
=\frac{1}{\pi}\sum_{N=0}^\infty\frac{(-1)^N\,f_N}{N+\frac{1}{2}-z'}
-\frac{R_\rmp}{(z_\rmp-z')\cos\pi z_\rmp}
-\frac{1}{2\pi}\int_{-\infty}^{+\infty}\!\!\!\frac{f(\rmi y)}{(\rmi y-z')\cosh\pi y}\,\rmd y.
\label{1.45}
\eeq
Putting $z'=x+\frac{1}{2}+\rmi\epsilon$, ($x>-\frac{1}{2},x\not\in\N; \epsilon>0$), 
and proceeding as in Theorem \ref{the:1new}, from \eqref{1.45} we obtain:
\beq
\begin{split}
f\left(x+\frac{1}{2}\right)
&=\frac{\sin\pi x}{\pi}\left\{\sum_{N=0}^\infty (-1)^N \frac{f_N}{x-N}
-\frac{\pi R_\rmp}{(x+\frac{1}{2}-z_\rmp)\cos\pi z_\rmp} \right. \\
&\quad\left. -\frac{1}{2}\int_{-\infty}^{+\infty}
\frac{f(\rmi y)}{[(x+\frac{1}{2})-\rmi y]\cosh\pi y}\,\rmd y\right\}.
\label{1.46new}
\end{split}
\eeq
We then expand $f(\rmi y)$ on the orthonormal basis $\{\psi_n\}_{n=0}^\infty$: i.e.,
\beq
f(\rmi y)=\sum_{n=0}^\infty c_n^{(\rmp)}\psi_n(y),
\label{1.47new}
\eeq
where the convergence is in the $L^2$--norm (the superscript $'(\rmp)'$ in $c_n^{(\rmp)}$ is to
recall that these coefficients refer to a function with a pole). In view of the orthonormality property
of the basis $\{\psi_n\}_{n=0}^\infty$ we have:
\beq
c_n^{(\rmp)}=\frac{1}{\sqrt{\pi}}\int_{-\infty}^{+\infty}
f(\rmi y)\,\Gamma\left(\frac{1}{2}-\rmi y\right) P_n(y)\,\rmd y.
\label{1.48}
\eeq
Next, putting $\rmi y=z$, ($z\in\C$), and evaluating this integral by the use of the 
complex integration method along the path $\cC$, we obtain 
formula \eqref{3.2}. Then, plugging \eqref{1.47new} into \eqref{1.46new} we 
obtain the interpolation formula \eqref{3.1}. 
\end{proof}

\section{Consistency relations and the algorithm for pole recovery: case
of input data made of an infinite number of noiseless samples}
\label{se:consistency+algorithm:infinite}

\subsection{Consistency relations}
\label{subse:consistency:infinite}

We now continue to consider a meromorphic function $f(z)$ with only 
one first order pole in $\Real z>0$.
Let us introduce the following function associated with $f(z)$: i.e., 
$h(k;z)\doteq\left[z-\left(k+\frac{1}{2}\right)\right]f(z)$, 
($z\in\C, k\in\N$). We can prove the following theorem.

\begin{theorem}
\label{the:3new}
Assume that the meromorphic function $f(z)$ has a first order pole at $z=z_\rmp$ with 
$\Real z_\rmp>0$, whose residue is $R_\rmp$ ($R_\rmp \neq 0$). 
Suppose that $f(z)$ is holomorphic in the half--plane $\Real z > \Real z_\rmp$,
continuous at $\Real z = 0$, satisfies Carlson's bound \eqref{0.6}, and
\begin{flalign}
\mathrm{(i')}\qquad\sum_{N=0}^\infty |f_N| < \infty. && 
\label{ppp4}
\end{flalign}
Moreover, assume that for every $k\in\N$:
\begin{flalign}
\mathrm{(ii')}\qquad h(k;\rmi y) \in L^2 (-\infty,+\infty) \qquad (k\in\N). &&
\label{ppp44}
\end{flalign}
Then, for every $k\in\N$ the following equalities hold:
\beq
\begin{split}
f_k =& \,(-1)^{k+1}\left\{\sum_{N=0}^\infty (-1)^N f_N \, (1-\delta_{Nk}) \right. \\
&\left. -\ud\int_{-\infty}^{+\infty}
\frac{\sum_{n=0}^\infty c_{n,k}^{(\rmp)}\,\psi_n(y)}{[(x+\ud)-\rmi y]\cosh\pi y}\,\rmd y
-\frac{\pi R_\rmp}{\cos\pi z_\rmp}\right\} \qquad (k\in\N),
\end{split}
\label{4.2}
\eeq
where:
\beq
\begin{split}
c_{n,k}^{(\rmp)} &= 2\sqrt{\pi}
\left\{\sum_{N=0}^\infty \frac{(-1)^N}{N!}(N-k)f_N\,P_n\left[-\rmi\left(N+\frac{1}{2}\right)\right]\right.\\
& \quad\left.-\left(z_\rmp-k-\frac{1}{2}\right)
R_\rmp\,\Gamma\left(\frac{1}{2}-z_\rmp\right)\,P_n\left(-\rmi z_\rmp\right)\right\}\qquad (n,k\in\N),
\end{split}
\label{4.3}
\eeq
the functions $\psi_n(y)$ being defined in \eqref{0.8.bis}.
\end{theorem}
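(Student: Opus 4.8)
The plan is to deduce \eqref{4.2}--\eqref{4.3} from Theorem~\ref{the:2new} applied, for each fixed $k\in\N$, not to $f$ itself but to the function $h(k;z)=\bigl[z-\bigl(k+\ud\bigr)\bigr]f(z)$ introduced above. First I would verify that $h(k;\cdot)$ meets every hypothesis of Theorem~\ref{the:2new}: it is meromorphic with a single first order pole at $z=z_\rmp$ (note $z_\rmp\neq k+\ud$, since otherwise $f_k$ would not be defined), with residue $\bigl(z_\rmp-k-\ud\bigr)R_\rmp$; it is holomorphic in $\Real z>\Real z_\rmp$ and continuous at $\Real z=0$ because $f$ is; and it still satisfies Carlson's bound \eqref{0.6}, since the factor $z-(k+\ud)$ is a polynomial, so $\log\bigl|z-(k+\ud)\bigr|=o(|z|)$ and the Phragm\'en--Lindel\"of indicator is unchanged, $h_{h(k;\cdot)}(\theta)=h_f(\theta)$. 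Its samples are $h\bigl(k;N+\ud\bigr)=(N-k)f_N$; hence condition (iv) of Theorem~\ref{the:1new} for $h(k;\cdot)$ reads $\sum_{N\geqslant 1}\frac{|N-k|}{N}\,|f_N|<\infty$, which follows from hypothesis $\mathrm{(i')}$ because $|N-k|/N\leqslant 1+k$, while condition (iii) of Theorem~\ref{the:1new} for $h(k;\cdot)$ is precisely hypothesis $\mathrm{(ii')}$.

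Granting this, Theorem~\ref{the:2new} gives, for $x>-\ud$,
\beq
\begin{split}
(x-k)\,f\bigl(x+\ud\bigr) &= \sum_{N=0}^\infty (N-k)\,f_N\,\sinc(x-N)
-\frac{\bigl(z_\rmp-k-\ud\bigr)R_\rmp}{\cos\pi z_\rmp}\,\frac{\sin\pi x}{x+\ud-z_\rmp}\\
&\quad -\frac{\sin\pi x}{2\pi}\int_{-\infty}^{+\infty}
\frac{\sum_{n=0}^\infty c_{n,k}^{(\rmp)}\,\psi_n(y)}{[(x+\ud)-\rmi y]\cosh\pi y}\,\rmd y,
\end{split}
\eeq
in which the coefficients coming from \eqref{3.2} (with $f_N$ replaced by $(N-k)f_N$ and $R_\rmp$ by $(z_\rmp-k-\ud)R_\rmp$) are exactly \eqref{4.3}. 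The $N=k$ term of the series vanishes, and both sides of this identity vanish at $x=k$ (on the right because $\sin\pi k=0$ and $\sinc(k-N)=\delta_{kN}$). I would then divide by $(x-k)$ and let $x\to k$ (equivalently, differentiate both sides at $x=k$): the left-hand side tends to $f\bigl(k+\ud\bigr)=f_k$, and on the right-hand side, using $\sinc(x-N)=(-1)^N\sin\pi x/[\pi(x-N)]$ together with $\sin\pi x/(x-k)\to(-1)^k\pi$, the three limiting contributions reproduce the three terms inside the braces of \eqref{4.2} (the variable $x$ appearing in the integral of \eqref{4.2} being understood as set to its limiting value $x=k$).

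The delicate points are the two passages to the limit. For the series, on a punctured neighbourhood of $k$ the general summand $\frac{(N-k)f_N}{x-k}\,\sinc(x-N)$ is dominated by $2\,|f_N|$ — from $|\sin\pi x|\leqslant\pi|x-k|$ and $|x-N|\geqslant\ud|N-k|$ for $N\neq k$ — so hypothesis $\mathrm{(i')}$ licenses termwise passage to the limit by dominated convergence. For the integral one proceeds as in \eqref{0.17.bis}: $\sum_{n=0}^\infty c_{n,k}^{(\rmp)}\psi_n(\cdot)=h(k;\rmi\,\cdot)$ lies in $L^2$ by $\mathrm{(ii')}$, $|(x+\ud)-\rmi y|$ stays bounded away from $0$ for $x$ near $k$, and Schwarz's inequality produces an $L^1$ majorant of the integrand independent of $x$, so the limit may be taken under the integral sign. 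The one genuinely fiddly part I expect is the bookkeeping of signs, which must be tracked through each of the three terms using $\cos\pi k=(-1)^k$ and $z_\rmp-k-\ud=-(k+\ud-z_\rmp)$.
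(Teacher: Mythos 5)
Your proposal is correct and follows essentially the same route as the paper's own proof: apply Theorem \ref{the:2new} to $h(k;z)$, whose samples are $(N-k)f_N$ and whose residue at $z_\rmp$ is $(z_\rmp-k-\ud)R_\rmp$, and then divide by $(x-k)$ and let $x\to k$ (the paper phrases this as differentiating at $x=k$); your verification of the hypotheses for $h(k;\cdot)$ and of the two limit interchanges is in fact more explicit than what the paper provides. One bookkeeping caveat: carried through, your limit of the integral term produces the coefficient $+\ud$ inside the braces, which agrees with the paper's own intermediate formula \eqref{4.8} (and with the later formulas \eqref{55.23}, second form, and \eqref{M1} via \eqref{0.18.new}), so the sign $-\ud$ printed in \eqref{4.2} is a typo of the paper rather than something your argument needs to reproduce.
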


\begin{proof}
In view of the conditions listed above, the results of Theorem \ref{the:2new}
can be applied to the function $h(k;z)$, which has a first order pole in $z=z_\rmp$
with residue $(z_\rmp-k-\ud)R_\rmp$, and whose samples are 
$h(k;N+\ud)=(N-k)f_N$; then, for every $k\in\N$ and 
$x>-\frac{1}{2}$ the following interpolation formula holds (see \eqref{3.1}):
\beq
\begin{split}
& h\left(k;x+\frac{1}{2}\right)= (x-k) f\left(x+\frac{1}{2}\right)
=\sum_{N=0}^\infty (N-k)\,f_N\,\sinc(x-N) \\
&\quad-\frac{\sin\pi x}{2\pi}\int_{-\infty}^{+\infty}\frac{h(k;\rmi y)}{[(x+\ud)-\rmi y]\cosh\pi y}\,\rmd y
-\frac{(z_\rmp-k-\frac{1}{2})\,R_\rmp}{\cos\pi z_\rmp}\frac{\sin\pi x}{x+\frac{1}{2}-z_\rmp}.
\label{4.4}
\end{split}
\eeq
Next, for $k\in\N$ we compute the following derivative:
\beq
\left[\frac{\rmd}{\rmd x}(x-k) f\left(x+\frac{1}{2}\right)\right]_{x=k}
\!\! =\left[f\left(x+\frac{1}{2}\right)+(x-k)f'\left(x+\frac{1}{2}\right)\right]_{x=k} =f_k,
\label{4.5}
\eeq
so that we can equate $f_k$ to the derivative with respect to $x$, computed at $x=k$, 
of the r.h.s. of formula \eqref{4.4}. Therefore, we can formally write:
\beq
\begin{split}
& f_k = \sum_{N=0}^\infty (N-k) f_N \left[\frac{\rmd\sinc(x-N)}{\rmd x}\right]_{x=k} \\
&-\frac{1}{2\pi}\left(\frac{\rmd}{\rmd x}\sin\pi x\right)_{\!\!\!x=k} 
\int_{-\infty}^{+\infty}\frac{h(k;\rmi y)}{[(x+\ud)-\rmi y]\cosh\pi y}\,\rmd y 
-\left(\frac{\sin\pi x}{2\pi}\right)_{x=k} \\
& \times\frac{\rmd}{\rmd x} 
\int_{-\infty}^{+\infty}\frac{h(k;\rmi y)}{[(x+\ud)-\rmi y]\cosh\pi y}\,\rmd y
-\frac{(z_\rmp-k-\frac{1}{2})\,R_\rmp}{\cos\pi z_\rmp}
\left[\frac{\rmd}{\rmd x}\frac{\sin\pi x}{x+\frac{1}{2}-z_\rmp}\right]_{x=k} \hspace{-0.15cm}.
\label{4.6}
\end{split}
\eeq
We have: $\frac{\rmd}{\rmd x}\,\sin\pi x|_{x=k} = \pi (-1)^k$, and
\begin{align}
& \left.\frac{\rmd}{\rmd x}\sinc(x-N)\right|_{x\rightarrow k} = 
\frac{(-1)^{N+k+1}}{N-k}(1-\delta_{Nk}), \label{4.7a} \\
& \left.\frac{\rmd}{\rmd x}\,\,\frac{\sin\pi x}{x+\frac{1}{2}-z_\rmp}\right|_{x=k}
=\frac{\pi \, (-1)^k}{k+\frac{1}{2}-z_\rmp}, \label{4.7b}
\end{align}
which, substituted in \eqref{4.6}, yield formally:
\beq
\begin{split}
& f_k = (-1)^{k+1}\left\{\sum_{N=0}^\infty (-1)^N f_N (1-\delta_{Nk}) \right. \\
& \left.\qquad+\ud \int_{-\infty}^{+\infty}\frac{h(k;\rmi y)}{[(x+\ud)-\rmi y]\cosh\pi y}\,\rmd y 
-\frac{\pi R_\rmp}{\cos\pi z_\rmp} \right\} \qquad (k\in\N).
\end{split}
\label{4.8}
\eeq
It should be observed that the term by term differentiation of the series
$\sum_{N=0}^\infty (N-k) f_N \sinc(x-N)$ is legitimate in view of condition (i$'$). 
We can thus conclude that equalities  \eqref{4.8} are proved.
Next, since $h(k;\rmi y)\in L^2(-\infty,+\infty)$ ($k\in\N, y\in\R$), we can expand
$h(k;\rmi y)$ on the basis $\{\psi_n\}_{n=0}^\infty$:
\beq
h(k;\rmi y) = \sum_{n=0}^\infty c_{n,k}^{(\rmp)} \psi_n(y) \qquad (k\in\N, y\in\R),
\label{4.8.bis}
\eeq
which converges in the $L^2$--norm. Following procedures closely analogous to
those used in Theorems \ref{the:1new} and \ref{the:2new}, the explicit
expression of the coefficients $c_{n,k}^{(\rmp)}$, which is given in \eqref{4.3}, is easily obtained.
Finally, inserting the expansion \eqref{4.8.bis} into the integral on the r.h.s. of
\eqref{4.8}, formula \eqref{4.2} follows.
\end{proof}

For every $k\in\N$, formula \eqref{4.2} gives the value of the sample $f_k$ of the function $f(z)$
in terms of the value of all the other samples $f_N$, with $N \neq k$ (notice 
that also in \eqref{4.3} the contribution of the sample $f_N$ to the coefficient $c_{n,k}^{(\rmp)}$ 
is null for $N=k$). Therefore, equations \eqref{4.2} can be regarded as an (infinite) set 
of consistency relations, which make explicit the mutual constraints among the samples of $f(z)$ 
and the pole parameters $z_\rmp$ and $R_\rmp$. 
However, for this purpose, first we need to extend (in part) the results of the previous theorem,
and consider the expansion of the function $h(k;\rmi y)$ on the orthonormal basis 
$\{\psi_n\}_{n=0}^\infty$ for $k\in\R$ (not only for integral values of $k$). What we need
is stated in the following corollary.

\begin{corollary}
\label{cor:1}
Assume for the meromorphic function $f(z)$ all the conditions of Theorem \ref{the:3new}. 
Assume that the function $h(k;z)\doteq(z-k-\ud)f(z)$ ($z\in\C;k\in\R$) satisfies the following 
condition (which substitutes condition $\mathrm{(ii')}$ of Theorem \ref{the:3new}):
\begin{flalign}
\mathrm{(ii'')}\qquad h(k;\rmi y) \in L^2 (-\infty,+\infty) \quad (k\in\R). &&
\label{ppp45}
\end{flalign}
Then the following equality holds for $x>-\ud$ and for any $k\in\R$:
\beq
\begin{split}
& h\left(k;x+\frac{1}{2}\right)= (x-k) f\left(x+\frac{1}{2}\right)
=\sum_{N=0}^\infty (N-k)\,f_N\,\sinc(x-N) \\
&\quad-\frac{\sin\pi x}{2\pi}\int_{-\infty}^{+\infty}
\frac{\sum_{n=0}^\infty c_n^{(\rmp)}(k)\,\psi_n(y)}{[(x+\ud)-\rmi y]\cosh\pi y}\,\rmd y
-\frac{(z_\rmp-k-\frac{1}{2})\,R_\rmp}{\cos\pi z_\rmp}\frac{\sin\pi x}{x+\frac{1}{2}-z_\rmp},
\label{4.99}
\end{split}
\eeq
where the coefficients $c_n^{(\rmp)}(k)$ are given by:
\beq
\begin{split}
c_n^{(\rmp)}(k) &= 2\sqrt{\pi}
\left\{\sum_{N=0}^\infty \frac{(-1)^N}{N!}(N-k)f_N\,P_n\left[-\rmi\left(N+\frac{1}{2}\right)\right]\right.\\
& \quad\left.-\left(z_\rmp-k-\frac{1}{2}\right)
R_\rmp\,\Gamma\left(\frac{1}{2}-z_\rmp\right)\,P_n\left(-\rmi z_\rmp\right)\right\}\qquad (n\in\N, k\in\R),
\end{split}
\label{4.99.bis}
\eeq
and
\beq
\lim_{n\to\infty}c_n^{(\rmp)}(k)=0 \qquad (k\in\R).
\label{4.99.tris}
\eeq
\end{corollary}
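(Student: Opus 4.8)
The plan is to obtain Corollary \ref{cor:1} as a direct application of Theorem \ref{the:2new} to the auxiliary function $h(k;z)=(z-k-\ud)f(z)$, now allowing $k$ to range over $\R$, and then to deduce \eqref{4.99.tris} from Parseval's identity.

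First I would verify that, for every fixed $k\in\R$, the function $h(k;z)$ meets all the hypotheses of Theorem \ref{the:2new}. It is meromorphic with a single first--order pole at $z=z_\rmp$ of residue $(z_\rmp-k-\ud)R_\rmp$, and it inherits holomorphy in $\Real z>\Real z_\rmp$ and continuity at $\Real z=0$ from $f$ because $(z-k-\ud)$ is entire. Since that affine factor grows only polynomially, $\limsup_{r\to\infty} r^{-1}\log|re^{\rmi\theta}-k-\ud| = 0$, so the Phragm\'en--Lindel\"of indicator of $h(k;\cdot)$ coincides with that of $f$ and Carlson's bound \eqref{0.6} is satisfied with the same constant $b$. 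Condition (iii) of Theorem \ref{the:1new} for $h(k;\cdot)$ is exactly the new assumption \eqref{ppp45}. Finally, condition (iv) of Theorem \ref{the:1new} for $h(k;\cdot)$ reads $\sum_{N=1}^\infty |N-k|\,|f_N|/N<\infty$; since $|N-k|/N\leqslant 1+|k|$ for every $N\geqslant 1$, this follows from assumption $\mathrm{(i')}$, i.e.\ \eqref{ppp4}.

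Next I would apply Theorem \ref{the:2new} to $h(k;\cdot)$. Its interpolation nodes $N+\ud$ carry the samples $h(k;N+\ud)=(N-k)f_N$, and its pole at $z_\rmp$ carries residue $(z_\rmp-k-\ud)R_\rmp$; substituting these data into \eqref{3.1} and \eqref{3.2} yields exactly \eqref{4.99} and \eqref{4.99.bis}. Nothing in this step forces $k$ to be an integer: the evaluation of the coefficient integral $\frac{1}{\sqrt\pi}\int_{-\infty}^{+\infty} h(k;\rmi y)\,\Gamma(\ud-\rmi y)\,P_n(y)\,\rmd y$ by contour integration along $\cC$ is word for word the one already carried out in the proof of Theorem \ref{the:2new}, the only change being the harmless extra polynomial factor $(z-k-\ud)$ in the estimates controlling the arc at infinity.

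It remains to establish \eqref{4.99.tris}. In the computation just invoked the numbers $c_n^{(\rmp)}(k)$ arise as the coefficients of the $L^2$--convergent expansion $h(k;\rmi y)=\sum_{n=0}^\infty c_n^{(\rmp)}(k)\,\psi_n(y)$ of $h(k;\rmi y)$ on the orthonormal basis $\{\psi_n\}_{n=0}^\infty$ of $L^2(-\infty,+\infty)$. Parseval's identity therefore gives $\sum_{n=0}^\infty |c_n^{(\rmp)}(k)|^2 = \|h(k;\rmi\cdot)\|^2_{L^2(-\infty,+\infty)}$, which is finite by \eqref{ppp45}; the general term of a convergent series tends to zero, and \eqref{4.99.tris} follows. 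The only point that calls for a word of care is the verification in the second paragraph that Carlson's bound and the contour--integral evaluation of the coefficients survive the multiplication by the affine factor $(z-k-\ud)$ for non--integer $k$; once this is granted, there is no genuine obstacle.
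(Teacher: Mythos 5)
Your proposal is correct and takes essentially the same route as the paper: apply Theorem \ref{the:2new} to $h(k;z)$ for real $k$ to get \eqref{4.99} and \eqref{4.99.bis}, then obtain \eqref{4.99.tris} from Parseval's identity for the $L^2$ expansion of $h(k;\rmi y)$ on $\{\psi_n\}$. Your explicit verification that $h(k;\cdot)$ satisfies Carlson's bound and conditions (iii)--(iv) is detail the paper leaves implicit, but it is not a different argument.
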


\begin{proof}
Applying the results of Theorem \ref{the:2new} to the function $h(k;z)$ with $k\in\R$,
formulae \eqref{4.99} and \eqref{4.99.bis} follow immediately from the interpolation formula \eqref{3.1}
and from \eqref{3.2}, respectively. For any $k\in\R$ the $c_n^{(\rmp)}(k)$ represent the coefficients 
of the expansion of $h(k;\rmi y)$ in terms of the basis $\{\psi_n\}_{n=0}^\infty$, i.e.:
\beq
h(k;\rmi y) = \sum_{n=0}^\infty c_{n}^{(\rmp)}(k) \psi_n(y) \qquad (k\in\R),
\label{4.88.bis}
\eeq
the convergence being in the sense of the $L^2$--norm. 
Finally, from expansion \eqref{4.88.bis} we have for $k\in\R$:
$\left\|h(k;\rmi y)\right\|^2 = \sum_{n=0}^\infty \left|c_n^{(\rmp)}(k)\right|^2$,
which implies \eqref{4.99.tris}.
\end{proof}

\subsection{The algorithm for recovering pole location and residue}
\label{subse:algorithm:infinite}

Let us continue to consider a meromorphic function $f(z)$ with one first order
pole in $z=z_\rmp$ with $\Real z_\rmp>0$, whose residue is $R_\rmp\neq 0$.
Moreover, the conditions required by Corollary \ref{cor:1} are assumed to be 
satisfied by $f(z)$ and its associated function $h(k;z)$ ($k\in\R$).
Now, it is convenient to rewrite the coefficients $c_n^{(\rmp)}(k)$, 
given in \eqref{4.99.bis}, as follows:
\begin{align}
& c_n^{(\rmp)}(k) = \oc_n(k)-\left(\zeta_\rmp - k\right)\tau_n \qquad (n\in\N,k\in\R), \label{4.9} \\
\intertext{where}
& \oc_n(k) \doteq 2\sqrt{\pi}\sum_{N=0}^\infty \frac{(-1)^N}{N!}\,(N-k)\,f_N\,
P_n\!\left[-\rmi\left(N+\frac{1}{2}\right)\right], 
\label{4.10a} \\
& \tau_n \doteq 2\sqrt{\pi} \, R_\rmp \, \Gamma\left(\frac{1}{2}-z_\rmp\right)
\,P_n\left(-\rmi z_\rmp\right), 
\label{4.10b} \\
& \zeta_\rmp \doteq z_\rmp-\frac{1}{2}. \label{4.10c}
\end{align}
Note that for every $k\in\R$ the coefficients $\oc_n(k)$ can be computed from the input 
data set $\{f_N\}_{N=0}^\infty$, and, consequently, can be regarded as known, 
whereas the explicit dependence of the coefficients $c_n^{(\rmp)}(k)$ on the unknown
pole is contained only in the second term on the r.h.s. of \eqref{4.9}. \\
Now, Eq. \eqref{4.99.tris} allows us to connect the unknowns $\z_\rmp$ and $R_\rmp$ to
the input data $\{f_N\}$ through the function $\oc_n(k)$. In fact, 
from \eqref{4.99.tris} and \eqref{4.9} we have:
\beq
\lim_{n\to+\infty}\oc_{n}(k) = \lim_{n\to+\infty}\left(-\tau_n\,k+\zeta_\rmp\tau_n\right)
\qquad (k\in\R),
\label{4.131}
\eeq
which shows that, in the limit for $n$ tending to infinity, the coefficients of the form 
$\oc_{n}(k)$, which is linear in $k$, are related to the unknown pole parameters. 
Now, in order to obtain $\oc_{n}(k)$ ($n\in\N,k\in\R$) from the input data, 
it is sufficient to compute the coefficients $\fc_{n,k}$ for any two integer values of $k$,
say $k_1$ and $k_2$, and successively for every $n\in\N$ interpolate linearly $\fc_{n,k_1}$ and
$\fc_{n,k_2}$ to yield
\beq
\oc_{n}(k) = m_n k + q_n \quad\quad (n\in\N, k\in\R).
\label{4.132}
\eeq
In this way, for any $n\in\N$ we can associate the coefficients $m_n$ and $q_n$ with the function 
samples $\{f_N\}$, i.e., for any $n\in\N$:
\beq
\label{4.132bis}
\left\{f_N\right\}_{N=0}^\infty \xrightarrow[\mathrm{Formula}~\eqref{4.10a}]{}
\left\{\fc_{n,k_1},\fc_{n,k_2}\right\} \xrightarrow[\mathrm{~Linear~interpolation~in~} k~]{} 
\left(m_n,q_n\right).
\eeq
It should be recalled that in the current case we are assuming to know 
an infinite number of noiseless input samples $\{f_N\}_{N=0}^\infty$,
which amounts to saying that the calculated coefficients $\fc_{n,k}$ are exact.
As will be discussed in the next section, in practice, when only a finite number of noisy function 
samples is available and, consequently, only an approximation of the coefficients 
$\fc_{n,k}$ is computable, the scheme in \eqref{4.132bis} needs to be generalized. \\
Comparing \eqref{4.131} and \eqref{4.132}, it can be seen that, for finite values of $n$, 
the computed coefficients $m_n$ and $q_n$ can be considered estimates of $(-\tau_n)$ and 
$(\z_\rmp \tau_n)$, respectively (i.e., for $n \gg 1$, $m_n \sim -\tau_n$ and 
$q_n \sim \z_\rmp \tau_n$), which Eq. \eqref{4.131} guarantees to be such that:
\begin{subequations}
\label{4.133}
\begin{align}
& \lim_{n\to+\infty} m_n = -\lim_{n\to+\infty} \tau_n, \label{4.133.a} \\[+4pt]
& \lim_{n\to+\infty} q_n = \z_\rmp \lim_{n\to+\infty} \tau_n. \label{4.133.b}
\end{align}
\end{subequations}
Now, Eqs. \eqref{4.133} guide us to define, for every $n\in\N$, 
the approximate pole position $\z_\rmp(n)$ as
\beq
\z_\rmp (n) \doteq -\frac{q_n}{m_n} \qquad (n\in\N),
\label{4.134}
\eeq 
(in order to avoid proliferation of symbols, we denote the approximate pole position computed 
at a certain value of $n$ by $\zeta_\rmp(n)$, making explicit the dependence on $n$; instead, the 
true pole position is simply denoted by $\zeta_\rmp$. Moreover, for simplicity, we will refer
interchangeably to $z_\rmp$ and $\z_\rmp$ as the pole position).
Finally, Eqs. \eqref{4.133} and \eqref{4.134} guarantee that
\beq
\lim_{n\to+\infty}\z_\rmp (n) = \z_\rmp,
\label{4.135}
\eeq
which, explicitly, reads:
\begin{subequations}
\label{4.136} 
\begin{align}
& \Real\z_\rmp = -\lim_{n\to+\infty}
\frac{\Real q_n \Real m_n + \Imag q_n \Imag m_n}{|m_n|^2}, \label{4.136a} \\[+6pt]
& \Imag\z_\rmp = -\lim_{n\to+\infty}
\frac{\Imag q_n \Real m_n - \Real q_n \Imag m_n}{|m_n|^2}. \label{4.136b}
\end{align}
\end{subequations}
Once $\zeta_\rmp$ has been recovered (and, accordingly, also $z_\rmp$ by formula
\eqref{4.10c}), also the residue can be readily recovered from the data.
In fact, for every $n\in\N$ we can define the approximate residue $R_\rmp(n)$ as
\beq
\label{4.137}
R_\rmp(n) \doteq -\frac{m_n}{2\sqrt{\pi}\,\Gamma\left(\frac{1}{2}-z_\rmp\right)
\,P_n\left(-\rmi z_\rmp\right)} \qquad (n\in\N).
\eeq
Finally, Eqs. \eqref{4.10b}, \eqref{4.133.a} and \eqref{4.137} allow us to state
\beq
\lim_{n\to+\infty} R_\rmp(n) = -\frac{1}{2\sqrt{\pi}\,\Gamma(\ud-z_\rmp)}\lim_{n\to+\infty}
\frac{m_n}{P_n(-\rmi z_\rmp)} = R_\rmp.
\label{4.138}
\eeq

\section{Consistency relations and the algorithm for pole recovery: case
of input data made of a finite number of noisy samples}
\label{se:finite}

In practice, actual data handling requires the analysis of more
realistic situations in which the input data set is made of a
finite number of noisy data: the data set now is
$\{f_N^{(\varepsilon)}\}_{N=0}^{N_0}$, where $\varepsilon$
characterizes a bound on the noise that will be specified below.
Various models of noise are actually possible.
Since in our case the data $f_N$ are required to vanish as $N\rightarrow +\infty$,
we assume a noise model such that the relative error remains bounded, namely, we write:
$f_N^{(\varepsilon)}= (1+\nu_N^{(\varepsilon)})f_N$, where $\nu_N^{(\varepsilon)}$
denotes a noise term such that $\left|\nu_N^{(\varepsilon)}\right|\leqslant\varepsilon$.
It follows that:
$\left|(f_N^{(\varepsilon)}-f_N)/f_N\right|\leqslant\varepsilon$,
$f_N\neq 0$, $\varepsilon > 0$ constant; evidently, if
$f_N=0$ the relative error becomes meaningless, so in
this particular case we simply assume that
$\lim_{\varepsilon\rightarrow 0} \left|f_N^{(\varepsilon)}\right|=0$.

\subsection{Algorithm for recovering pole location and residue}
\label{subse:algorithm:finite}

When $\varepsilon>0$ and $N_0<\infty$, the coefficients
$\oc_{n}(k)$ in formula \eqref{4.10a} can be computed
only approximately. Then, for fixed values of $\varepsilon$ and
$N_0$, we can define for any $n\in\N$ and $k\in\R$ the following
approximate coefficients:
\beq
\oc_{n}^{\,(\varepsilon,N_0)}(k) \doteq 
2\sqrt{\pi} \sum_{N=0}^{N_0} \frac{(-1)^N}{N!} (N-k)
f_N^{(\varepsilon)} \, P_n\left[-\rmi\left(N+\frac{1}{2}\right)\right]
\quad (\varepsilon>0, N_0<\infty).
\label{4.10abis}
\eeq
Evidently, $\oc_{n}^{\,(0,\infty)}(k)\equiv\oc_{n}(k)$.
We can prove the following lemma.

\begin{lemma}
\label{lem:1}
For every $n\in\N$ and $k\in\R$, the following statement holds:
\beq
\lim_{\substack{{N_0\rightarrow+\infty}\\{\varepsilon\rightarrow 0}}}
\oc_{n}^{\,(\varepsilon,N_0)}(k) = \oc_{n}^{\,(0,\infty)}(k)=\oc_{n}(k).
\label{D5.1}
\eeq
\end{lemma}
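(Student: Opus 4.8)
The plan is to control the difference $\oc_n^{(\varepsilon,N_0)}(k)-\oc_n(k)$ for a \emph{fixed} pair $(n,k)$ by splitting it into a truncation part and a noise part. Writing $f_N^{(\varepsilon)}=f_N+(f_N^{(\varepsilon)}-f_N)$ and comparing \eqref{4.10abis} with \eqref{4.10a}, one gets
\begin{equation*}
\begin{split}
\oc_n^{(\varepsilon,N_0)}(k)-\oc_n(k)={}
&2\sqrt{\pi}\sum_{N=0}^{N_0}\frac{(-1)^N}{N!}(N-k)\bigl(f_N^{(\varepsilon)}-f_N\bigr)\,P_n\!\left[-\rmi\left(N+\ud\right)\right]\\
&-2\sqrt{\pi}\sum_{N=N_0+1}^{\infty}\frac{(-1)^N}{N!}(N-k)\,f_N\,P_n\!\left[-\rmi\left(N+\ud\right)\right].
\end{split}
\end{equation*}
It then suffices to show that the first (noise) sum tends to $0$ as $\varepsilon\to0$, uniformly in $N_0$, and that the second (truncation) sum tends to $0$ as $N_0\to\infty$, uniformly in $\varepsilon$; the joint limit \eqref{D5.1} follows immediately.

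The key analytic input is the absolute convergence of the majorant series
\[
S_n(k)\doteq\sum_{N=0}^{\infty}\frac{|N-k|}{N!}\,|f_N|\,\bigl|P_n\!\left[-\rmi\left(N+\ud\right)\right]\bigr|.
\]
Since $P_n=P_n^{(1/2)}$ is a polynomial of \emph{fixed} degree $n$, one has $\bigl|P_n[-\rmi(N+\ud)]\bigr|\leqslant C_n(N+\ud)^n$ for all $N$, so the general term of $S_n(k)$ is bounded by $C_n'\,(N^{n+1}/N!)\,|f_N|$; because $N^{n+1}/N!$ is bounded in $N$, comparison with $\sum_N|f_N|<\infty$ — condition (i$'$) of Theorem~\ref{the:3new}, in force here via Corollary~\ref{cor:1} — gives $S_n(k)<\infty$. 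Consequently the truncation sum is majorized by $2\sqrt{\pi}\sum_{N>N_0}\frac{|N-k|}{N!}|f_N|\,\bigl|P_n[-\rmi(N+\ud)]\bigr|$, the tail of the convergent series $S_n(k)$, hence $\to0$ as $N_0\to\infty$ with no dependence on $\varepsilon$. For the noise sum, the relative-error model gives $|f_N^{(\varepsilon)}-f_N|=|\nu_N^{(\varepsilon)}|\,|f_N|\leqslant\varepsilon|f_N|$ whenever $f_N\neq0$, so that sum is bounded by $2\sqrt{\pi}\,\varepsilon\,S_n(k)$ — a bound independent of $N_0$ — and hence vanishes as $\varepsilon\to0$. (In the degenerate case $f_N=0$ one only knows $|f_N^{(\varepsilon)}|\to0$; this is harmless as long as $|f_N^{(\varepsilon)}|=O(\varepsilon)$ uniformly over such indices — automatic when there are finitely many — since the corresponding terms are then dominated by $\varepsilon\sum_N\frac{|N-k|}{N!}\bigl|P_n[-\rmi(N+\ud)]\bigr|$, again finite by the factorial decay.)

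I would then assemble the $\varepsilon$--$\delta$ argument: given $\delta>0$, pick $M$ so that the tail of $S_n(k)$ beyond $M$ is below $\delta/(4\sqrt{\pi})$, and then $\varepsilon_0$ so that $2\sqrt{\pi}\,\varepsilon_0\,S_n(k)<\delta/2$; for every $N_0\geqslant M$ and $0<\varepsilon\leqslant\varepsilon_0$ both sums are then $<\delta/2$ in modulus, which is exactly \eqref{D5.1}. The one genuinely delicate point — the main obstacle — is that \eqref{D5.1} is a true two-variable limit rather than an iterated one, so the estimates must be uniform: the truncation bound uniformly in $\varepsilon$ and the noise bound uniformly in $N_0$. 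It is precisely the absolute convergence of $S_n(k)$ (equivalently, that $\deg P_n=n$ is fixed while $1/N!$ decays faster than any power), together with the multiplicative form $|f_N^{(\varepsilon)}-f_N|\leqslant\varepsilon|f_N|$ of the noise, that makes this uniformity available; everything else is bookkeeping.
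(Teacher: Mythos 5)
Your proof is correct and takes essentially the same route as the paper's: the identical split of $\oc_n^{(\varepsilon,N_0)}(k)-\oc_n(k)$ into a truncation tail and a noise sum, both controlled by the fixed degree of $P_n$, the factorial decay $1/N!$, and condition (i$'$). The only differences are presentational — you establish absolute convergence of the majorant $S_n(k)$ directly and make explicit the uniformity needed for the joint two-variable limit (and the caveat about samples with $f_N=0$), whereas the paper infers finiteness of $\oc_n(k)$ from \eqref{4.9} and treats the two limits more informally.
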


\begin{proof}
Consider
\beq
\begin{split}
& \frac{\oc_{n}^{\,(0,\infty)}(k)-\oc_{n}^{\,(\varepsilon,N_0)}(k)}{2\sqrt{\pi}}=
\left\{ \sum_{N=0}^{N_0}\frac{(-1)^N}{N!} (N-k)
\left[f_N-f_N^{(\varepsilon)}\right] \, P_n\left[-\rmi\left(N+\frac{1}{2}\right)\right] \right. \\
&\qquad
\left.+\sum_{N=N_0+1}^\infty\frac{(-1)^N}{N!} (N-k) f_N \, 
P_n\left[-\rmi\left(N+\frac{1}{2}\right)\right]\right\}.
\label{D5.2}
\end{split}
\eeq
We know that the series
$2\sqrt{\pi}\sum_{N=0}^\infty\frac{(-1)^N}{N!}(N-k)f_N P_n\left[-\rmi\left(N+\frac{1}{2}\right)\right]$
converges to $\oc_{n}^{\,(0,\infty)}(k)$, which is finite
for every finite $n\in\N$ and $k\in\R$.
The latter statement follows from formula \eqref{4.9}: in fact,
$\left|c_{n}^{\,(\rmp)}(k)\right|<\infty$ since they are the coefficients of
the expansion of $h(k;\rmi y)$, and
$|(z_\rmp-k-\frac{1}{2}) R_\rmp \Gamma(\frac{1}{2}-z_\rmp)P_n(-\rmi z_\rmp)|<\infty$ for $n\in\N$, 
$k\in\R$ (of course, $z_\rmp \neq N+\frac{1}{2}$, which merely means that the
pole cannot be located on the input datum). It follows that the
second sum on the r.h.s. of \eqref{D5.2} vanishes as
$N_0\to+\infty$. Concerning the first term, we may write the inequality:
\beq
\begin{split}
& \left| \sum_{N=0}^{N_0}\frac{(-1)^N}{N!} (N-k)
\left[f_N-f_N^{(\varepsilon)}\right] \,
P_n\left[-\rmi\left(N+\frac{1}{2}\right)\right] \right| \\
&\qquad
\leqslant\varepsilon\sum_{N=0}^{N_0}\frac{|N-k|}{N!}\left|f_N\right|\,
\left|P_n\left[-\rmi\left(N+\frac{1}{2}\right)\right]\right|,
\end{split}
\label{D5.3}
\eeq
where the assumption made on the noise has been
used. Next, by rewriting the Pollaczek polynomials
$P_n\left[-\rmi\left(N+\frac{1}{2}\right)\right]$ as
\beq
P_n\left[-\rmi\left(N+\frac{1}{2}\right)\right]= \sum_{j=0}^n
p_j^{(n)}\left(N+\frac{1}{2}\right)^j,
\label{D5.4}
\eeq
and
substituting this expression in the r.h.s. of inequality \eqref{D5.3}, we obtain
\beq
\varepsilon\sum_{N=0}^{N_0}\frac{|N-k|}{N!}\left|f_N\right|
\left[\sum_{j=0}^n \left|p_j^{(n)}\right| \,
\left(N+\frac{1}{2}\right)^j\right].
\label{D5.5}
\eeq
Next, we compute the limit for $N_0\to+\infty$.
Since the sum $\sum_{j=0}^n p_j^{(n)}\left(N+\frac{1}{2}\right)^j$ is finite, 
the order of the sums in \eqref{D5.5} may be exchanged:
\beq
\varepsilon\sum_{j=0}^n \left|p_j^{(n)}\right| \sum_{N=0}^{\infty}
\frac{|N-k|}{N!}\left|f_N\right|
\left(N+\frac{1}{2}\right)^j.
\label{D5.6}
\eeq
The inner series
$\sum_{N=0}^\infty\frac{|N-k|}{N!}|f_N|(N+\frac{1}{2})^j$
is evidently convergent in view of assumption ($\mathrm{i'}$) of Theorem \ref{the:3new}, and
therefore, the expression in \eqref{D5.6} vanishes for $\varepsilon\to 0$.
Statement \eqref{D5.2} is thus proved. 
\end{proof}

Let us now tackle the problem of recovering, in practice, the position of the pole.
For this purpose we follow a procedure analogous to that described in 
Subsection \ref{subse:algorithm:infinite},
using now the computable coefficients $\oc_{n}^{\,(\varepsilon,N_0)}(k)$ 
instead of the (exact but unknown) coefficients $\oc_{n}(k)$. 
Then, for given fixed values of $\varepsilon>0$ and $N_0<\infty$, 
the actual implementation is realized by the following procedure:

\begin{itemize}
\item[\bf 1.] For every $n\in\N$ compute by means of formula \eqref{4.10abis}
the coefficients $\oc_{n}^{\,(\varepsilon,N_0)}(k)$ for some integral values of $k$, say $k=0,\ldots,k_*$.
\item[\bf 2.] Since $\oc_{n}^{\,(\varepsilon,N_0)}(k)$ is a linear function of $k$ (see \eqref{4.10abis}), 
we associate by a linear regression procedure (in $k$) the set of computed ``noisy'' coefficients 
$\{\oc_{n,k}^{\,(\varepsilon,N_0)}\}_{k=0}^{k_*}$ with the linear form
\beq
\oc_{n}^{\,(\varepsilon,N_0)}(k) = m_n^{\,(\varepsilon,N_0)}\,k + q_n^{\,(\varepsilon,N_0)}
\qquad (n\in\N,k\in\R).
\label{Q5.1}
\eeq
For every $n\in\N$, we therefore link the coefficients $m_n^{\,(\varepsilon,N_0)}$ 
and $q_n^{\,(\varepsilon,N_0)}$ to the noisy input data $\{f_N^{\,(\varepsilon)}\}$
according to the scheme (see also \eqref{4.132bis}):
\beq
\label{Q5.1bis}
\left\{f_N^{\,(\varepsilon)}\right\}_{\!N=0}^{\!N_0} \xrightarrow[\mathrm{Formula} ~\eqref{4.10abis}]{}
\left\{\fc_{n,k}^{\,(\varepsilon,N_0)}\right\}_{\!k=0}^{\!k_*}
\xrightarrow[\mathrm{~Linear~regression}~]{}
\left(m_n^{\,(\varepsilon,N_0)},q_n^{\,(\varepsilon,N_0)}\right).
\nonumber
\eeq
\item[\bf 3.] For every $n\in\N$, compute the function
$\z_\rmp^{\,(\varepsilon,N_0)}(n)$ as (see also \eqref{4.134})
\beq
\z_\rmp^{\,(\varepsilon,N_0)}(n) \doteq -\frac{q_n^{\,(\varepsilon,N_0)}}{m_n^{\,(\varepsilon,N_0)}}.
\label{Q5.2}
\eeq
Now, Lemma \ref{lem:1} informs us that:
\beq
\oc_n^{\,(\varepsilon,N_0)}(k)
\xrightarrow[\substack{{N_0\rightarrow+\infty}\\{\varepsilon\rightarrow 0}}] {}   
\oc_n^{\,(0,\infty)}(k) \equiv \oc_n(k)
\qquad (n\in\N,k\in\R),
\label{Q5.6}
\eeq
and, consequently, we have for every $n\in\N$ (see also \eqref{4.132}):
\begin{subequations}
\label{Q5.7}
\begin{align}
& m_n^{\,(\varepsilon,N_0)}
\xrightarrow[\substack{{N_0\rightarrow+\infty}\\{\varepsilon\rightarrow 0}}] {}   
m_n^{\,(0,\infty)} \equiv m_n, \label{Q5.7a} \\[+5pt]
& q_n^{\,(\varepsilon,N_0)}
\xrightarrow[\substack{{N_0\rightarrow+\infty}\\{\varepsilon\rightarrow 0}}] {}   
q_n^{\,(0,\infty)} \equiv q_n. \label{Q5.7b}
\end{align}
\end{subequations}
Accordingly, from formulae \eqref{4.134} and \eqref{Q5.2} it follows
\beq
\lim_{\substack{{N_0\rightarrow+\infty}\\{\varepsilon\rightarrow 0}}}
\z_\rmp^{\,(\varepsilon,N_0)}(n) = \z_\rmp^{\,(0,\infty)}(n) = \z_\rmp(n) \qquad (n\in\N).
\label{L1}
\eeq
\item[\bf 4.] Finally, in view of formula \eqref{4.135}, 
we obtain the formula for recovering the position of the pole:
\beq
\lim_{n\to+\infty}\left(
\lim_{\substack{{N_0\rightarrow+\infty}\\{\varepsilon\rightarrow 0}}}
\z_\rmp^{\,(\varepsilon,N_0)}(n)\right) = \z_\rmp,
\label{L2}
\eeq
or, equivalently, defining $z_\rmp^{\,(\varepsilon,N_0)}(n)\doteq\z_\rmp^{\,(\varepsilon,N_0)}(n)+\ud$,
and in view of \eqref{4.10c}:
\beq
\lim_{n\to+\infty}\left(
\lim_{\substack{{N_0\rightarrow+\infty}\\{\varepsilon\rightarrow 0}}}
z_\rmp^{\,(\varepsilon,N_0)}(n)\right) = z_\rmp.
\label{L2.bis}
\eeq
\end{itemize}

\vskip 0.25 cm

For its actual implementation, formula \eqref{L2.bis} deserves a deeper analysis.
To begin with, assume (unrealistically) that we can perform the inner limit for $N_0\to+\infty$ and
$\varepsilon\to 0$ to get the function $z_\rmp^{\,(0,\infty)}(n)$.
Now, the outer limit in \eqref{L2.bis}, which is a direct consequence of limit \eqref{4.99.tris},
tells us that $z_\rmp^{\,(0,\infty)}(n)$ is expected to become close to $z_\rmp$ 
from a certain value of $n$ on (say, $n>n_\mathrm{min}$), in correspondence of the values
of $n$ for which $c_n^{(\rmp)}(k)$ becomes nearly zero (see \eqref{4.99.tris}).
This means that, in practice, in the plot of $z_\rmp^{\,(0,\infty)}(n)$ against $n$
we should be able to identify a ``\emph{range of convergence}'', that is, a set of $n$--values 
where $z_\rmp^{\,(0,\infty)}(n)$ is nearly constant (actually, since in general $z_\rmp\in\C$,
two ``\emph{ranges of convergence}'', one for the real and one for the imaginary part, separately).
More precisely, for an arbitrary constant $\eta>0$, we expect to find an integer 
$n_\mathrm{min}=n_\mathrm{min}(\varepsilon,N_0;\eta)$ such that:
\beq
\left|z_\rmp^{(0,\infty)}(n)-z_\rmp\right|< \eta
\qquad \mathrm{for}~~n\geqslant n_\mathrm{min}(0,\infty;\eta).
\label{pullo.0}
\eeq
Notice that, in this case with $\varepsilon=0$ and $N_0=\infty$ and in view of \eqref{4.99.tris}, 
the \emph{range of convergence} is expected to be superiorly unlimited.

\vskip 0.3 cm

Now, in a realistic situation $\varepsilon$ cannot be null,
$N_0$ is necessarily finite, and both must be regarded as fixed.
Therefore the inner limit in \eqref{L2.bis} cannot be actually performed.
This fact has consequences on the algorithm in view of the fact that
the two limits in \eqref{L2.bis} cannot be interchanged.
In order to see this, let us define, in close analogy with formula \eqref{4.9}
(see also \eqref{4.99.bis}), the following approximate coefficients:
\beq
c_{n}^{\,(\rmp;\varepsilon,N_0)}(k)
\doteq\oc_{n}^{\,(\varepsilon,N_0)}(k)-\left(\zeta_\rmp-k\right)\tau_n  \qquad 
(n\in\N,k\in\R;\varepsilon>0, N_0<\infty),
\label{pullo.1}
\eeq
where $\oc_{n}^{\,(\varepsilon,N_0)}(k)$, $\tau_n$ and $\z_\rmp$ 
are given by \eqref{4.10abis}, \eqref{4.10b} and \eqref{4.10c}, respectively.
Comparing \eqref{pullo.1} with \eqref{4.99.bis},
and by Lemma \ref{lem:1}, it follows: $c_{n}^{(\rmp;0,\infty)}(k)=c_{n}^{(\rmp)}(k)$. 
Now, we have:
\beq
\lim_{n\to+\infty}\left(
\lim_{\substack{{N_0\rightarrow+\infty}\\{\varepsilon\rightarrow 0}}} 
c_{n}^{\,(\rmp;\varepsilon,N_0)}(k)
\right) 
\neq
\lim_{\substack{{N_0\rightarrow+\infty}\\{\varepsilon\rightarrow 0}}}
\left(\lim_{n\to+\infty}c_{n}^{\,(\rmp;\varepsilon,N_0)}(k)\right).
\label{pullo.2}
\eeq
In fact, the l.h.s. of \eqref{pullo.2} is null since $c_{n}^{(\rmp;0,\infty)}(k)$ are
the coefficients of expansion \eqref{4.88.bis}. Instead, for what concerns the r.h.s. 
of \eqref{pullo.2} we have, by using the asymptotic formulae \eqref{B3} and \eqref{B33} 
for the Pollaczek polynomials, with $N_0<\infty$, $\varepsilon>0$:
\beq
\begin{split}
& \left|c_{n}^{\,(\rmp;\varepsilon,N_0)}(k)\right| \\
&\staccrel{\sds\sim}{n\gg 1}
2\sqrt{\pi}\left|(-1)^{N_0}\frac{(N_0-k)\,f_{N_0}^{(\varepsilon)}}{(N_0!)^2}\,(2n)^{N_0}
-(\z_\rmp-k)R_\rmp\frac{\Gamma(\ud-z_\rmp)}{\Gamma(\ud+z_\rmp)}\,(2n)^{z_\rmp-1/2}\right|,
\end{split}
\label{pullo.3}
\eeq
which tends to infinity as $n\to+\infty$. Now, since Eq. \eqref{L2.bis} is a direct 
consequence of the fact that the l.h.s. of \eqref{pullo.2} is null, 
then formula \eqref{pullo.2} does not allow the limits in \eqref{L2.bis} to be switched.

Assume now (more realistically) that $\varepsilon$ and $N_0$ take on the fixed values
$\overline{\varepsilon}$ and $\overline{N}_0$, respectively: i.e.,
$\varepsilon\equiv\overline{\varepsilon}$ and $N_0\equiv\overline{N}_0$. 
In view of \eqref{L2.bis} we have therefore to deal with the following limit:
$\lim_{n\to+\infty}z_\rmp^{\,(\overline{\varepsilon},\overline{N}_0)}(n)$.

Immediate consequence of the divergence of 
$c_{n}^{(\rmp;\overline{\varepsilon},\overline{N}_0)}(k)$
as $n\to+\infty$ (with $\overline{\varepsilon}>0$ and $\overline{N}_0<\infty$) 
is the divergence of $z_\rmp^{(\overline{\varepsilon},\overline{N}_0)}(n)$ from 
$z_\rmp$ as $n\to+\infty$ (see \eqref{L2.bis}).
Therefore, in the actual analysis of $z_\rmp^{(\overline{\varepsilon},\overline{N}_0)}(n)$,
$n$ cannot be pushed to infinity, but must be stopped before this divergence sets in. 
However, if $\overline{\varepsilon}$ is ``sufficiently small'' and 
$\overline{N}_0$ is ``sufficiently large'', 
then, according to formula \eqref{L1}, $z_\rmp^{\,(\overline{\varepsilon},\overline{N}_0)}(n)$ 
(at fixed $n$) is expected to be close to $z_\rmp^{\,(0,\infty)}(n)$, and consequently, for not 
too large values of $n$, say $n<n_\mathrm{max}$ (and with $n>n_\mathrm{min}$), 
we will have also $z_\rmp^{\,(\overline{\varepsilon},\overline{N}_0)}(n)\simeq z_\rmp$.
Therefore, in the plot of $z_\rmp^{\,(\overline{\varepsilon},\overline{N}_0)}(n)$
against $n$ we aim at identifying a range of $n$--values (the
``\emph{range of convergence}''), now limited superiorly, where 
$z_\rmp^{\,(\overline{\varepsilon},\overline{N}_0)}(n)$ is nearly constant.
More precisely, given an arbitrary constant $\eta>0$ 
(whose value determines the allowed range of variability of the estimate),
our goal is to find two integers 
$n_\mathrm{min}(\varepsilon,N_0;\eta)$ and $n_\mathrm{max}(\varepsilon,N_0;\eta)$
and a value $z^{(\overline{\varepsilon},\overline{N}_0)}_\rmp$, 
which represents the estimate of the pole position at the given values 
$\varepsilon=\overline{\varepsilon}$ and $N_0=\overline{N}_0$, such that:
\beq
\left|z_\rmp^{(\overline{\varepsilon},\overline{N}_0)}(n)-
z^{(\overline{\varepsilon},\overline{N}_0)}_\rmp\right|< \eta
\qquad \mathrm{for}~~n_\mathrm{min}(\varepsilon,N_0;\eta)\leqslant n 
\leqslant n_\mathrm{max}(\varepsilon,N_0;\eta).
\label{pullo.4}
\eeq
Since $z_\rmp^{(\overline{\varepsilon},\overline{N}_0)}(n)$ may vary 
significantly within the $2\eta$--\emph{wide interval} defined in \eqref{pullo.4}, 
in the actual numerical implementation of the 
algorithm (see Section \ref{se:numerical}), once the range $[n_\mathrm{min},n_\mathrm{max}]$ 
has been detected (if any), it can be taken as estimate $z^{(\overline{\varepsilon},\overline{N}_0)}_\rmp$ 
of the pole position the sample mean of the values of 
$z_\rmp^{\,(\overline{\varepsilon},\overline{N}_0)}(n)$ within this range, 
while the sample standard deviation can be used as an estimate of the uncertainty
\footnote{For simplicity, we cease henceforth to use the notation $\overline{\varepsilon},\overline{N}_0$
that we adopted in this subsection to emphasize the case when ${\varepsilon},{N}_0$ take on fixed values.}.
Finally, in view of the arguments discussed above (and comparing \eqref{pullo.4} with \eqref{pullo.0})
it is worth observing that $\lim_{\substack{{N_0\rightarrow+\infty}\\{\varepsilon\rightarrow 0}}}
n_\mathrm{max}(\varepsilon,N_0;\eta) = +\infty$.

\vskip 0.3 cm

We can now move on to consider the problem of evaluating the residue $R_\rmp$. 
Inspired by \eqref{4.137} and \eqref{Q5.2} (and recalling that
$z_\rmp^{\,(\varepsilon,N_0)}(n)=\z_\rmp^{\,(\varepsilon,N_0)}(n)+\ud$),
we compute, for every $n\in\N$, the function
\beq
R_\rmp^{\,(\varepsilon,N_0)}(n) \doteq 
-\frac{m_n^{\,(\varepsilon,N_0)}}{2\sqrt{\pi}\,
\Gamma\left(\frac{1}{2}-z_\rmp^{\,(\varepsilon,N_0)}\right)
\,P_n\left(-\rmi z_\rmp^{\,(\varepsilon,N_0)}\right)} \qquad (n\in\N).
\label{L4}
\eeq 
Then, from \eqref{4.138}, \eqref{Q5.7a}, and \eqref{L2.bis} we have:
\beq
\lim_{n\to+\infty}\left(
\lim_{\substack{{N_0\rightarrow+\infty}\\{\varepsilon\rightarrow 0}}}
R_\rmp^{\,(\varepsilon,N_0)}(n)\right) = R_\rmp.
\label{L5}
\eeq
The structure of Eq. \eqref{L5} is equal to that of Eq. \eqref{L2.bis}.
Then, the arguments used earlier for estimating the pole position by the
analysis of Eq. \eqref{L2.bis} can be used similarly for estimating the residue 
by means of Eq. \eqref{L5}.
Therefore, if $\varepsilon$ is ``sufficiently small'' and $N_0$ is ``sufficiently large''
$R_\rmp^{\,(\varepsilon,N_0)}(n)$ is expected to show, as a function of $n$,
a ``\emph{range of convergence}'' within which $R_\rmp^{\,(\varepsilon,N_0)}(n)$
is nearly constant. The estimate $R_\rmp^{\,(\varepsilon,N_0)}$ of the residue at the
given fixed values of $\varepsilon$ and $N_0$ can then be obtained
as the sample mean of $R_\rmp^{\,(\varepsilon,N_0)}(n)$ within this range of $n$--values,
in a way completely similar to what done in the case of the position of the pole.

\vskip 0.3cm

Now, the arguments given sofar are mainly qualitative, and therefore, 
the following problem emerges. \\[+5pt]
\noindent
\textbf{Problem.} \ How can the degree of approximation to $z_\rmp$ and $R_\rmp$ by the estimates
$z_\rmp^{\,(\varepsilon,N_0)}$ and $R_\rmp^{\,(\varepsilon,N_0)}$ be evaluated? \\[+5pt]
The discussion of this problem is given in the next subsection.

\subsection{Consistency relations for a meromorphic function, and measure of the degree
of approximation of the estimates to pole position and residue}
\label{subse:consistency:finite}

Referring to definition \eqref{pullo.1} of the approximate coefficients 
$c_{n,k}^{\,(\rmp;\varepsilon,N_0)}$, we can state the following auxiliary lemma.

\begin{lemma}
\label{lem:2}
For every fixed $k\in\N$, the following statements hold:
\begin{flalign}
(\mathrm{i})  &&&
\sum_{n=0}^\infty \left|c_{n,k}^{\,(\rmp;0,\infty)}\right|^2 = \|h_k(\rmi y)\|^2_{L^2(-\infty,+\infty)}
\doteq C_k \quad (C_k = \mathrm{const.}).\qquad\quad \label{55.3} \\
(\mathrm{ii}) &&&
\sum_{n=0}^\infty \left|c_{n,k}^{\,(\rmp;\varepsilon,N_0)}\right|^2=+\infty 
\qquad (\varepsilon>0, N_0 < \infty). \label{55.4} \\
(\mathrm{iii})&&&
\lim_{\substack{{N_0\rightarrow+\infty} \\ {\varepsilon\rightarrow 0}}}
c_{n,k}^{\,(\rmp;\varepsilon,N_0)}= c_{n,k}^{\,(\rmp;0,\infty)}= c_{n,k}^{(\rmp)} 
\qquad (n\in\N). \label{55.5}
\end{flalign}
{\rm(iv)} Let $m^{(\rmp)}(\varepsilon,N_0;k)$ be defined as
\beq
m^{(\rmp)}(\varepsilon,N_0;k) \doteq \max\left\{m\in\N :
\sum_{n=0}^{m} \left|c_{n,k}^{\,(\rmp;\varepsilon,N_0)}\right|^2
\leqslant C_k\right\},
\label{55.6}
\eeq
then
\beq
\lim_{\substack{{N_0\rightarrow+\infty} \\ {\varepsilon\rightarrow 0}}}
m^{(\rmp)}(\varepsilon,N_0;k)=+\infty \qquad (k\in\N).
\label{55.7}
\eeq
{\rm (v)} The sum
\beq
M_k^{(\rmp;\varepsilon,N_0)}(m) \doteq \sum_{n=0}^m
\left|c_{n,k}^{\,(\rmp;\varepsilon,N_0)}\right|^2 \qquad (k\in\N)
\label{55.8}
\eeq
satisfies the following properties:
\begin{enumerate}\addtolength{\itemsep}{0.4\baselineskip}
\item[\rm (v.a)] it does not decrease for increasing values of $m$;
\item[\rm (v.b)] for every $k\in\N$ the following asymptotic relationship holds:
\beq
\begin{split}
M_k^{(\rmp;\varepsilon,N_0)}(m) \geqslant
\left|c_{m,k}^{\,(\rmp;\varepsilon,N_0)}\right|^2
\staccrel{\sds\sim}{m\gg 1} A_k^{(\varepsilon,N_0)}
\cdot \,
& (2m)^{2\max\left\{N_0,\Real z_\rmp-\frac{1}{2}\right\}} \\
&
(\varepsilon>0 ~\mathrm{and}~ N_0<\infty ~\mathrm{fixed}),
\label{55.9}
\end{split}
\eeq
$A_k^{(\varepsilon,N_0)}$ being a quantity independent of $m$.
\end{enumerate}
\end{lemma}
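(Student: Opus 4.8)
The plan is to dispatch the five items in the order (i), (iii), (ii), (iv), (v.a)--(v.b): item (iii) feeds item (iv), item (ii) is what makes the maximum in \eqref{55.6} meaningful, and the asymptotic estimate \eqref{pullo.3} already established above will carry items (ii) and (v.b). For (i), recall that by Theorem \ref{the:3new} (equivalently, Corollary \ref{cor:1} restricted to $k\in\N$) the numbers $c_{n,k}^{\,(\rmp;0,\infty)}=c_{n,k}^{(\rmp)}$ are precisely the coefficients of the $L^2$--expansion \eqref{4.8.bis} of $h(k;\rmi y)=h_k(\rmi y)$ on the orthonormal basis $\{\psi_n\}_{n=0}^\infty$ of $L^2(-\infty,+\infty)$; Parseval's identity then gives $\sum_{n=0}^\infty|c_{n,k}^{\,(\rmp;0,\infty)}|^2=\|h_k(\rmi y)\|^2=C_k<\infty$, finiteness being hypothesis $\mathrm{(ii')}$. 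For (iii), write $c_{n,k}^{\,(\rmp;\varepsilon,N_0)}=\oc_n^{\,(\varepsilon,N_0)}(k)-(\z_\rmp-k)\tau_n$ as in \eqref{pullo.1} and compare with $c_{n,k}^{(\rmp)}=\oc_n(k)-(\z_\rmp-k)\tau_n$ from \eqref{4.9}; since $\tau_n$ and $\z_\rmp$ do not depend on $(\varepsilon,N_0)$, the claim reduces to $\oc_n^{\,(\varepsilon,N_0)}(k)\to\oc_n(k)$, which is exactly Lemma \ref{lem:1}.

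For (ii), the decisive ingredient is the estimate \eqref{pullo.3}, valid for fixed $\varepsilon>0$, $N_0<\infty$: it shows that $|c_{n,k}^{\,(\rmp;\varepsilon,N_0)}|$ is of order $(2n)^{\max\{N_0,\,\Real z_\rmp-1/2\}}$ as $n\to+\infty$. Since $N_0\in\N$ this exponent is $\geqslant 0$, and the two contributions in \eqref{pullo.3} cannot cancel identically: they carry the powers $(2n)^{N_0}$ and $(2n)^{z_\rmp-1/2}$, which differ unless $N_0=\Real z_\rmp-\tfrac12$, and in that borderline case the second carries the oscillatory factor $(2n)^{\rmi\Imag z_\rmp}$ with $\Imag z_\rmp\neq0$ (the alternative $z_\rmp=N_0+\tfrac12$ being excluded since the pole cannot lie on a datum). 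Hence $\limsup_{n\to+\infty}|c_{n,k}^{\,(\rmp;\varepsilon,N_0)}|>0$, the general term of the series does not tend to $0$, and so $\sum_{n=0}^\infty|c_{n,k}^{\,(\rmp;\varepsilon,N_0)}|^2=+\infty$. As a by-product, the partial sums $M_k^{(\rmp;\varepsilon,N_0)}(m)$ are non-decreasing and unbounded, so for $\varepsilon$ small enough the set in \eqref{55.6} is non-empty and bounded above, i.e.\ $m^{(\rmp)}(\varepsilon,N_0;k)$ is well defined.

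For (iv), fix $M\in\N$. First observe that $\sum_{n=0}^M|c_{n,k}^{(\rmp)}|^2<C_k$ strictly: an equality would force $h_k(\rmi y)$ to be a finite linear combination of the $\psi_n$, i.e.\ $h_k(\rmi y)=\frac{1}{\sqrt\pi}\Gamma(\tfrac12+\rmi y)\,Q(y)$ for some polynomial $Q$, whence $f(z)=\frac{1}{\sqrt\pi}\Gamma(\tfrac12+z)\,Q(-\rmi z)/(z-k-\tfrac12)$ would be holomorphic in $\Real z>0$ except possibly at the half-integer $k+\tfrac12$, contradicting $R_\rmp\neq0$ together with $z_\rmp\neq N+\tfrac12$. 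By (iii), $\sum_{n=0}^M|c_{n,k}^{\,(\rmp;\varepsilon,N_0)}|^2\to\sum_{n=0}^M|c_{n,k}^{(\rmp)}|^2<C_k$ as $N_0\to+\infty,\varepsilon\to0$, so this partial sum is $\leqslant C_k$ for all sufficiently small $\varepsilon$ and large $N_0$; by \eqref{55.6} this gives $m^{(\rmp)}(\varepsilon,N_0;k)\geqslant M$, and since $M$ was arbitrary, \eqref{55.7} follows. Finally, (v.a) is immediate, $M_k^{(\rmp;\varepsilon,N_0)}(m)$ being a partial sum of non-negative terms; in (v.b) the inequality $M_k^{(\rmp;\varepsilon,N_0)}(m)\geqslant|c_{m,k}^{\,(\rmp;\varepsilon,N_0)}|^2$ holds because the right-hand side is the last summand of the left-hand side, while the displayed asymptotics is obtained by squaring \eqref{pullo.3}: the leading power is $(2m)^{2\max\{N_0,\,\Real z_\rmp-1/2\}}$, with coefficient $A_k^{(\varepsilon,N_0)}$ read off from the dominant term of \eqref{pullo.3} (for instance $4\pi(N_0-k)^2|f_{N_0}^{(\varepsilon)}|^2/(N_0!)^4$ when $N_0>\Real z_\rmp-\tfrac12$), manifestly independent of $m$; in the resonant case $N_0=\Real z_\rmp-\tfrac12$ the relation is to be read in the sense of order of magnitude, the bounded oscillatory factor being absorbed.

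The routine steps (Parseval's identity, Lemma \ref{lem:1}, monotonicity of partial sums) require no comment. I expect the only genuine care to be needed in two places: confirming that the two leading terms in \eqref{pullo.3} do not conspire to cancel — which is exactly where $R_\rmp\neq0$ and $z_\rmp\neq N+\tfrac12$ enter — and ruling out the borderline equality $\sum_{n\leqslant M}|c_{n,k}^{(\rmp)}|^2=C_k$ in (iv), for which the meromorphic structure of $f$ (a pole at $z_\rmp\neq N+\tfrac12$ with nonzero residue) is what is actually used.
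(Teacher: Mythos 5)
Your proof is correct, and for items (i), (ii), (iii) and (v) it follows the same route as the paper: Parseval's identity applied to the expansion \eqref{4.88.bis}, Lemma \ref{lem:1} (the pole term $(\z_\rmp-k)\tau_n$ cancelling in the difference), and the asymptotics \eqref{pullo.3} (equivalently \eqref{B3}, \eqref{B33}) for the divergence statements; your extra care in (ii) about a possible cancellation between the $(2n)^{N_0}$ and $(2n)^{z_\rmp-1/2}$ terms, settled by $R_\rmp\neq 0$, $z_\rmp\neq N+\frac12$ and the oscillatory factor $(2n)^{\rmi\Imag z_\rmp}$ in the borderline case, is not in the paper but is consistent with its standing assumptions. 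The only genuine divergence is in (iv): the paper argues by contradiction, setting $m_1=m^{(\rmp)}+1$, assuming $\lim m_1\leqslant m_*(k)<\infty$ and deriving $C_k<\sum_{n=0}^{m_*(k)}|c_{n,k}^{\,(\rmp;0,\infty)}|^2<C_k$, whereas you argue directly: fix $M$, note $\sum_{n=0}^{M}|c_{n,k}^{(\rmp)}|^2<C_k$ strictly, and use (iii) to conclude $m^{(\rmp)}(\varepsilon,N_0;k)\geqslant M$ for $\varepsilon$ small and $N_0$ large. Both versions hinge on the same strictness fact (some coefficient beyond any finite index is nonzero); the paper leaves it implicit, while you justify it by observing that otherwise $h_k(\rmi y)$ would be a finite combination of the $\psi_n$, forcing $f$ to be pole-free off the interpolation nodes and contradicting $R_\rmp\neq 0$ with $z_\rmp\neq N+\frac12$ --- a worthwhile addition, though to be airtight it needs a one-line appeal to uniqueness from boundary values (e.g.\ Schwarz reflection applied to the difference $h(k;z)-\frac{1}{\sqrt\pi}\Gamma(\frac12+z)Q(-\rmi z)$, which vanishes on the imaginary axis) to pass from agreement on $\Real z=0$ to agreement in $\Real z>0$. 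So your argument buys a forward, quantitative version of (iv) at the cost of this small analytic supplement, while the paper's contradiction argument is shorter but glosses over the same point.
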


\begin{proof}
(i) Since $c_{n,k}^{(\rmp;0,\infty)}=c_{n,k}^{(\rmp)}$, the statement follows 
from expansion \eqref{4.88.bis} and Parseval's theorem.
(ii) From the asymptotic expression in \eqref{pullo.3} it follows:
\beq
\left|c_{n,k}^{\,(\rmp;\varepsilon,N_0)}\right|
\staccrel{\sds\sim}{n ~\mathrm{sufficiently ~large}}
A_k^{(\varepsilon,N_0)}\,\cdot (2n)^{\max\{N_0,\Real z_\rmp-1/2\}}
\xrightarrow[n\to+\infty]{}+\infty,
\label{55.15bis}
\eeq
since $A_k^{(\varepsilon,N_0)}>0$, $N_0>0$.
Then, statement (ii) follows.
(iii) Consider the difference
\beq
c_{n,k}^{\,(\rmp;0,\infty)}-c_{n,k}^{\,(\rmp;\varepsilon,N_0)}
=\left(\oc_{n,k}^{\,(0,\infty)}-\oc_{n,k}^{\,(\varepsilon,N_0)}\right).
\label{55.16}
\eeq
For $N_0\to\infty$ and $\varepsilon\to 0$, the r.h.s. of \eqref{55.16} 
vanishes by Lemma \ref{lem:1}, and statement (iii) follows.
(iv) Define $m_1(\varepsilon,N_0;k)\doteq m^{(\rmp)}(\varepsilon,N_0;k)+1$. From
definition \eqref{55.6} it follows that
$\sum_{n=0}^{m_1}|c_{n,k}^{\,(\rmp;\varepsilon,N_0)}|^2 > C_k$.
For our purpose it is sufficient to prove that
$\lim_{\substack{{N_0\rightarrow+\infty}\\{\varepsilon\rightarrow 0}}}
m_1(\varepsilon,N_0;k)=+\infty$. Suppose, instead,
that such a limit is finite. Then, there exists a finite
number $m_*(k)$ (independent of $\varepsilon$ and $N_0$) such that
$\lim_{\substack{{N_0\rightarrow+\infty}\\{\varepsilon\rightarrow 0}}}
m_1(\varepsilon,N_0;k)\leqslant m_*(k)$. Then, we would have
\beq
C_k < \sum_{n=0}^{m_1(\varepsilon,N_0;k)}\left|c_{n,k}^{\,(\rmp;\varepsilon,N_0)}\right|^2
\leqslant
\sum_{n=0}^{m_*(k)}\left|c_{n,k}^{\,(\rmp;\varepsilon,N_0)}\right|^2.
\label{55.21}
\eeq
But, as $N_0\to+\infty$ and $\varepsilon\to 0$ we have (see also \eqref{55.5}):
\beq
C_k < \sum_{n=0}^{m_*(k)}\left|c_{n,k}^{\,(\rmp;0,\infty)}\right|^2
< \sum_{n=0}^{\infty}\left|c_{n,k}^{\,(\rmp;0,\infty)}\right|^2 = C_k,
\label{55.22}
\eeq
which is a contradiction. Then, statement (iv) is proved.
Statement (v.a) is obvious. Statement (v.b) follows from the
asymptotic behavior of the polynomials $P_m(-\rmi z)$ for large
values of $m$ and $z$ fixed, given in the Appendix 
(see formula \eqref{B3} and statement (ii)).
\end{proof}

\begin{corollary}
\label{cor:2}
From statements (iv) and (v) of Lemma \ref{lem:2} it
follows that, for $N_0$ sufficiently large, $\varepsilon$
sufficiently small and for every $k\in\N$,
the sum $M_k^{(\rmp;\varepsilon,N_0)}(m)$ exhibits (as a function of $m$) a \emph{plateau}:
i.e., a range of $m$--values where it is nearly constant. 
The upper limit of this range is given by $m^{(\rmp)}(\varepsilon,N_0;k)$. 
\end{corollary}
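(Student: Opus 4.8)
The plan is to derive the \emph{plateau} behavior of $M_k^{(\rmp;\varepsilon,N_0)}(m)$ directly by combining the three ingredients already assembled in Lemma \ref{lem:2}: the monotonicity (v.a), the control of the limiting sum (i) together with the convergence of each coefficient (iii), and the eventual polynomial growth (v.b). First I would fix $k\in\N$ and split the range of $m$ into three regions according to the value $m^{(\rmp)}(\varepsilon,N_0;k)$ defined in \eqref{55.6}. For $m\leqslant m^{(\rmp)}(\varepsilon,N_0;k)$, by definition $M_k^{(\rmp;\varepsilon,N_0)}(m)\leqslant C_k$, and by monotonicity (v.a) the sequence is nondecreasing and bounded above by $C_k$ on this initial segment; this gives one side of the plateau.

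Next I would establish that, on a large portion of this initial segment, $M_k^{(\rmp;\varepsilon,N_0)}(m)$ is in fact \emph{close} to $C_k$ rather than merely bounded by it. The idea is that, by statement (iii) of Lemma \ref{lem:2}, for each fixed $n$ we have $c_{n,k}^{\,(\rmp;\varepsilon,N_0)}\to c_{n,k}^{(\rmp)}$ as $N_0\to+\infty$, $\varepsilon\to 0$; hence for any fixed $\overline m$ the partial sum $\sum_{n=0}^{\overline m}|c_{n,k}^{\,(\rmp;\varepsilon,N_0)}|^2$ converges to $\sum_{n=0}^{\overline m}|c_{n,k}^{(\rmp)}|^2$, which by (i) can be made arbitrarily close to $C_k=\sum_{n=0}^\infty|c_{n,k}^{(\rmp)}|^2$ by taking $\overline m$ large. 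Combining this with statement (iv) --- which guarantees $m^{(\rmp)}(\varepsilon,N_0;k)\to+\infty$, so that any such fixed $\overline m$ eventually satisfies $\overline m\leqslant m^{(\rmp)}(\varepsilon,N_0;k)$ --- one obtains: for every $\delta>0$ there exist $\varepsilon_0>0$, $N_0^\ast<\infty$ and an integer $\overline m$ such that for $\varepsilon<\varepsilon_0$ and $N_0>N_0^\ast$,
\beq
C_k-\delta \;\leqslant\; M_k^{(\rmp;\varepsilon,N_0)}(\overline m) \;\leqslant\; M_k^{(\rmp;\varepsilon,N_0)}(m)\;\leqslant\; C_k
\qquad (\overline m\leqslant m\leqslant m^{(\rmp)}(\varepsilon,N_0;k)).
\label{plan.plateau}
\eeq
This sandwiched inequality, valid on the whole range $[\overline m,\,m^{(\rmp)}(\varepsilon,N_0;k)]$, is precisely the statement that $M_k^{(\rmp;\varepsilon,N_0)}(m)$ is nearly constant (equal to $C_k$ up to $\delta$) there, with upper limit $m^{(\rmp)}(\varepsilon,N_0;k)$ as claimed.

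Finally, to justify that the plateau genuinely \emph{ends} at $m^{(\rmp)}(\varepsilon,N_0;k)$ — so that the range is a bona fide distinguished feature and not just the tail of a convergent series — I would invoke statement (v.b): for $m\gg 1$ the summand $|c_{m,k}^{\,(\rmp;\varepsilon,N_0)}|^2$ grows like $A_k^{(\varepsilon,N_0)}(2m)^{2\max\{N_0,\Real z_\rmp-1/2\}}$, so that $M_k^{(\rmp;\varepsilon,N_0)}(m)$ increases without bound (consistently with (ii)) once $m$ exceeds $m^{(\rmp)}(\varepsilon,N_0;k)+1$, where by \eqref{55.6} the partial sum has already overshot $C_k$. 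Thus past the upper limit the function leaves the $\delta$-band around $C_k$ and diverges, confirming that $[\overline m,\,m^{(\rmp)}(\varepsilon,N_0;k)]$ is the plateau. The main obstacle I anticipate is the uniformity in the double limit in \eqref{plan.plateau}: statement (iii) gives convergence of each coefficient separately, and statement (iv) gives $m^{(\rmp)}\to+\infty$, but one must order the quantifiers carefully — choose $\overline m$ first (depending only on $\delta$ via (i)), then $\varepsilon_0,N_0^\ast$ (depending on $\overline m$ via (iii) and (iv)) — to ensure the lower bound $C_k-\delta$ holds simultaneously for all $m$ in the range rather than for isolated values of $m$.
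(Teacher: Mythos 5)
Your proposal is correct, and it is essentially the argument the paper leaves implicit: Corollary \ref{cor:2} is stated with no separate proof, being asserted to follow from statements (iv) and (v) of Lemma \ref{lem:2}. What you add is worth noting. The paper's citation of (iv) and (v.a) alone gives only that $M_k^{(\rmp;\varepsilon,N_0)}(m)$ is nondecreasing and bounded by $C_k$ on a range $[0,m^{(\rmp)}(\varepsilon,N_0;k)]$ whose length diverges in the limit $\varepsilon\to 0$, $N_0\to+\infty$; by itself this does not yet pin down a band of near-constancy. Your use of (i) together with (iii) — choose $\overline m$ from the convergence of $\sum_n|c_{n,k}^{(\rmp)}|^2$ to $C_k$, then choose $\varepsilon,N_0$ so that the finitely many coefficients $c_{n,k}^{(\rmp;\varepsilon,N_0)}$, $n\leqslant\overline m$, are close to their exact values and so that $\overline m\leqslant m^{(\rmp)}(\varepsilon,N_0;k)$ — is exactly what upgrades boundedness-plus-monotonicity to the sandwich $C_k-\delta\leqslant M_k^{(\rmp;\varepsilon,N_0)}(m)\leqslant C_k$ on the whole range, i.e.\ to a genuine plateau at the level $\|h_k(\rmi y)\|^2$; your quantifier ordering ($\overline m$ first, then $\varepsilon_0,N_0^\ast$) is the right one. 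The closing appeal to (v.b), showing the sum leaves the band and diverges beyond $m^{(\rmp)}(\varepsilon,N_0;k)$, matches the paper's intent that this value marks the upper limit of the plateau. In short, your proof is a sound and slightly more complete justification of the corollary than the paper's one-line attribution to (iv) and (v).
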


\begin{remark}
The plateau mentioned in Corollary \ref{cor:2} refers to the evaluation of
$M_k^{(\rmp;\varepsilon,N_0)}(m)$ against $m$ ($k\in\N$) and should not be confused
with the \emph{range of convergence} mentioned at the end of the previous subsection,
which refers to the evaluation of $z_\rmp^{(\varepsilon,N_0)}$ and $R_\rmp^{(\varepsilon,N_0)}$.
\label{rem:3}
\end{remark}

\smallskip

\noindent
Next, we introduce the sum defined by
\beq
\label{NN5bis}
\oM_k^{(\varepsilon,N_0)}(m) \doteq \sum_{n=0}^m \left|\oc_{n,k}^{\,(\varepsilon,N_0)}\right|^2
\qquad (k\in\N;\varepsilon>0,N_0<\infty),
\eeq
where the coefficients $\oc_{n,k}^{\,(\varepsilon,N_0)}$ are given in formula \eqref{4.10abis}
(restricted to $k\in\N$). The following two cases are worth being discussed: 

(1) Suppose that the function $f(z)$ being analyzed is analytic in $\Real z>0$.
In this case the $\oc_{n,k}$ (see \eqref{4.10a}) represent the expansion coefficients of the 
function $h_k(\rmi y)$ (analytic in $\Real z>0$) on the basis $\{\psi_n\}$
(see also formula \eqref{4.3}, where the sum on the r.h.s. coincides with $\oc_{n,k}$).
Therefore the terms $\oc_{n,k}$ and $\oc_{n,k}^{\,(\varepsilon,N_0)}$ enjoy properties
completely analogous to those established in Lemma \ref{lem:2} for the
coefficients $c_{n,k}^{\,(\rmp;\varepsilon,N_0)}$. 
In particular, the sum $\oM_k^{(\varepsilon,N_0)}(m)$ is expected to exhibit
(for $\varepsilon$ sufficiently small and $N_0$ sufficiently large) a \emph{plateau}
whose upper limit will be denoted by $\om(\varepsilon,N_0;k)$.
The properties of this plateau are strictly analogous to those
stated by Corollary \ref{cor:2}. 

(2) If $f(z)$ is meromorphic in $\Real z>0$, the coefficients
$\oc_{n,k}$ are not the expansion coefficients of the (meromorphic)
function $h_k(\rmi y)$ on the basis $\{\psi_n\}$, the actual coefficients
being given instead by formula \eqref{4.3}. Consequently, the sum
$\oM_k^{(\varepsilon,N_0)}(m)$ no longer satisfies the properties
mentioned in the previous point (1), in view of the absence of the pole term in the $\oc_{n,k}$.
Then, $\oM_k^{(\varepsilon,N_0)}(m)$ (as a function of $m$)
ought not to exhibit a \emph{plateau} in the neighborhood of a certain value of $m$. 

Summarizing, the analysis, as a function of $m$, of the sum $\oM_k^{(\varepsilon,N_0)}(m)$ 
(which can be computed from the input data set) can be exploited as an initial test 
of analyticity for the function under consideration.

\vskip 0.3cm

We can now proceed to give an answer to the problem posed in the previous subsection. 
The main idea is inspired by the consistency relations \eqref{4.2} (along with formula \eqref{4.3} 
for the coefficients), which make explicit the mutual relations among the pole parameters and 
the function samples. Equations \eqref{4.2} suggest to compare the samples of the input data set 
$\{f_k^{(\varepsilon)}\}_{k=0}^{N_0}$ with the corresponding values, denoted 
$\{\hf_k^{\,(\rmp;\varepsilon,N_0)}\}_{k=0}^{N_0}$, which can be actually computed when
the estimates $z_\rmp^{(\varepsilon,N_0)}$ and $R_\rmp^{(\varepsilon,N_0)}$ of pole
position and residue have been evaluated. Therefore, we are led to define the following 
approximate coefficients:
\begin{align}
\begin{split}
\hf_k^{\,(\rmp;\varepsilon,N_0)}
& \doteq (-1)^{k+1} \left\{\sum_{N=0}^{N_0} (-1)^N f_N^{(\varepsilon)} \, (1-\delta_{Nk})
-\frac{\pi R_\rmp^{(\varepsilon,N_0)}}{\cos\pi z_\rmp^{(\varepsilon,N_0)}} \right. \\
&\quad \left.-\ud\int_{-\infty}^{+\infty}\frac{\sum_{n=0}^{\hm^{\,(\rmp)}}
\hc_{n,k}^{\,(\rmp;\varepsilon,N_0)}\psi_n(y)}{[(x+\ud)-\rmi y]\cosh\pi y}\,\rmd y \right\} \\
&= (-1)^{k+1} \left\{\sum_{N=0}^{N_0} (-1)^N f_N^{(\varepsilon)} \, (1-\delta_{Nk})
-\frac{\pi R_\rmp^{(\varepsilon,N_0)}}{\cos\pi z_\rmp^{(\varepsilon,N_0)}} \right. \\
&\quad\left.+\sum_{n=0}^{\hm^{(\rmp)}} \hc_{n,k}^{\,(\rmp;\varepsilon,N_0)} 
Q_n\left[-\rmi\left(k+\ud\right)\right]\right\}\quad (k=0,\ldots,N_0),
\end{split}
\label{55.23} \\
\intertext{where:}
\begin{split}
\hc_{n,k}^{\,(\rmp;\varepsilon,N_0)} & \doteq 2\sqrt{\pi}
\left\{\sum_{N=0}^{N_0}\frac{(-1)^N}{N!}(N-k)f_N^{(\varepsilon)}
\,P_n\left[-\rmi\left(N+\frac{1}{2}\right)\right]\right.\\
& \quad\left.-\left(z_\rmp^{(\varepsilon,N_0)}-k-\frac{1}{2}\right)
R_\rmp^{(\varepsilon,N_0)}\,
\Gamma\left(\frac{1}{2}-z_\rmp^{(\varepsilon,N_0)}\right)
\,P_n\left(-\rmi z_\rmp^{(\varepsilon,N_0)}\right)\right\} \\
& = \oc_{n,k}^{(\varepsilon,N_0)}-\left(\z_\rmp^{(\varepsilon,N_0)}-k\right)\tau_n^{(\varepsilon,N_0)}
\qquad (n=0,1,2,\ldots),
\end{split}
\label{4.3.bis}
\end{align}
$\psi_n(y)$ being defined in \eqref{0.8.bis},
$Q_n\left[-\rmi\left(k+\ud\right)\right]$ in \eqref{0.18.new.1}, while the truncation number 
$\hm^{(\rmp)}=\hm^{(\rmp)}(\varepsilon,N_0;k)$ will be defined in what follows. 
Notice that the coefficients $\hc_{n,k}^{\,(\rmp;\varepsilon,N_0)}$ differ from the corresponding 
coefficients $c_{n,k}^{\,(\rmp;\varepsilon,N_0)}$ (see \eqref{pullo.1})
for the fact that in the expression of $\hc_{n,k}^{\,(\rmp;\varepsilon,N_0)}$ use is made of
the computed estimates $z_\rmp^{(\varepsilon,N_0)}$ and $R_\rmp^{(\varepsilon,N_0)}$
instead of their exact values, which are unknown as long as $\varepsilon>0$ and $N_0<\infty$. 
However, if the computed estimates $z_\rmp^{(\varepsilon,N_0)}$ and $R_\rmp^{(\varepsilon,N_0)}$ are
``close'' to the exact values $z_\rmp$ and $R_\rmp$, then correspondingly the terms 
$\hc_{n,k}^{\,(\rmp;\varepsilon,N_0)}$ are ``close'' to the coefficients 
$c_{n,k}^{\,(\rmp;\varepsilon,N_0)}$. Hence, in strict analogy with the definitions given
in Lemma \ref{lem:2} concerning the coefficients $c_{n,k}^{\,(\rmp;\varepsilon,N_0)}$, we can introduce
the following sum (see \eqref{55.8}) :
\beq
\hM_k^{\,(\rmp;\varepsilon,N_0)}(m)
\doteq \sum_{n=0}^m \left|\hc_{n,k}^{\,(\rmp;\varepsilon,N_0)}\right|^2
\qquad (k\in\N).
\label{L6}
\eeq
Notice that $\hM_k^{\,(\rmp;0,\infty)}(m)=M_k^{\,(\rmp;0,\infty)}(m)$
since $\hc_{n,k}^{\,(\rmp;0,\infty)}=c_{n,k}^{\,(\rmp;0,\infty)}$.

The analysis of $\hM_k^{\,(\rmp;\varepsilon,N_0)}(m)$ may
provide us with a first qualitative test of the degree of approximation of
$z_\rmp^{(\varepsilon,N_0)}$ and $R_\rmp^{(\varepsilon,N_0)}$.
In fact, if $\hM_k^{\,(\rmp;\varepsilon,N_0)}(m)$
exhibits a \emph{plateau} as a function of $m$, that is, if it manifests a behavior analogous 
to that expected from the analysis of $M_k^{\,(\rmp;\varepsilon,N_0)}(m)$ against $m$ 
(the existence of a \emph{plateau} in $M_k^{\,(\rmp;\varepsilon,N_0)}(m)$ is proved by 
Corollary \ref{cor:2}), then the coefficients
$\hc_{n,k}^{\,(\rmp;\varepsilon,N_0)}$ are likely to be close to the
corresponding values $c_{n,k}^{\,(\rmp;\varepsilon,N_0)}$, and,
accordingly, the estimates $z_\rmp^{(\varepsilon,N_0)}$ and $R_\rmp^{(\varepsilon,N_0)}$ 
are close to their exact values $z_\rmp$ and $R_\rmp$.
In this case, the upper limit of this \emph{plateau}, which we denote by
$\hm^{(\rmp)}(\varepsilon,N_0;k)$, enjoys properties strictly analogous to those of
$m^{(\rmp)}(\varepsilon,N_0;k)$, which have been stated in Lemma \ref{lem:2},
and therefore, can be used as the truncation number for the rightmost sum in formula \eqref{55.23}.

Now, once $\hm^{(\rmp)}(\varepsilon,N_0;k)$ has been set, formula \eqref{55.23}
allows us to compute the \emph{approximate samples} $\hf_k^{\,(\rmp;\varepsilon,N_0)}$,
representing the set of \emph{samples} which are compatible with the computed 
estimates of pole position $z_\rmp^{(\varepsilon,N_0)}$ and residue $R_\rmp^{(\varepsilon,N_0)}$.
At this point we are naturally brought to compare these \emph{approximate samples} 
$\hf_k^{\,(\rmp;\varepsilon,N_0)}$ with the input data, i.e. the \emph{noisy samples} 
$f_k^{(\varepsilon)}$.
Then, as a measure of the accuracy of the computation
of the samples, and hence also of the pole parameters, use can be made of the relative 
root mean squared error:
\beq
\label{num1}
\delta^{(\varepsilon,N_0)} \doteq \left(\frac{1}{(N_0+1)}
\sum_{k=0}^{N_0}\frac{\left|f_k^{(\varepsilon)}-\hf_k^{\,(\rmp;\varepsilon,N_0)}\right|^2}
{\left|f_k^{(\varepsilon)}\right|^2}\right)^{1/2},
\eeq
which gives a quantitative numerical evaluation of the degree of approximation to $z_\rmp$
and $R_\rmp$ by the estimates $z_\rmp^{(\varepsilon,N_0)}$ and $R_\rmp^{(\varepsilon,N_0)}$,
as initially required by the problem in Section \ref{subse:algorithm:finite}.

\begin{remark}
In the case of functions $f(z)$ analytic in $\Real z >0$, the formula for computing the
\emph{approximate samples} is obviously different for the absence of the pole term.
Then, in formula \eqref{num1}, instead of $\hf_k^{\,(\rmp;\varepsilon,N_0)}$ given in \eqref{55.23},
there must be inserted the terms $\hf_k^{\,(\varepsilon,N_0)}$ given by:
\begin{align}
\begin{split}
\hf_k^{\,(\varepsilon,N_0)}
& \doteq (-1)^{k+1}\left\{\sum_{N=0}^{N_0} (-1)^N f_N^{(\varepsilon)} \, (1-\delta_{Nk}) \right. \\
&\quad\left.+\sum_{n=0}^{\om(\varepsilon,N_0;k)} \hc_{n,k}^{\,(\varepsilon,N_0)} 
Q_n\left[-\rmi\left(k+\ud\right)\right]\right\}\quad (k=0,\ldots,N_0),
\end{split}
\label{M1} \\
\intertext{where, for $n=0,1,2,\ldots$}
\begin{split}
\hc_{n,k}^{\,(\varepsilon,N_0)} & \doteq 2\sqrt{\pi}
\sum_{N=0}^{N_0}\frac{(-1)^N}{N!}(N-k)f_N^{(\varepsilon)}
\,P_n\left[-\rmi\left(N+\frac{1}{2}\right)\right] \equiv \oc_{n,k}^{(\varepsilon,N_0)},
\end{split}
\label{M1.bis}
\end{align}
$\om(\varepsilon,N_0;k)$ being the upper limit of the plateau exhibited by 
$\oM_k^{(\varepsilon,N_0)}(m)$, regarded as a function of $m$.
\label{rem:4}
\end{remark}

\begin{remark}
In the case $f(z)$ is a meromorphic function, 
for all values $m \lesssim \hm^{(\rmp)}(\varepsilon,N_0;k)$
such that $\hM_k^{\,(\rmp;\varepsilon,N_0)}(m)$ 
exhibits a \emph{plateau} we have $\hc_{n,k}^{\,(\rmp;\varepsilon,N_0)}\sim 0$.
It then follows that the actual value of the truncation number in the rightmost
sum on the r.h.s. of formula \eqref{55.23}
is not critical, and therefore any such $m \lesssim \hm^{(\rmp)}(\varepsilon,N_0;k)$
may be selected as an acceptable value where to stop the sum. 
In the next section, devoted to numerical examples,
we will see that, in the actual numerical implementation, it can be convenient 
to truncate the sum
at a $m$--value slightly different from $\hm^{(\rmp)}(\varepsilon,N_0;k)$. 
The practical evaluation of this truncation point, denoted $\hm^{(\rmp)}_\rmt$,
will be specified in the next section. \\
Similar arguments hold \emph{mutatis mutandis} in the case $f(z)$ is an analytic function,
the role of $\hM_k^{\,(\rmp;\varepsilon,N_0)}(m)$, $\hm^{(\rmp)}(\varepsilon,N_0;k)$,
$\hc_{n,k}^{\,(\rmp;\varepsilon,N_0)}$, and $\hm^{\,(\rmp)}_\rmt$ being now played by
$\oM_k^{(\varepsilon,N_0)}(m)$, $\om(\varepsilon,N_0;k)$, 
$\oc_{n,k}^{\,(\varepsilon,N_0)}$, and $\om_\rmt$, respectively.
\label{rem:5}
\end{remark}

\begin{remark}
In the case $f(z)$ is a meromorphic function, and
in view of what has been discussed above, the formulae for the actual numerical implementation 
of the interpolation formula \eqref{3.1} (along with \eqref{3.2}) read:
\beq
\begin{split}
\hf^{\,(\varepsilon,N_0)}\left(x+\frac{1}{2}\right) &= \sum_{N=0}^{N_0} f_N^{(\varepsilon)} \sinc(x-N)
-\frac{R_\rmp^{(\varepsilon,N_0)}}{\cos\left(\pi z_\rmp^{(\varepsilon,N_0)}\right)}
\frac{\sin\pi x}{\left(x+\ud-z_\rmp^{(\varepsilon,N_0)}\right)} \\
& \,\,\, -\frac{\sin\pi x}{\pi}\sum_{n=0}^{\ \hm^{(\rmp)}}
\hc_{n}^{\,(\rmp;\varepsilon,N_0)}\,Q_n\left[-\rmi\left(x+\ud\right)\right]
\qquad \left(x > -\frac{1}{2}\right),
\label{3.1.bis}
\end{split}
\eeq
where, for $n=0,1,2,\ldots$
\beq
\begin{split}
\hc_n^{\,(\rmp;\varepsilon,N_0)} &=
2\sqrt{\pi}\left\{\sum_{N=0}^{N_0} \frac{(-1)^N}{N!} f_N^{(\varepsilon)}\, 
P_n\left[-\rmi\left(N+\frac{1}{2}\right)\right] \right. \\
& \quad\left. -R_\rmp^{(\varepsilon,N_0)}\Gamma\left(\frac{1}{2}-z_\rmp^{(\varepsilon,N_0)}\right) 
P_n\left(-\rmi z_\rmp^{(\varepsilon,N_0)}\right)\right\},
\label{3.2.bis}
\end{split}
\eeq
and, for the given values of $\varepsilon$ and $N_0$ and for every $x>-\ud$, 
$\hf^{\,(\varepsilon,N_0)}(x+\ud)$ represents the approximation to the function $f(x+\ud)$.
\label{rem:6}
\end{remark}

\section{Numerical examples}
\label{se:numerical}

The purpose of this section is to illustrate through numerical examples the main steps of the theory.
To begin with, we consider as a preparatory example 
the function $f_1(z)=C/(z+5)^5$, ($C$ constant), which satisfies the conditions 
assumed in Theorem \ref{the:1new} and,
in particular, it is analytic in $\Real z>0$; the analysis is summarized in 
Figs. \ref{fig:2} and \ref{fig:3}.
In Fig. \ref{fig:2}a the plot of the sum $\oM_k^{(\varepsilon,N_0)}(m)$ (defined in \eqref{NN5bis}),
computed for various values of $k$ (see the figure legend for numerical details),
displays clearly the presence of the plateaux, which manifest the
analyticity of the function $f_1(z)$ in $\Real z>0$ (see also Corollary \ref{cor:2}).
In this example, as well as in all those that follow (with the exception 
of that referring to Fig. \ref{fig:8}), we set $\varepsilon\equiv\varepsilon_\mathrm{R}$, which means that
no noise has been added to the input samples $f_N$, and that the only source of 
(inevitable) noise is given by the numerical roundoff error. The plateaux range 
approximately from $n_\mathrm{min}(\varepsilon,N_0;k) \sim 40$ through 
$\om (\varepsilon,N_0;k) \sim 240$; for $m \gtrsim \om(\varepsilon,N_0;k)$ we see that 
$\oM_k^{(\varepsilon,N_0)}(m)$ starts to diverge as a power of $m$ (see \eqref{55.9}) for
the presence of the roundoff noise and the finiteness of the number of input samples 
(in this case $N_0=60$). From the inspection of these plateaux we can determine the 
truncation number $\om_\rmt$, which must lie within the plateaux, and which is necessary for 
the reconstruction of the data samples (see \eqref{M1}).
The choice of $\om_\rmt$ within the plateaux is not critical
for the accuracy of the final result (see Fig. \ref{fig:3}a);
the actual value of $\om_\rmt$ (within the plateau) can be conveniently set by exploiting  
formula \eqref{M1} for the sample reconstruction.
In fact, by \eqref{M1} we can compute the approximate samples 
$\hf_k^{\,(\varepsilon,N_0)}$ (which depend on the truncation number $\om_\rmt$)
and, correspondingly, also the mean error $\delta^{(\varepsilon,N_0)}$ (see \eqref{num1}).
The strategy is then to set $\om_\rmt$ as the integral number which minimizes 
$\delta^{(\varepsilon,N_0)}$ ($\varepsilon$ and $N_0$ being fixed).
In this example, the value of $\om_\rmt$ which minimizes $\delta^{(\varepsilon,N_0)}$ 
(with $\varepsilon=\varepsilon_\mathrm{R}$ and $N_0=60$) is $\om_\rmt(\varepsilon,N_0;k)=122$; the
reconstructed samples, computed through \eqref{M1}, are shown in 
Fig. \ref{fig:2}b (filled dots) superimposed to the function $f_1(x)$ (solid line); 
the high quality of the reconstruction ($\delta^{(\varepsilon,N_0)}=2.74\times 10^{-5}$) is evident.

The role played by the various parameters intervening in the algorithm is summarized in 
Fig. \ref{fig:3}. In Fig. \ref{fig:3}a we show the behavior of the reconstruction error 
$\delta^{(\varepsilon,N_0)}$ as a function of the truncation number $\om_\mathrm{t}$ (see \eqref{M1}). 
We see that the error becomes tiny when $\om_\mathrm{t}$ enters the plateaux ($\om_\rmt\sim 50$, see 
Fig. \ref{fig:2}a), and does not vary appreciably as long as $\om_\rmt$ remains within it.
Figure \ref{fig:3}b shows the sum $\oM_{k}^{(\varepsilon,N_0)}(m)$ against $m$ ($k=10$) for various values 
of the number of input samples $N_0$. It can be seen that the length of the plateaux 
increases as the number of input data increases, reflecting the increase of \emph{information} 
available for the computation. At $N_0=60$ the effect of the roundoff noise appears, and it is 
this noise indeed which limits superiorly the length of the plateau; in fact, for this function, 
the length of the plateau no longer increases for $N_0\gtrsim 60$.
In Fig. \ref{fig:3}c we investigate how the analysis depends on the asymptotic behavior 
(for $z\rightarrow\infty$) of the function $f(z)$. For this purpose we consider the function 
$f_1^{(q)}(z)=C/(z+5)^q$, and plot in Fig. \ref{fig:3}c the sum $\oM_{k}^{(\varepsilon,N_0)}(m)$
($k=10$, $\varepsilon=\varepsilon_\mathrm{R}$, and $N_0=60$ fixed) at different values of $q$. 
Even in this case the length of the 
plateaux increases evidently with $q$. It should be remarked that, as expected, for $q=1$
no plateau appears since the function 
$h_k(\rmi y)=(\rmi y-k-\frac{1}{2})f_1^{(1)}(\rmi y) \not\in L_2(-\infty,+\infty)$
(see Theorem \ref{the:3new}).
\begin{figure}[htb] 
\captionsetup{width=0.95\textwidth}
\begin{center}
\leavevmode
\includegraphics[width=11.4cm]{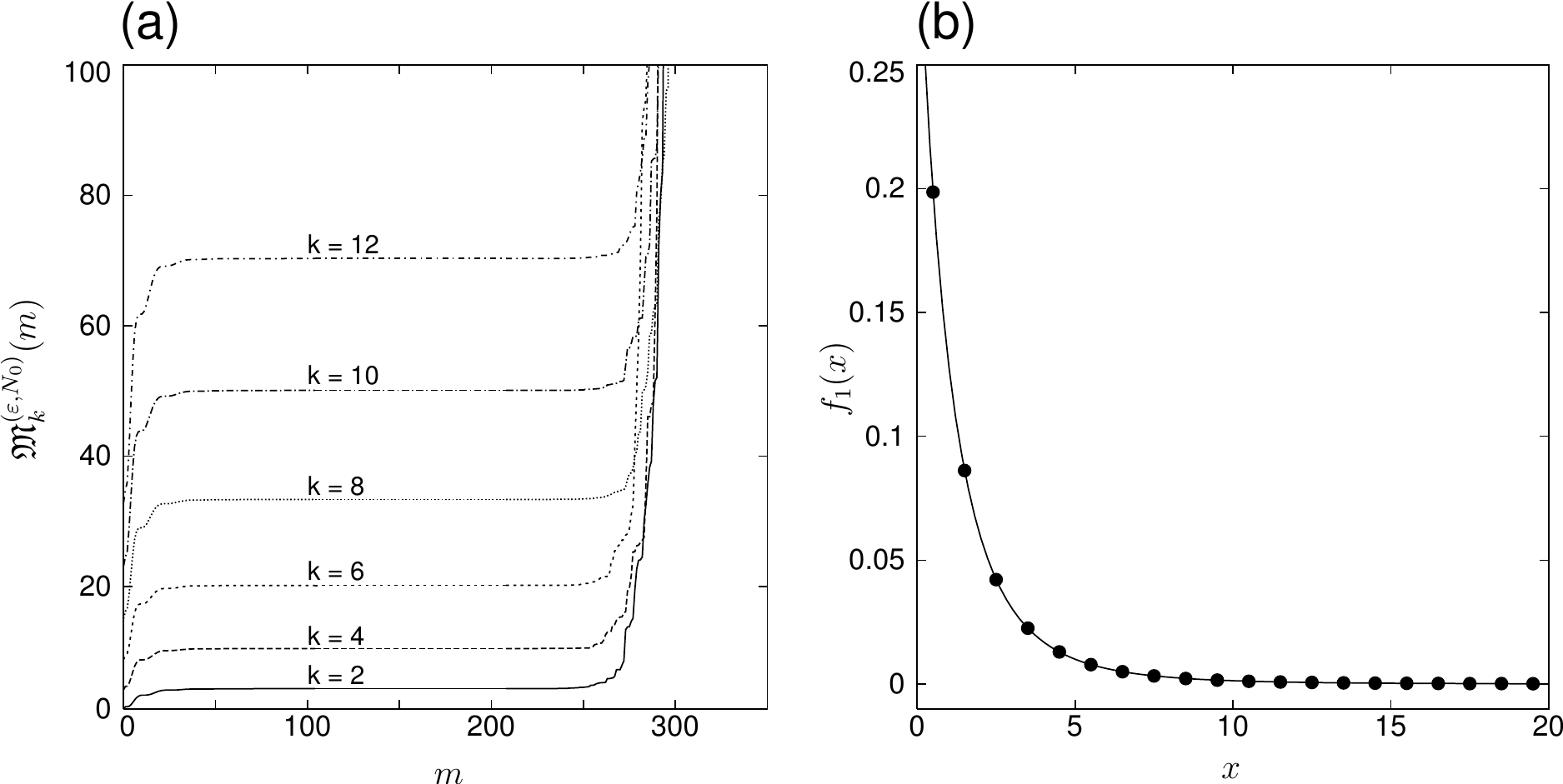}
\caption{\label{fig:2} 
\small \textsl{Analysis of a function analytic in
$\Real z>0$: $f_1(z)=C(z+5)^{-5}$. $C=10^{3}$; $N_0=60$;
$\varepsilon \equiv \varepsilon_\mathrm{R}$: i.e., no noise is added to the function samples
but only numerical roundoff noise is present. {\bf (a)}: Plot of the
sum $\oM_k^{(\varepsilon,N_0)}(m)$ vs. $m$ for some values of $k$ (see
formula \eqref{NN5bis}). {\bf (b)}: Reconstructed samples $\hf_j^{\,(\varepsilon,N_0)}$ 
(filled dots) of the function $f_1(x)$ (solid line) at 20 half--integers
values $x_j\doteq j+\frac{1}{2}$, $j=0,\ldots,19$, computed by using 
Eq. \eqref{M1}. For every $k$, the truncation number has been set to the value
$\om_\rmt(\varepsilon,N_0;k)=122$, which represents the value
that minimizes the relative root mean squared error $\delta^{(\varepsilon,N_0)}$ 
(see Eq. \eqref{num1}): in this case, $\delta^{(\varepsilon,N_0)} = 2.74 \times 10^{-5}$.}
}
\end{center}
\end{figure}
Parallelly, in Fig. \ref{fig:3}d, where the plots of the 
reconstructed samples $\hf_k^{\,(\varepsilon,N_0)}$ are shown for various values of $q$, 
we see that for $q=1$ the reconstruction clearly deteriorates.

We can now move on to analyze the case of meromorphic functions. In order to elucidate 
the algorithm presented in Sections \ref{subse:algorithm:infinite} and \ref{subse:algorithm:finite}, 
we begin by considering a simple, though canonical, meromorphic function:
$f_2(z)=\frac{C}{(z+3)^3}\frac{1}{(z-z_\rmp)}$, which has a first order pole in the 
half--plane $\Real z>0$ at $z_\rmp=6.2+0.15\rmi$; the factor $(z+3)^{-3}$ in $f_2(z)$
guarantees the necessary asymptotic behavior for $\Real z\to +\infty$, without introducing 
poles in the half--plane $\Real z>0$. The various steps, which lead to the recovery of pole 
location and residue, are summarized in Figs. \ref{fig:4} and \ref{fig:5}.

From the input data $(f_2)_N^{(\varepsilon_\mathrm{R})}$ ($N=0,\ldots,N_0$), which are the numerical 
approximation of the function samples $f_2(N+\ud)$, we can compute, for integral values of $k$,
the sum $\oM_k^{(\varepsilon,N_0)}(m)$ (see formulae \eqref{NN5bis} and \eqref{4.10abis}),
whose plot versus $m$ is shown in Fig. \ref{fig:4}a. The absence of any plateau 
in this plot indicates \emph{the lack of analyticity} of the function $f_2(z)$ 
in $\Real z>0$; in fact, as discussed in Section \ref{subse:consistency:finite} 
(see, in particular, Corollary \ref{cor:2}), it is instead the sum $M_k^{(\rmp;\varepsilon,N_0)}(m)$
(and its approximate version $\hM_k^{\,(\rmp;\varepsilon,N_0)}(m)$, see \eqref{55.8} and \eqref{L6}), 
containing the terms $(\z_\rmp-k)\tau_n$ related to the (unknown) pole, which is expected 
to exhibit a plateau. Thus, as discussed in 
Section \ref{subse:consistency:finite}, the study of $\oM_k^{(\varepsilon,N_0)}(m)$ can be used 
as a preliminary tool for testing the analyticity for $f(z)$. 
\begin{figure}[htb]  
\captionsetup{width=0.95\textwidth}
\begin{center}
\leavevmode
\includegraphics[width=11.4cm]{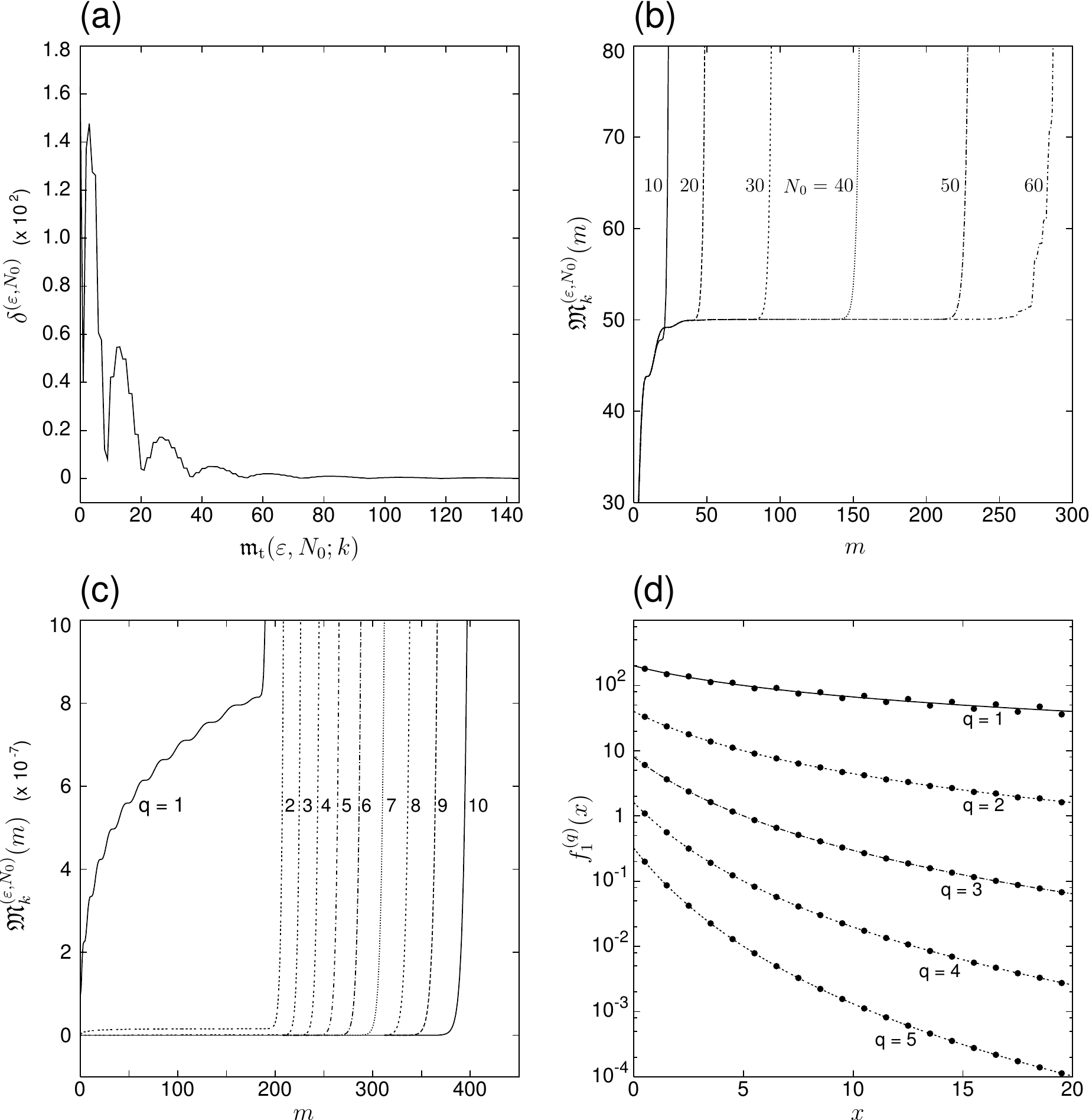}
\caption{\label{fig:3}
\small
\textsl{Analysis of a function analytic in $\Real z>0$: $f_1^{(q)}(z)=C(z+5)^{-q}$;
$C=10^{3}$; $N_0=60$; $\varepsilon \equiv \varepsilon_\mathrm{R}$.
{\bf (a)}: $q=5$. Relative root mean squared error $\delta^{(\varepsilon,N_0)}$ vs. the 
truncation number $\om_\rmt$ (see \eqref{num1}, \eqref{M1}, and the legend of Fig. \ref{fig:2}b).
{\bf (b)}: $q=5$. Plot of the sum $\oM_{k}^{(\varepsilon,N_0)}(m)$ vs. $m$, at various values of the 
number $N_0$ of input function samples: $N_0=10,20,30,40,50,60$ (see \eqref{NN5bis}, \eqref{4.10abis}); 
$k=10$.
{\bf (c)}: Plot of $\oM_{k}^{(\varepsilon,N_0)}(m)$ vs. $m$ for different values of the 
exponent $q$: $q=1,\ldots,10$; $k = 10$.
{\bf (d)}: Reconstructed samples $\hf_j^{\,(\varepsilon,N_0)}$ 
(filled dots) of the function $f_1^{(q)}(x)$ (solid line) at 
the half--integers $x_j$, computed by Eq. \eqref{M1} for various values of the exponent $q$.
For each $q$, the value of the truncation number $\om_\rmt(\varepsilon,N_0;k)$, used in 
formula \eqref{M1} for reconstructing the samples, 
is the one which minimizes $\delta^{(\varepsilon,N_0)}$;
$q=1:\om_\rmt(\varepsilon,N_0;k)=41,\delta^{(\varepsilon,N_0)}=2.08\times 10^{-2}$;
$q=2:\om_\rmt(\varepsilon,N_0;k)=72,\delta^{(\varepsilon,N_0)}=1.38\times 10^{-2}$;
$q=3:\om_\rmt(\varepsilon,N_0;k)=29,\delta^{(\varepsilon,N_0)}=1.08\times 10^{-3}$;
$q=4:\om_\rmt(\varepsilon,N_0;k)=138,\delta^{(\varepsilon,N_0)}=4.87\times 10^{-4}$;
$q=5:\om_\rmt(\varepsilon,N_0;k)=122,\delta^{(\varepsilon,N_0)}=2.74\times 10^{-5}$.}
}
\end{center}
\end{figure}
It is worth remarking that this test is not limited to functions with only polar singularities, but works also in the 
case of multivalued functions with branch cuts in $\Real z>0$; moreover, this procedure 
works also as a test when the function $f(z)$ has a pole (of order $>1$), whose residue is null
(for the sake of brevity, the numerical examples are not reported here).

Figure \ref{fig:4}b shows the plots of $\Real z_\rmp^{(\varepsilon,N_0)}(n)$ and
$\Imag z_\rmp^{(\varepsilon,N_0)}(n)$ (computed by means of Eq. \eqref{Q5.2} and recalling that
$z_\rmp^{\,(\varepsilon,N_0)}(n)=\z_\rmp^{\,(\varepsilon,N_0)}(n)+\ud$) 
versus $n$. 
\begin{figure}[htp] 
\captionsetup{width=0.95\textwidth}
\begin{center}
\leavevmode
\includegraphics[width=11.4cm]{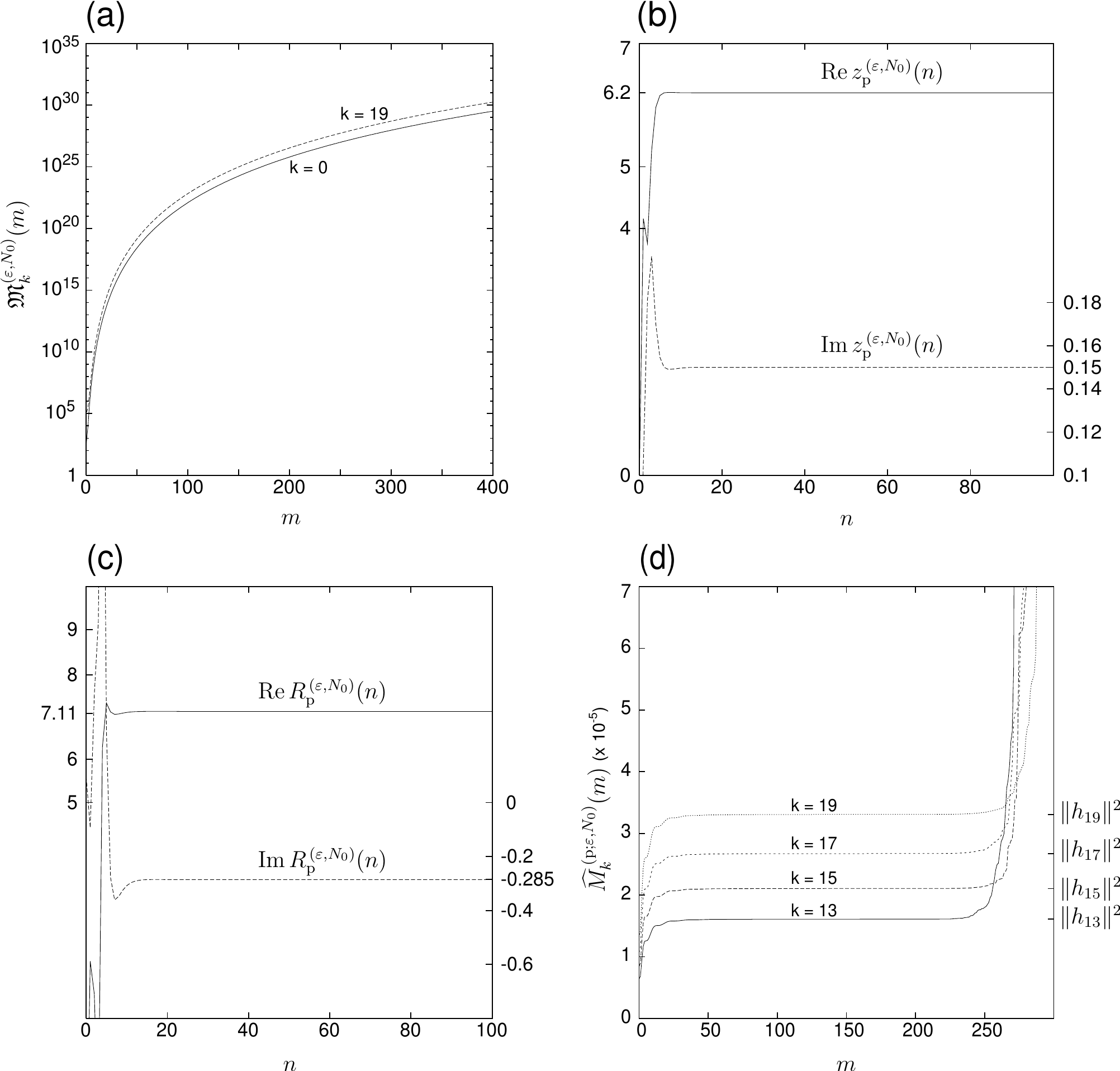}
\caption{\label{fig:4}
\scriptsize
\textsl{Analysis of a function with one first order pole in $\Real z>0$:
$f_2(z)=C(z+5)^{-3}(z-z_\rmp)^{-1}$. $z_\rmp=6.2+0.15\rmi$, $R_\rmp=7.110146-0.2858122\rmi$.
$C=10^4$, $N_0=60$, $\varepsilon\equiv\varepsilon_\mathrm{R}$.
{\bf (a)}: Analyticity test for $f_2(z)$ in $\Real z>0$: plot of the sum $\oM_k^{(\varepsilon,N_0)}(m)$
against $m$, at different values of $k$ (notice the absence of the pole term in \eqref{4.10abis}).
{\bf (b)}: Recovery of pole location. Real (left scale) and imaginary (right scale) part of the function 
$z_\rmp^{(\varepsilon,N_0)}(n)$ vs. $n$: for every value of $n$, $z_\rmp^{(\varepsilon,N_0)}(n)$ has been 
computed by means of formula \eqref{Q5.2} (see the algorithm described in 
Subsection \ref{subse:algorithm:finite}).
Length of the \emph{ranges of convergence}, obtained with bin--width $W_\mathrm{p} = 10^{-3}\%$;
Real part: $L_\mathrm{p}^\mathrm{R} = 416$; Imaginary part: $L_\mathrm{p}^\mathrm{I} = 418$.
The position of the pole, computed as the sample mean of $\Real z_\rmp^{(\varepsilon,N_0)}(n)$ for $n$ 
within the \emph{range of convergence}, is:
$z_\rmp^{(\varepsilon,N_0)}=(6.20000005 \pm 1.5 \times 10^{-7})+(1.4999996 \times 10^{-1} \pm 
1.9 \times 10^{-7})\rmi$.
{\bf (c)}: Recovery of the pole residue. Real (left scale) and imaginary (right scale) part of the 
function $R_\rmp^{(\varepsilon,N_0)}(n)$ vs. $n$: for every $n$, $R_\rmp^{(\varepsilon,N_0)}(n)$ 
has been computed by Eq. \eqref{L4}, using the value of $z_\rmp^{(\varepsilon,N_0)}$ estimated 
from the data shown in panel (b). 
Length of the \emph{ranges of convergence}, obtained with bin--width $W_\mathrm{p} = 10^{-3}\%$:
$L_\mathrm{p}^\mathrm{R} = 509$, $L_\mathrm{p}^\mathrm{I} = 454$.
The residue, computed as the sample mean of $R_\rmp^{(\varepsilon,N_0)}(n)$ for $n$ within the 
\emph{range of convergence} is:
$R_\rmp^{(\varepsilon,N_0)}=(7.1101458 \pm 7.4 \times 10^{-6})-(2.8581197 \times 10^{-1} 
\pm 5.6 \times 10^{-7})\rmi$.
{\bf (d)}: Plateaux after pole recovery. Plot of the sum $\hM_k^{\,(\rmp;\varepsilon,N_0)}(m)$ vs. $m$ 
for various $k$ (see formula \eqref{L6}, and cf. panel (a)). 
The value of the plateau of $\hM_k^{\,(\rmp;\varepsilon,N_0)}$
is in correspondence of the squared norm $\|h_k(\rmi y)\|^2$ (see \eqref{55.3}).
The values of $z_\rmp^{(\varepsilon,N_0)}$ and $R_\rmp^{(\varepsilon,N_0)}$
inserted in the coefficients in \eqref{pullo.1} (and used 
to calculate $\hM_k^{\,(\rmp;\varepsilon,N_0)}(m)$), 
are those obtained from the analysis of the data shown in panels (b) and (c).}
}
\end{center}
\end{figure}
In this example both $\varepsilon$ and $N_0$ must be considered as fixed, 
and take on the values 
$\varepsilon=\varepsilon_\mathrm{R}$ (which means that only numerical roundoff error 
is present) and $N_0=60$. In Fig. \ref{fig:4}b wild oscillations
can be observed in both plots for $n < n_\mathrm{min}\simeq 10$, whereas, 
for $n>n_\mathrm{min}$ the value of $z_\rmp^{(\varepsilon,N_0)}(n)$ 
(that is, of its real and imaginary parts) remains nearly constant over a wide range 
of $n$--values; the upper limit of this interval (not visible in this figure) is 
$n_\mathrm{max}=426$ (see Section \ref{subse:algorithm:finite}).

\begin{figure}[htb] 
\captionsetup{width=0.95\textwidth}
\begin{center}
\leavevmode
\includegraphics[width=11.4cm]{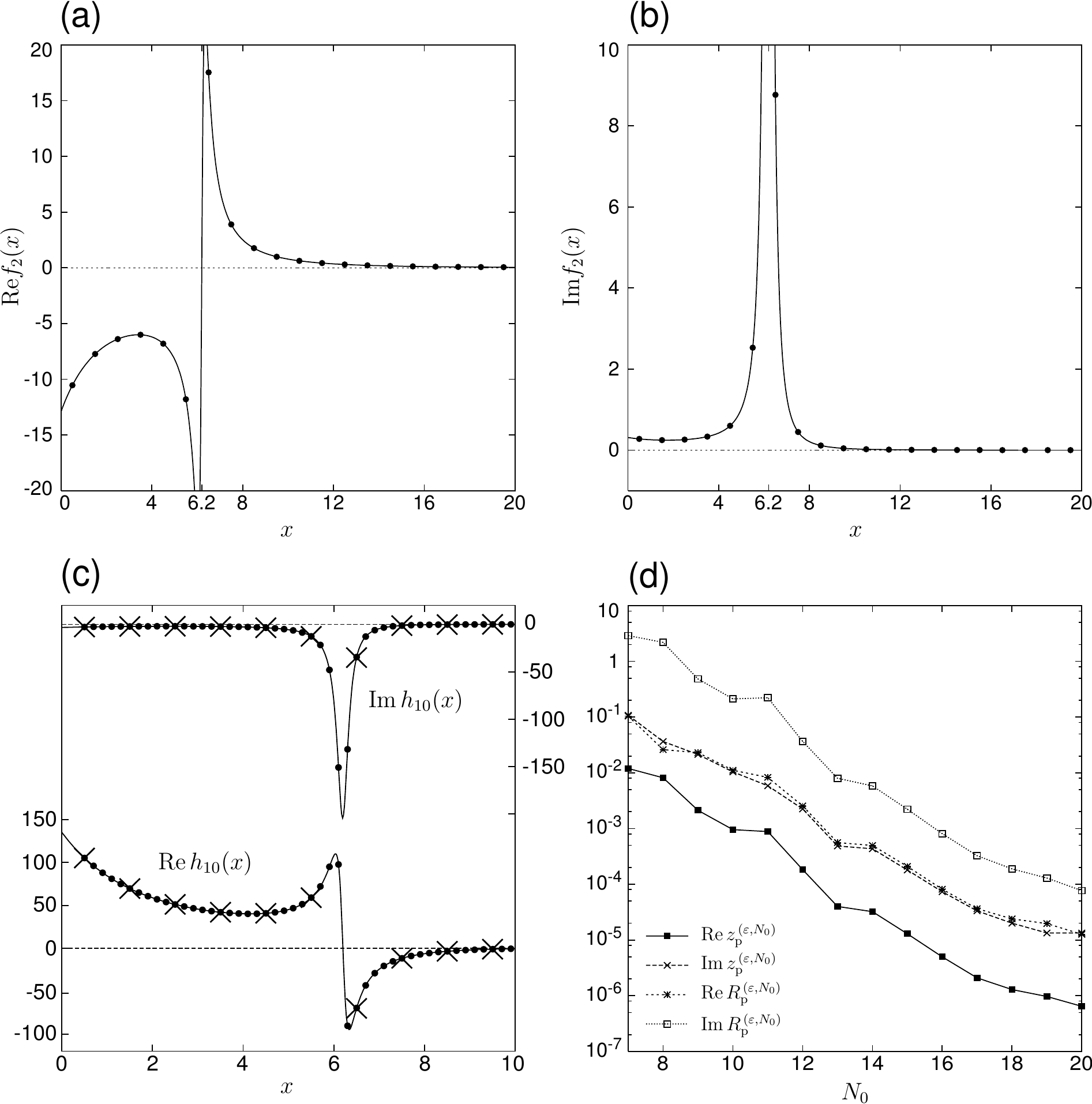}
\caption{\label{fig:5}
\small\textsl{
Analysis of the function $f_2(z)$ with one first order pole in $\Real z>0$ 
(see also the legend of Fig. \ref{fig:4}). $N_0=60$, $\varepsilon\equiv\varepsilon_\mathrm{R}$.
{\bf (a)} and {\bf (b)}: Reconstructed samples $\hf_j^{\,(\rmp;\varepsilon,N_0)}$ (filled dots) 
of $f_2(x)$ (solid line) at the half--integers $x_j$, computed by Eq. \eqref{55.23}.
The truncation number used in \eqref{55.23} is 
$\hm_\rmt^{(\rmp)}(\varepsilon,N_0;k) = 40$, which lies within the \emph{range of convergence}
of $z_\rmp^{(\varepsilon,N_0)}(n)$ shown in Fig. \ref{fig:4}b, and minimizes the relative
root mean squared error: $\delta^{(\varepsilon,N_0)}=1.19\times 10^{-3}$.
{\bf (c)}: Interpolation of the function $h_{10}(x)\doteq(x-\frac{21}{2})f_2(x)$ (solid line) 
associated with $f_2(x)$.
The interpolated values (filled dots) have been computed through formula \eqref{3.1.bis}
and, for better visualization, are shown only for values of $x$ on a uniform grid with step $0.2$.
The crosses represent the samples $h_{10}(N+\frac{1}{2})$ ($N\in[0,N_0]$) used as input data.
{\bf (d)}: Relative error in the evaluation of pole location and residue versus the number $N_0$ of
input function samples $f_N$, $N\in[0,N_0]$.}
}
\end{center}
\end{figure}

The occurrence of this extended plateau (its length is 
$L_\rmp \doteq (n_\mathrm{max}-n_\mathrm{min})= 416$)\footnote{
Actually, we have two lengths $L_\rmp^\rmR$ and $L_\rmp^\rmI$ associated with
the \emph{range of convergence} of the real and of the imaginary part, respectively.
Since they are usually very similar, for simplicity we will frequently refer only 
to $L_\rmp = L_\rmp^\rmR \simeq L_\rmp^\rmI$.}
guarantees that the coefficients $c_n^{\,(\rmp;\varepsilon,N_0)}(k)$ (see \eqref{pullo.1}),
from which the values of $z_\rmp^{(\varepsilon,N_0)}(n)$ follow, and which
are expected to vanish within a certain interval $n_\mathrm{min}<n<n_\mathrm{max}$ 
(see Section \ref{subse:algorithm:finite}), are nearly constant indeed in this interval of $n$--values.
The actual value of this constant, along with the extension of the plateau 
(or \emph{range of convergence}), is determined by the values of $\varepsilon$ and $N_0$.
If $\varepsilon$ is ``sufficiently small'' and $N_0$ is ``sufficiently large''
we have that within this \emph{range of convergence} $c_n^{\,(\rmp;\varepsilon,N_0)}(k)\simeq 0$
(see also \eqref{4.99.tris}), which implies 
$z_\rmp^{(\varepsilon,N_0)}(n) \simeq z_\rmp^{(0,\infty)}(n) \simeq z_\rmp$ 
(see \eqref{L1} and \eqref{L2.bis}).
That this is actually the case, and that the estimate $z_\rmp^{(\varepsilon,N_0)}$
obtained from the set of values of $z_\rmp^{(\varepsilon,N_0)}(n)$ within the \emph{range of convergence}
is indeed close to the true value $z_\rmp$, will be confirmed (or refuted) \emph{a posteriori}
by the study of the sum $\hM_{k}^{\,(\rmp;\varepsilon,N_0)}(m)$ against $m$ (see 
\eqref{L6} and Fig. \ref{fig:4}d), and by evaluating the mean error $\delta^{(\varepsilon,N_0)}$ 
defined in \eqref{num1}.

Patterns of the type shown in Fig. \ref{fig:4}b, in which a \emph{range of convergence} (or a plateau) 
is present and has to be localized, are ubiquitous in our analysis; in practice, the plateau is 
selected as the most extended range of consecutive $n$--values for which all the values of the 
plotted quantity lie within a band, whose width $W_\mathrm{p}$ is a small percentage of its 
central value (with only numerical roundoff error, i.e, 
$\varepsilon=\varepsilon_\mathrm{R}$, the bin--width we used is $W_\mathrm{p}=10^{-3}\,\%$ 
of the central value of the band); for instance, referring to Fig. \ref{fig:4}b, 
we have that $\Real z_\rmp^{(\varepsilon,N_0)}(n)$ lies within the range $[6.199969:6.200031]$
for $n\in [10:426]$, i.e., the length of the \emph{range of convergence} is $L_\mathrm{p}=416$ 
(only values up to $n=100$ are shown). Once the \emph{range of convergence} has been located, 
the value of the quantity being considered is chosen to be the sample mean of the values 
belonging to the \emph{range of convergence}, while the sample standard deviation is used 
as an estimate of the uncertainty; in the example of Fig. \ref{fig:4}b we obtain 
$\Real z_\rmp^{(\varepsilon,N_0)} = 6.20000005 \pm 1.5 \times 10^{-7}$,
which is in excellent agreement with the true value $\Real z_\rmp = 6.2$.

A situation very similar to that just discussed for the position of the pole
arises from the analysis of the residue $R_\rmp$ of $f_2(z)$ in $z_\rmp$. 
After that the location $z_\rmp^{(\varepsilon,N_0)}$ of the pole has been computed, use can be made of 
formula \eqref{L4} to obtain $R_\rmp^{(\varepsilon,N_0)}(n)$
(see Fig. \ref{fig:4}c). Even in this case, after some wild oscillations for small values of $n$,
the value of $R_\rmp^{(\varepsilon,N_0)}(n)$ stabilizes on a value $R_\rmp^{(\varepsilon,N_0)}$, which is 
very close to the true value $R_\rmp$ of the residue (see the figure legend for numerical details).

We can now proceed to test the reliability of our results.
The estimates just computed of pole location $z_\rmp^{(\varepsilon,N_0)}$ and residue 
$R_\rmp^{(\varepsilon,N_0)}$ can be inserted into the terms $\hc_{n,k}^{\,(\rmp;\varepsilon,N_0)}$ 
(see \eqref{4.3.bis}) in order to compute the sum $\hM_k^{\,(\rmp;\varepsilon,N_0)}(m)$ given by
formula \eqref{L6}. The plot of the latter, as a function of $m$, is shown in Fig. \ref{fig:4}d 
for various values of $k$; the clear presence of the plateaux validates (qualitatively) the 
correctness of the values of $z_\rmp^{(\varepsilon,N_0)}$ and $R_\rmp^{(\varepsilon,N_0)}$ 
(cf. Fig. \ref{fig:4}a). It is worth observing how the roundoff noise
and the finiteness of $N_0$ limit superiorly the length of the plateaux,
causing, for $m>\hm^{(\rmp)}(\varepsilon,N_0;k)\simeq 230$, 
the divergence of $\hM_k^{\,(\rmp;\varepsilon,N_0)}(m)$ as a power of $m$ (see \eqref{55.9}).

The values of $z_\rmp^{(\varepsilon,N_0)}$ and $R_\rmp^{(\varepsilon,N_0)}$ can be further 
validated by comparing the input samples $(f_2)_N^{(\varepsilon)}$ with the approximate
samples $\hf_k^{\,(\rmp;\varepsilon,N_0)}$, which can be calculated by means of \eqref{55.23}.
These reconstructed samples $\hf_k^{\,(\rmp;\varepsilon,N_0)}$, shown in 
Figs. \ref{fig:5}a and \ref{fig:5}b (filled dots), reproduce extremely well the input 
data $(f_2)_N^{(\varepsilon_\mathrm{R})}$, and the small value of the root mean squared error 
$\delta^{(\varepsilon,N_0)}=1.19\times 10^{-3}$ (see Eq. \eqref{num1}) confirms the 
great accuracy of the values of pole location and residue just recovered.

\begin{figure}[htb]  
\captionsetup{width=0.95\textwidth}
\begin{center}
\leavevmode
\includegraphics[width=11.4cm]{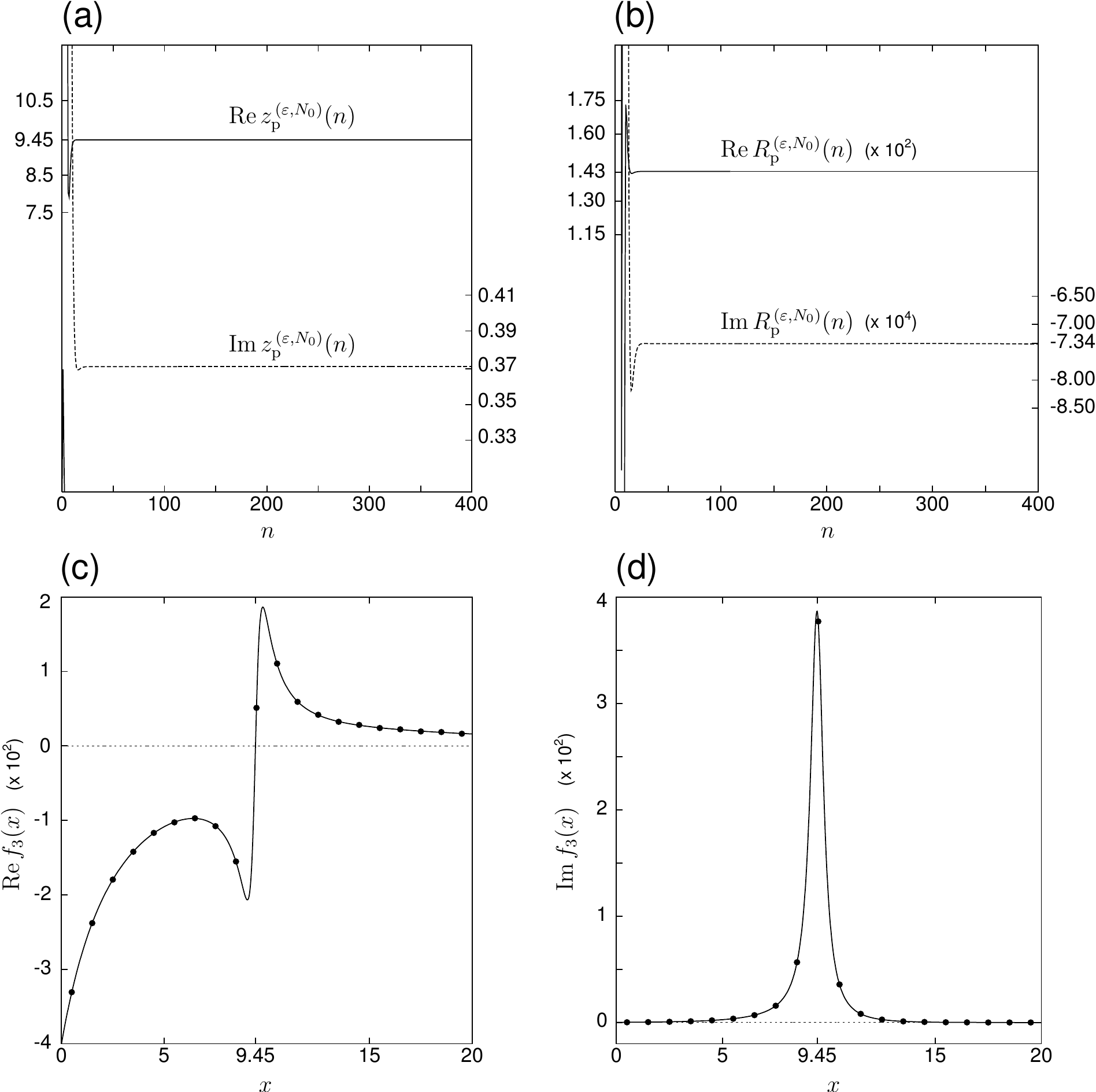}
\caption{\label{fig:6}
\small\textsl{Analysis of a function with one first order pole in $\Real z>0$. $N_0=60$; 
$\varepsilon\equiv\varepsilon_\mathrm{R}$.
$\sds f_3(z)=[\cosh(z-z_\rmp)-1]\{(z+5)^2[\sinh(z-z_\rmp)-(z-z_\rmp)]\}^{-1}$.
$z_\rmp=9.45+0.37\rmi$, $R_\rmp=1.433941\times 10^{-2}-7.348186\times 10^{-4}\rmi$.
{\bf (a)}: Recovery of pole location. $z_\rmp^{(\varepsilon,N_0)}(n)$ vs. $n$ (see \eqref{Q5.2}).
Length of the \emph{ranges of convergence} with $W_\mathrm{p} = 10^{-3}\%$: 
$L_\mathrm{p}^\mathrm{R} = 552$, $L_\mathrm{p}^\mathrm{I} = 489$.
The computed pole position is:
$z_\rmp^{(\varepsilon,N_0)} = (9.4500018 \pm 4.7 \times 10^{-6})+(3.700014 \times 10^{-1} 
\pm 4.8 \times 10^{-6})\rmi$.
{\bf (b)}: Recovery of pole residue. Upper part of the panel: $\Real R_\rmp^{(\varepsilon,N_0)}(n)$ 
(multiplied by $10^2$) vs. $n$ (see \eqref{L4}).
Lower part of the panel: $\Imag R_\rmp^{(\varepsilon,N_0)}(n)$ (multiplied by $10^4$) vs. $n$.
Length of the \emph{ranges of convergence} with $W_\mathrm{p} = 10^{-3}\%$: 
$L_\mathrm{p}^\mathrm{R} = 553$, $L_\mathrm{p}^\mathrm{I} = 524$.
The computed residue is:
$R_\rmp^{(\varepsilon,N_0)} = (1.4339431 \times 10^{-2} \pm 1.4 \times 
10^{-8})-(7.34952 \times 10^{-4} \pm 1.2 \times 10^{-8})\rmi$.
{\bf (c)} and {\bf (d)}: Real and imaginary parts of the reconstructed
samples $\hf_j^{\,(\rmp;\varepsilon,N_0)}$ (filled dots) of $f_3(x)$ (solid line) 
at the half--integers $x_j$, computed by Eq. \eqref{55.23}:
$\hm_\rmt^{(\rmp)}(\varepsilon,N_0;k)=36$ for every $k$; 
$\delta^{(\varepsilon,N_0)} = 1.17\times 10^{-2}$.}
}
\end{center}
\end{figure}

Now that position and residue of the pole are known, we can use the interpolation 
formula \eqref{3.1.bis} (see also the related formula \eqref{3.1}, which holds in 
the case of an infinite number of noiseless input data) to obtain the values of the 
meromorphic function for every $x>0$. An example of such a calculation is given in
Fig. \ref{fig:5}c, where the (approximate) values of the meromorphic function 
$h_{10}(x) \doteq (x-\frac{21}{2})f_2(x)$ (which has a first order pole at 
$z=z_\rmp$ with residue $(z_\rmp-\frac{21}{2})R_\rmp$), have been computed by means of Eq. \eqref{3.1.bis}
(indicated by the filled dots only on a regular grid for better visualization) and 
compared with the true $h_{10}(x)$ (solid line).

\begin{figure}[tb]  
\captionsetup{width=0.95\textwidth}
\begin{center}
\leavevmode
\includegraphics[width=11.4cm]{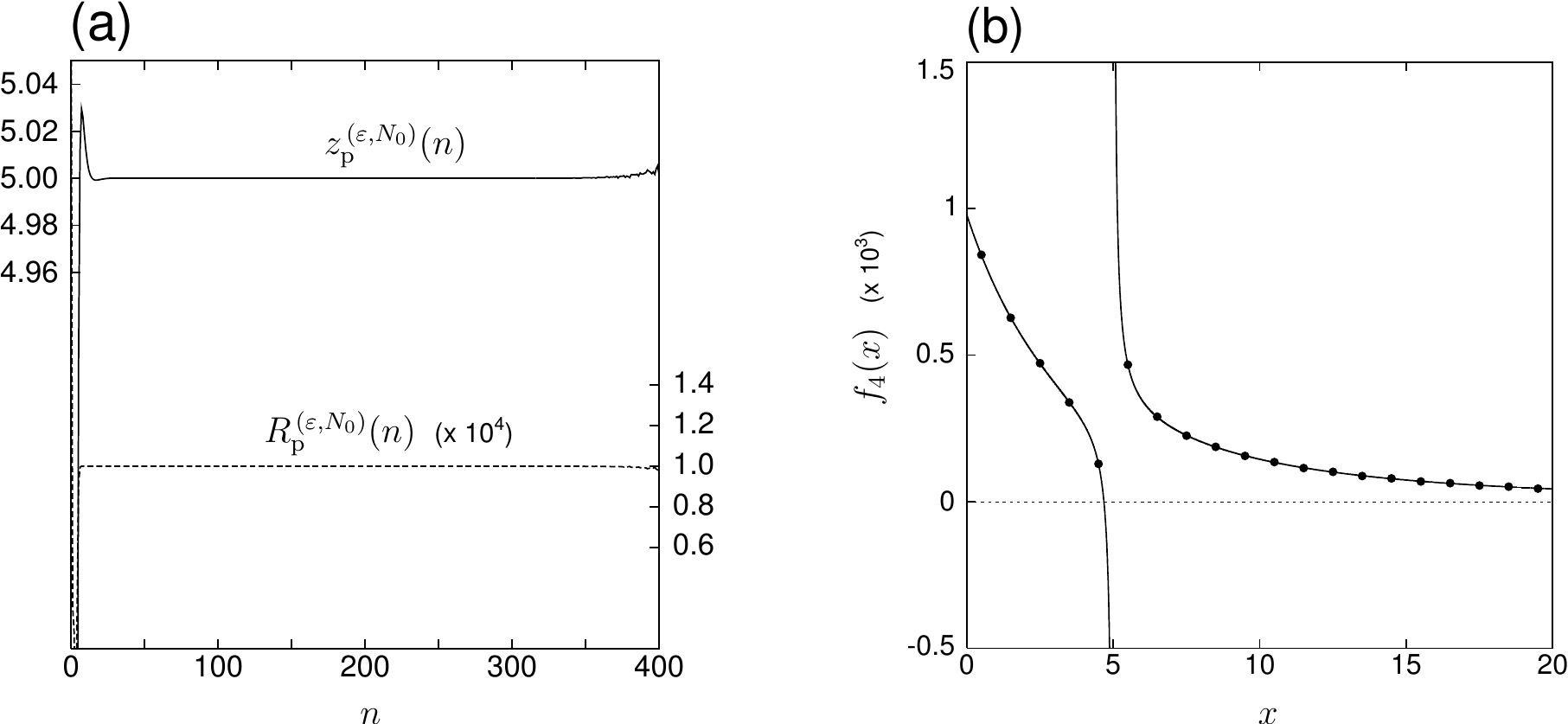}
\caption{\label{fig:7}
\small\textsl{Analysis of a function with a first order pole close to a zero.
$\sds f_4(z)=(z+10)^{-3}+\eta (z-z_\rmp)^{-1}$.
$\eta=10^{-4}$; $z_\rmp=5.0$, $R_\rmp=10^{-4}$; $N_0=60$, 
$\varepsilon\equiv\varepsilon_\mathrm{R}$. $W_\mathrm{p} = 10^{-2}\%$.
The position of the zero is $z_0=4.68343$.
{\bf (a)}: Upper part of the panel: $z_\rmp^{(\varepsilon,N_0)}(n)$ vs. $n$; $L_\mathrm{p} = 318$.
Computed pole position: $z_\rmp^{(\varepsilon,N_0)} = 5.000003 \pm 2.1 \times 10^{-5}$.
Lower part of the panel: $R_\rmp^{(\varepsilon,N_0)}(n)$ (multiplied by $10^4$) vs. $n$; $L_\mathrm{p} = 295$.
Computed residue: $R_\rmp^{(\varepsilon,N_0)} = 9.999946 \times 10^{-5} \pm 6.4 \times 10^{-10}$.
{\bf (b)}: Reconstructed samples $\hf_j^{\,(\rmp;\varepsilon,N_0)}$ (filled dots) of $f_4(x)$ (solid line) 
at the half--integers $x_j$, computed by Eq. \eqref{55.23}:
$\hm_\rmt^{(\rmp)}(\varepsilon,N_0;k)=82$ for every $k$; $\delta^{(\varepsilon,N_0)} = 9.91\times 10^{-3}$.}
}
\end{center}
\end{figure}

Finally, Fig. \ref{fig:5}d illustrates the accuracy of the computation of pole location 
and residue as a function of the number of input data $N_0$. It can be seen that the 
relative error on $z_\rmp^{(\varepsilon,N_0)}$ and $R_\rmp^{(\varepsilon,N_0)}$ rapidly 
decreases when the number of input samples increases, being quite small even in the case 
of a rather limited number of (accurate) input data.

The example discussed so far is typical of an extensive numerical experimentation performed 
with numerous test functions, which, for brevity, are not reported here. The analysis of an 
other example is shown in Fig. \ref{fig:6}, where we study the function 
$f_3(z)=\frac{1}{(z+5)^2}\frac{\cosh(z-z_\rmp)-1}{\sinh(z-z_\rmp)-(z-z_\rmp)}$, which features
a \emph{non--trivial} first order pole at $z=z_\rmp$, which is what is left by the cancellation 
of a second order zero in the numerator with a third order zero in the denominator.
The analysis of this function follows faithfully the one made previously for the function $f_2(z)$;
it is worth remarking in the plots of $z_\rmp^{(\varepsilon,N_0)}(n)$ and $R_\rmp^{(\varepsilon,N_0)}(n)$ 
versus $n$ (see Figs. \ref{fig:6}a,b) the large extent of the interval in which these functions 
are practically equal to their corresponding true values.

A well-known ``defect'' affecting the methods for pole recovery which are based on rational approximants,
notably the Pad\'e approximant method \cite{Baker2}, is the difficulty in handling effectively situations 
when a pole is very close to a zero of the function.
A typical example of such a situation is the case of a function of the type \cite[p. 57]{Baker}:
$f(z)=g(z)+\eta(z-z_\rmp)^{-1}$, where $g(z)$ is analytic: if the real parameter $\eta$ is small,
the pole $z_\rmp$ can get close to a zero $z_0$ of $f(z)$.
Instead, the method we propose is, as expected, practically insensitive to this kind of problem. 
To exemplify our analysis we have taken $g(z)=(z+10)^{-3}$, which is analytic in $\Real z>0$, 
and $z_\rmp=5.0$; the results are summarized in Fig. \ref{fig:7} and Table \ref{tab:1}.

\begin{table}[tb]
\captionsetup{width=0.95\textwidth}
\scriptsize
\caption{\label{tab:1}
\small\textsl{Analysis of a function with a first order pole close to a zero.
$f_4(z)=(z+10)^{-3}+\eta(z-z_\rmp)^{-1}$. $z_\rmp=5.0$; $N_0=60$, $\varepsilon=0$.
The second column gives the location of the real zero of $f_4(z)$ in $\Real z>0$, 
which is close to $z_\rmp$. The fourth column gives the range of $n$ values within 
which $z_\rmp^{(\varepsilon,N_0)}(n)$, computed by Eq. \eqref{Q5.2},
differs from the true pole position by less than $W_\mathrm{p}=0.01\,\%$: i.e., it belongs to
$[4.99975:5.00025]$ (see also Fig. \ref{fig:7}a).}
}
\begin{center}
\leavevmode
\begin{tabular}{ccccc}
\\ \hline \\
$\eta$ & Zero & Zero--pole & Range of $n$ values where & Plateau length $L_\mathrm{p}$ \\
 & location & distance & $z_\rmp^{(\varepsilon,N_0)}(n) \in [4.99975:5.00025]$ & with $W_\mathrm{p}=10^{-2}\,\%$
\\[+2pt] \hline \\
$10^{-4}$ & $4.68343$ & $0.31657$& $n\in[23:364]$ & 342 \\[2pt]
$10^{-5}$ & $4.96637$ & $0.03363$& $n\in[36:327]$ & 292 \\[2pt]
$10^{-6}$ & $4.99662$ & $0.00338$& $n\in[55:296]$ & 242 \\[2pt]
$10^{-7}$ & $4.99966$ & $0.00034$& $n\in[93:257]$ & 165 \\[2pt]
\hline
\end{tabular}
\end{center}
\end{table}

In Fig. \ref{fig:7}a we see, for $\eta=10^{-4}$, that both pole position and residue are 
correctly identified, and that the range over which $z_\rmp^{(\varepsilon,N_0)}(n)$ and 
$R_\rmp^{(\varepsilon,N_0)}(n)$ are nearly constant is rather extended 
($z_\rmp^{(\varepsilon,N_0)}(n)$ is within a band centered around the true value, with 
$W_\mathrm{p} = 10^{-2}\,\%$, for $23 \leqslant n \leqslant 350$).
The accuracy of the values of $z_\rmp^{(\varepsilon,N_0)}$ and $R_\rmp^{(\varepsilon,N_0)}$
obtained from the data shown in Fig. \ref{fig:7}a, is then verified in Fig. \ref{fig:7}b, 
which exhibits the excellent reconstruction of the data samples $\hf_k^{\,(\rmp;\varepsilon,N_0)}$
($\delta^{(\varepsilon,N_0)}\sim 10^{-2}$) in spite of the great closeness between 
pole and zero of $f_4(x)$.
Table \ref{tab:1} shows that these results hold even for much smaller values of $\eta$. 
For instance, with $\eta=10^{-7}$ the zero--pole distance becomes tiny 
($\sim 3.4\times 10^{-4}$) but, nevertheless, the location of the pole 
is recovered correctly (within a $0.01\%$ error) over a very extended range 
of $n$--values ($n_\mathrm{min}(\varepsilon,N_0;k)=74$ and $n_\mathrm{max}(\varepsilon,N_0;k)=274$).

Finally, in Fig. \ref{fig:8} we summarize the analysis in the case of noisy 
input samples $f_N^{(\varepsilon)}$. The input data have been obtained from
the noiseless samples $f_N$ by adding white noise uniformly distributed in the 
interval $[-\varepsilon,\varepsilon]$, in such a way that for all $N$ we have:
$|(f_N-f_N^{(\varepsilon)})/f_N|\leqslant\varepsilon$. For this analysis we used 
the test function: $f_5(z)=\frac{1}{(z+5)^5}\frac{1}{(z-z_\rmp)}$.
Figure \ref{fig:8}a shows $\Real z_\rmp^{(\varepsilon,N_0)}(n)$ as a function of $n$
(see Eq. \eqref{Q5.2}), computed for various values of the noise bound $\varepsilon$.
These plots show how the range of $n$--values over which the value of 
$\Real z_\rmp^{(\varepsilon,N_0)}(n)$
remains constant (and almost equal to the true value $\Real z_\rmp=5.2$) gets 
shorter as the level of noise increases. When $\varepsilon$ is varied we have that
$n_\mathrm{min}(\varepsilon,N_0;k)$ remains nearly constant, whereas (as expected)
$n_\mathrm{max}(\varepsilon,N_0;k)$ changes quite considerably
(e.g., $n_\mathrm{max}(10^{-4},N_0;k)=65$ and $n_\mathrm{max}(10^{-2},N_0;k)=20$).
This behavior exemplifies the discussion of formula \eqref{L2.bis} made in Section
\ref{subse:algorithm:finite}:
the limits in \eqref{L2.bis} cannot be interchanged and, as a consequence,
when $\varepsilon>0$ and $N_0<\infty$ are kept fixed, $z_\rmp^{(\varepsilon,N_0)}(n)$ 
diverges as $n\to\infty$. The value of $n$ where such a divergence ``approximately''
sets in, which we denoted by $n_\mathrm{max}$, depends on $\varepsilon$ and $N_0$:
it is finite when $\varepsilon>0$ and $N_0<\infty$, and it is such that
$\lim_{\substack{{N_0\rightarrow+\infty}\\{\varepsilon\rightarrow 0}}}
n_\mathrm{max}(\varepsilon,N_0) = +\infty$.

\begin{figure}[htb]  
\captionsetup{width=0.95\textwidth}
\begin{center}
\leavevmode
\includegraphics[width=11.4cm]{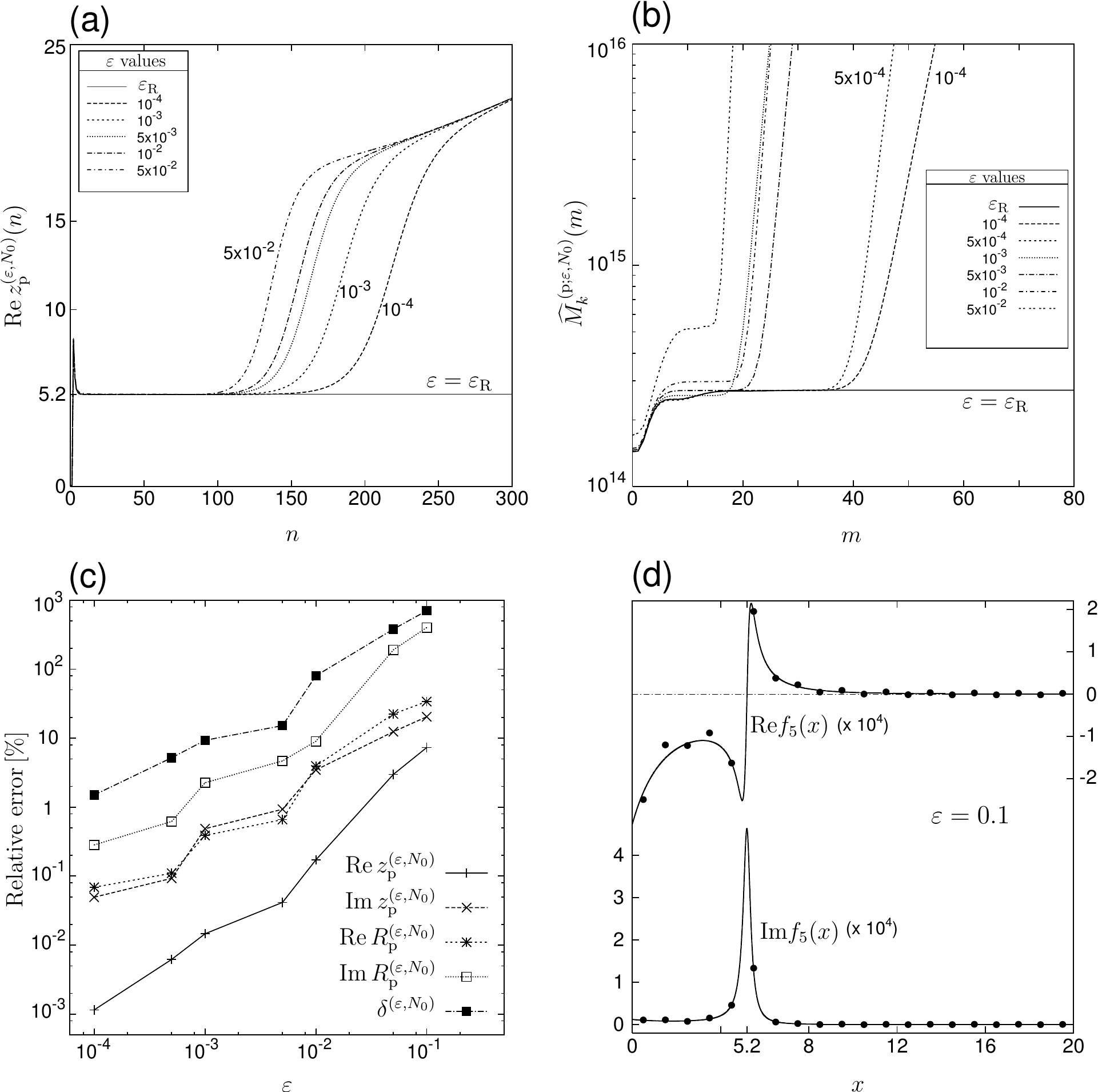}
\caption{\label{fig:8} 
\footnotesize\textsl{Analysis of a function with one first order pole
in $\Real z>0$ with noisy input samples $f_N^{(\varepsilon)}$:
$f_5(z)=(z+5)^{-5}(z-z_\rmp)^{-1}$. $z_\rmp=5.2+0.2\rmi$,
$R_\rmp=9.005168\times 10^{-6}-8.855849\times 10^{-7}\rmi$; $N_0=60$.
The noisy function samples $f_N^{(\varepsilon)}$ have been computed
as $f_N^{(\varepsilon)}=(1+\nu_N^{(\varepsilon)})f_N$, $\nu_N^{(\varepsilon)}$
being random variables uniformly distributed in the interval
$[-\varepsilon,\varepsilon]$, and $f_N$ are the noiseless function
samples.
{\bf (a)}: Function $\Real z_\rmp^{(\varepsilon,N_0)}(n)$ vs. $n$, computed
by Eq. \eqref{Q5.2} with various values of noise bound $\varepsilon$.
Bin--width $W_\mathrm{p}=10^{-2}\%$.
Length of the \emph{range of convergence} and computed value of $\Real z_\rmp^{(\varepsilon,N_0)}$ are:
$\varepsilon=\varepsilon_\mathrm{R}$ (i.e., only numerical roundoff error): 
$L_\mathrm{p}^\mathrm{R} = 399$, $\Real z_\rmp^{(\varepsilon,N_0)} = 5.199999 \pm 1.7 \times 10^{-5}$; 
$\varepsilon=10^{-4}$: $L_\mathrm{p}^\mathrm{R} = 70$, 
$\Real z_\rmp^{(\varepsilon,N_0)} = 5.20006 \pm 1.4 \times 10^{-4}$; 
$\varepsilon=10^{-3}$: $L_\mathrm{p}^\mathrm{R} = 41$, 
$\Real z_\rmp^{(\varepsilon,N_0)} = 5.19984 \pm 1.7 \times 10^{-4}$;
$\varepsilon=5 \times 10^{-3}$: $L_\mathrm{p}^\mathrm{R} = 27$,
$\Real z_\rmp^{(\varepsilon,N_0)} = 5.20168 \pm 6.3 \times 10^{-4}$;
$\varepsilon=10^{-2}$: $L_\mathrm{p}^\mathrm{R} = 24$, 
$\Real z_\rmp^{(\varepsilon,N_0)} = 5.19519 \pm 9.7 \times 10^{-4}$; 
$\varepsilon=5 \times 10^{-2}$: $L_\mathrm{p}^\mathrm{R} = 15$, 
$\Real z_\rmp^{(\varepsilon,N_0)} = 5.1951 \pm 5.4 \times 10^{-3}$.
{\bf (b)}: Plot of $\hM_{k}^{\,(\rmp;\varepsilon,N_0)}(m)$ vs. $m$ (see \eqref{L6}),
computed with different values of $\varepsilon$; $k=13$. 
{\bf (c)}: Relative error in the evaluation of pole location and residue versus the noise bound
$\varepsilon$.
{\bf (d)}: Reconstructed samples $\hf_j^{\,(\rmp;\varepsilon,N_0)}$ (filled dots) of $f_5(x)$ 
(multiplied by $10^4$, and plotted with solid line) at the half--integers $x_j$, computed by Eq. \eqref{55.23}; 
$\varepsilon=10^{-1}$. $\hm_\rmt^{(\rmp)}(\varepsilon,N_0;k)=11$ for every $k$, 
$\delta^{(\varepsilon,N_0)} = 1.105$.}
}
\end{center}
\end{figure}

However, even in the presence of quite noisy input data, this interval remains 
extended enough to allow a reliable recovery of $\Real z_\rmp$ (see the figure legend for numerical details). 
Correspondingly, in panel (b), the sum $\hM_k^{\,(\rmp;\varepsilon,N_0)}(m)$, 
computed with pole location $z_\rmp^{(\varepsilon,N_0)}$ and residue $R_\rmp^{(\varepsilon,N_0)}$
obtained from the analysis of the data in (a), displays the same behavior, 
that is, shorter plateaux for higher levels of noise.
The accuracy achieved in the computation of the pole parameters is given in Fig. \ref{fig:8}c. 
We see that the relative error remains always quite satisfactory: for instance, the real part 
of the pole $\Real z_\rmp$ is computed within nearly $1\%$ when the input data suffer of 
(at most) a $10\%$ error. Also $R_\rmp$ exhibits a similar behavior, though its
estimate always results less accurate than that of $z_\rmp$. 
Finally, in Fig. \ref{fig:8}d an example of reconstruction
of the data samples $\hf_k^{\,(\rmp;\varepsilon,N_0)}$ (with $\varepsilon=0.1$) is shown.

\section{Concluding remarks and extensions}
\label{se:conclusions}

The method we propose is able to compute estimates of location and residue of a single first
order pole of a function meromorphic in $\Real z > 0$ from a finite set of noisy samples taken on
a uniform grid of points spaced one unit apart on the real positive semi--axis.
Moreover, the degree of approximation of these estimates to the true values $z_\rmp$ and $R_\rmp$
can be evaluated by computing the relative mean squared error $\delta^{(\varepsilon,N_0)}$. \\
In conclusion, the following comments and remarks are in order.

(1) A limit of the method we have proposed consists in the fact that the pole parameters 
cannot be determined if the pole is located outside the range of the data set: e.g., 
when the pole lies in the half--plane $\Real z<0$, while the input data 
$\{f_N^{(\varepsilon)}\}_{N=0}^{N_0}$ are given in the half--plane $\Real z>0$,
or if we have a pole at $z_\rmp$ while the data are given only up to 
$z=N_0+\frac{1}{2}<\Real z_\rmp$. In other words, the singularities
should lie well inside a region where the function is partially known through the 
data set $\{f_N^{(\varepsilon)}\}_{N=0}^{N_0}$.

(2) As a typical example of a physical problem which can be properly tackled by our method, 
it can be considered the following one: suppose that, for various values of the angular 
momentum $\ell$, a finite set of partial--waves $a_\ell$, at fixed energy, has been determined 
in a scattering process: i.e., the data set is given by $\{a_\ell\}_{\ell=0}^{L_0}$. By means 
of our method, we can explore whether these partial--waves are the restriction to the integers 
of a function which is analytic in a certain domain: e.g., in the half--plane $\Real\lambda>-\frac{1}{2}$
($\lambda\in\C; \lambda=\ell+\rmi\nu$). If, instead, some resonances are present in the 
collision process, location and residue of the pole which represents these resonances can
be determined. This analysis is particularly relevant in the inverse scattering problem 
at fixed energy, especially in the case of Yukawian potentials, whose partial--wave amplitudes 
are known to satisfy Carlson's bound.

(3) The sampling rate of the input data can be generalized taking as input data the set
$\{f_N^{(\alpha)}\}_{N=0}^\infty$ made of samples $f_N^{(\alpha)} \doteq f[\alpha(N+\ud)]$
($N\in\N$, $\alpha>0$) of a meromorphic function $f(z)$ taken at equidistant
real points, $\alpha$ units apart.

(4) Functions $f(z)$ with a pole of any order higher than unity can be analyzed, 
and an algorithmic procedure capable to return the pole parameters, i.e., 
location and Laurent coefficients, can also be given.
The case of function $f(z)$ with more than one pole in $\Real z>0$ can also be considered, and
even in this case an algorithmic procedure for recovering location and residue
of each pole can be presented. These latter extensions will be the argument 
of a forthcoming paper.

\newpage

\setcounter{equation}{0}
\renewcommand{\theequation}{A.\arabic{equation}}
\setcounter{section}{0}
\renewcommand{\thesection}{A}

\section*{Appendix}
\label{appendix}

\textbf{(A)} For the reader's convenience, some properties of the 
Pollaczek polynomials $P_n(y)$ are here briefly summarized \cite{Bateman}.
\\[+6pt]
The definition of $P_n(y)$ in terms of Gauss hypergeometric series reads:
\beq
P_n(y) \doteq P_n^{(1/2)}(y) = \rmi^n \, 
\null_2F_1\left(-n,\ud+\rmi y;1;2\right).
\label{AA0}
\eeq
The polynomials $P_n(y)$ satisfy the following recurrence relation:
\beq
\begin{split}
& (n+1)P_{n+1}(y)-2yP_n(y)+nP_{n-1}(y)=0, \\
& P_{-1}(y)=0, \quad P_0(y)=1.
\end{split}
\label{AA1}
\eeq
The polynomials $P_n(y)$ are orthonormal with respect to the weight function 
$w(y)=\frac{1}{\pi}|\Gamma(\ud+\rmi y)|^2=(\cosh\pi y)^{-1}$, i.e.:
\beq
\int_{-\infty}^{+\infty}P_m(y)\,P_n(y)\,w(y)\,\rmd y = \delta_{m,n} \qquad (m,n\in\N).
\label{AA2}
\eeq

\skip 0.5cm

\noindent 
\textbf{(B)} In Ref. \cite{DeMicheli1} (see formula (75)) we have proved the following asymptotic 
formula for $P_n(z)$, for large values of $n$ (at fixed $z\in\C$):
\beq
\label{B1}
P_n(z) \staccrel{\sds\sim}{n \gg 1}
{\rmi^n}\left[\frac{(2n)^{-\rmi z-1/2}}{\Gamma(\frac{1}{2}-\rmi z)}+ (-1)^n 
\frac{(2n)^{\rmi z-1/2}}{\Gamma(\frac{1}{2}+\rmi z)}
\right]
\qquad (z \in\C~\mathrm{fixed}).
\eeq
If $\Real z>0$, it follows that
\beq
\label{B3}
P_n(-\rmi z)
\staccrel{\sds\sim}{n \gg 1}
\frac{(-\rmi)^n}{\Gamma\left(\frac{1}{2}+z\right)}(2n)^{z-1/2}
\qquad (z \in\C~\mathrm{fixed},\Real z>0).
\eeq
If in formula \eqref{B3} we put $z=(N+\frac{1}{2})$ ($N\in\N$), we have:
\beq
\label{B33}
P_n\left[-\rmi\left(N+\frac{1}{2}\right)\right]
\staccrel{\sds\sim}{n \gg 1}
\frac{(-\rmi)^n}{N!}\, (2n)^{N}
\qquad (N\in\N).
\eeq

\newpage


\begin{thebibliography}{}

\bibitem{Albino}
Albino, E., Bertero, M., Viano, G. A.:
An analyticity test in the theory of complex angular momentum.
Il Nuovo Cimento Serie X \textbf{32}, 1269--1294 (1964)

\bibitem{Baker}
Baker, G. A. Jr., Graves--Morris, P. R.:
Pad\'e Approximants. Part I: Basic Theory. Encyclopedia of Mathematics and its Applications.
Addison--Wesley, Reading (1981)

\bibitem{Baker2}
Baker, G. A. Jr.:
Defects and the convergence of Pad\'e Approximants.
Acta Appl. Math. \textbf{61}, 37--52 (2000)

\bibitem{Boas}
Boas, R. P.:
Entire Functions.
Academic Press, New York (1954)

\bibitem{DeMicheli1}
De Micheli, E., Viano, G. A.:
On the solution of a class of Cauchy integral equations.
J. Math. Anal. Appl. \textbf{246}, 520--543 (2000)

\bibitem{Engl}
Engl, H. W.:
Regularization methods for the stable solution of inverse problems.
Surveys Math. Indust. \textbf{3}, 71--143 (1993)

\bibitem{Bateman}
Erd\'elyi, A., Magnus, W., Oberhettinger, F., Tricomi, F. (eds.):
Bateman Manuscript Project. Higher Trascendental Functions. Vol. 2.
McGraw--Hill, New York (1953)

\bibitem{Itzykson}
Itzykson, C.:
Group representation in a continuous basis. An example.
J. Math. Phys. \textbf{10}, 1109--1114 (1969)

\bibitem{Montessus}
Montessus de Ballore, R.:
Sur les fractions continues alg\'ebraique.
Bull. Soc. Math. France \textbf{30}, 28--36 (1902)

\bibitem{Pollaczek}
Pollaczek, F.:
Sur une famille de polynomes orthogonaux qui contient les polynomes d'Hermite e de Laguerre 
comme cas limites.
Comptes Rendus de l'Acad\'emie Sci. Paris \textbf{230}, 1563--1565 (1950)

\bibitem{Regge}
Regge, T., Viano, G. A.:
The interpolation problem in the theory of complex angular momentum.
Il Nuovo Cimento Serie X \textbf{25}, 709--722 (1962)

\bibitem{Suetin}
Suetin, S. P.:
Pad\'e approximants and efficient analytic continuation of a power series.
Russian Math. Surveys \textbf{57}, 43--141 (2002)

\bibitem{Szego}
Szeg\"o, G.:
Orthogonal Polynomials.
American Mathematical Society, New York (1959)

\bibitem{Tikhonov}
Tikhonov, A. \& Ars\'enine, V.:
M\'ethodes de R\'esolution des Probl\`emes Mal Pos\'es.
Mir, Moscow (1974)

\bibitem{Walsh}
Walsh, J. L.:
Interpolation and Approximation by Rational Functions in the Complex Domain.
Amer. Math. Soc. Colloquium Publications 20.
American Mathematical Society, New York (1935)

\end{thebibliography}
\end{document}